\newtheorem{theorem}{Theorem}[section]
\newtheorem{lemma}[theorem]{Lemma}
\newtheorem{proposition}[theorem]{Proposition}
\newtheorem{corollary}[theorem]{Corollary}
\newtheorem{definition}[theorem]{Definition}
\newtheorem{remark}[theorem]{Remark}
\newtheorem{example}[theorem]{Example}
\begin{document}

\author[M. Nasernejad   and   J. Toledo]{Mehrdad  Nasernejad$^{1,*}$ and   Jonathan Toledo$^{2}$}
\title[On the copersistence property  of monomial ideals]{On the copersistence property and nearly copersistence property of monomial ideals}
\subjclass[2010]{13B25, 13F20, 13E05, 05C25, 05E40.} 
\keywords {Copersistence property, Nearly copersistence property, Associated primes,  Monomial ideals.}

\thanks{$^*$Corresponding author}


\thanks{E-mail addresses:  m$\_$nasernejad@yahoo.com  and  jonathan.tt@itvalletla.edu.mx}  
\maketitle

\begin{center}
{\it
$^{1}$Univ. Artois, UR 2462, Laboratoire de Math\'{e}matique de  Lens (LML), \\  F-62300 Lens, France \\ and \\
Universit\'e de Caen, GREYC CNRS UMR-6072, Campus II,\\
 Bd Marechal Juin BP 5186, 14032 Caen cedex, France\\
$^2$Tecnol\'o{g}ico Nacional de M\'e{x}ico, Instituto Tecnol\'o{g}ico Del Valle de Etla, Abasolo S/N, Barrio Del Agua Buena, Santiago
Suchilquitongo, 68230, Oaxaca, M\'e{x}ico
}
\end{center}

\maketitle

\begin{abstract}
Let $I$ be an ideal in  a commutative Noetherian ring  $R$.  We say  that $I$  has the {copersistence property} if 
$\mathrm{Ass}_R(R/I^k) \supseteq  \mathrm{Ass}_R(R/I^{k+1})$ for all $k\geq 1$. The first aim of this paper is to explore this concept for 
monomial ideals such that we introduce some classes of monomial ideals satisfying this definition and also studying  this notion under monomial operations.  
In addition, we say that   $I$  has  {nearly copersistence property}  if there exist a positive integer $s$ and a monomial prime ideal  $\mathfrak{p}$ such that 
 $\mathrm{Ass}_R(R/I^{m}) \cup \{\mathfrak{p}\} \supseteq \mathrm{Ass}_R(R/I^{m+1})$ for all $1\leq m\leq s$, and  $\mathrm{Ass}_R(R/I^m) \supseteq \mathrm{Ass}_R(R/I^{m+1})$   for all $m \geq s+1$. The second goal of this paper is to investigate monomial ideals satisfying this concept. 
\end{abstract}


\section{Introduction and Overview}

Broadly speaking, monomial ideals play a fundamental role in studying the connection between commutative algebra and combinatorics. Indeed, the relationship between these two fields allows us to use techniques and methods from commutative algebra to solve combinatorial problems, and vice versa. As a result, commutative algebraists have begun exploring the properties of finite simple (hyper)graphs through monomial ideals.
Generally, there are two well-known correspondences between combinatorics and square-free monomial ideals, both of which arise from identifying square-free monomials with sets of vertices in either a simplicial complex or a hypergraph. In particular, for a finite simple graph, one can define both the edge ideal and the cover ideal. Let $G=(V(G), E(G))$ be a finite simple graph on the vertex set $[n]:=\{1,\ldots,n\}$.  Recall from \cite{Villarreal}  that the {\it edge ideal }  associated to $G$ is the monomial ideal
$$I(G)=(x_ix_j \ : \ \{i,j\}\in E(G)) \subset R=K[x_1,\ldots, x_n] ,$$ and 
the {\it cover ideal} associated to $G$ is the monomial ideal $$J(G)=\bigcap_{\{i,j\}\in E(G)}(x_i,x_j)\subset R=K[x_1,\ldots, x_n].$$
Over the past decade, valuable papers on combinatorial commutative algebra have been published, enabling discussions on the powers of square-free monomial ideals when these are viewed as edge ideals or cover ideals of hypergraphs. In \cite{FHV1, FHV2}, C. Francisco, H. T. H\`a, and A. Van Tuyl provided two methods for determining the chromatic number of a hypergraph via an ideal-membership problem, one involving secant ideals and the other involving powers of the cover ideal. Specifically, they described how the associated primes of the square of the cover ideal of a graph can detect its odd induced cycles. Furthermore, they demonstrated how the associated primes of the $s$-th power of the cover ideal of a hypergraph can be interpreted in terms of the coloring properties of its $s$-th expansion hypergraph. Notably, in the case of graphs, they provided two algebraic characterizations of perfect graphs that are independent of the Strong Perfect Graph Theorem. More information can be found in \cite{FHM}.

From the perspective of commutative algebra,  let  $R$ be a commutative Noetherian ring and $I$ be an ideal of $R$. 
Then a prime ideal $\mathfrak{p}\subset  R$ is an {\it associated prime} of $I$ if there exists an element $f$ in $R$ such that $\mathfrak{p}=(I:_R f)$, where $(I:_R f)=\{r\in R \mid  rf\in I\}$. In addition,  the  {\it set of associated primes} of $I$, denoted by  $\mathrm{Ass}_R(R/I)$, is the set of all prime ideals associated to  $I$. A well-known result of  Brodmann \cite{BR} states  that the sequence $\{\mathrm{Ass}_R(R/I^s)\}_{s \geq 1}$ of associated prime ideals is stationary  for large $s$, that is, there exists a positive integer $s_0$ such that $\mathrm{Ass}_R(R/I^s)=\mathrm{Ass}_R(R/I^{s_0})$ for all $s\geq s_0$. The  minimal such $s_0$ is called the {\it index of stability}   of  $I$ and $\mathrm{Ass}_R(R/I^{s_0})$ is called the {\it stable set }  of associated prime ideals of  $I$, which is denoted by $\mathrm{Ass}^{\infty }(I).$ 

Brodmann's result has been a source of inspiration for many definitions in commutative algebra. One of the most interesting  notions is  the persistence property. 
 We say  $\ell_0$ is the \textit{persistence index} of $I$ if $\ell_0$ is the smallest integer such that $\mathrm{Ass}_R(R/I^{\ell})\subseteq \mathrm{Ass}_R(R/I^{\ell+1})$ for all  $\ell\geq \ell_0$.  We say  an ideal $I$ of $R$  satisfies the {\it persistence property} if  $\ell_0=1$. 
 The concept  of the persistence property of ideals has been studied over the last decade, cf. \cite{HQ,KNT, MMV, N2}.

More recently, Heuberger et al. \cite{HRR} introduced the notion of the copersistence index. In fact,  a positive integer $k_0$ is called  the \textit{copersistence index} of $I$ if  $k_0$  is the smallest integer such that $\mathrm{Ass}_R(R/I^k) \supseteq  \mathrm{Ass}_R(R/I^{k+1})$ for all $k\geq k_0$.  
 Particularly,  we say that  $I$  has the \textit{copersistence property} if $\mathrm{Ass}_R(R/I^k) \supseteq  \mathrm{Ass}_R(R/I^{k+1})$ for all $k\geq 1$, that is, $k_0=1$.  The first target of this paper is to explore this concept for monomial ideals such that we introduce some classes of monomial ideals satisfying this definition. In particular, we  study this notion under monomial operations. These monomial operations enable us to construct new monomial ideals which have the copersistence property based on the monomial ideals which we know have the copersistence property.
 
 In the sequel,  we turn our attention to  the notion of nearly copersistence property. In  2020,  Andrei-Ciobanu \cite{Andrei-Ciobanu} introduced the definition of 
   nearly normally torsion-free monomial ideals.  A monomial ideal $I$ in a polynomial  ring $R=K[x_1, \ldots, x_n]$ over a field $K$ is called {\it nearly normally torsion-free}  if there exist a positive integer $k$ and a monomial prime ideal  $\mathfrak{p}$ such that $\mathrm{Ass}_R(R/I^m)=\mathrm{Min}(I)$ for all $1\leq m\leq k$, and  $\mathrm{Ass}_R(R/I^m) \subseteq \mathrm{Min}(I) \cup \{\mathfrak{p}\}$ for all $m \geq k+1$, more information can be found in
    \cite{NQKR}.     
        Inspired by this definition, we introduce the concept of nearly copersistence property.     
Let  $I$ be a monomial ideal in a polynomial ring  $R=K[x_1, \ldots, x_n]$, where $K$ is a  field. We say   $I$  has  \textit{nearly copersistence property} 
 if there exist a positive integer $s$ and a monomial prime ideal  $\mathfrak{p}$ such that 
 $\mathrm{Ass}_R(R/I^{m}) \cup \{\mathfrak{p}\} \supseteq \mathrm{Ass}_R(R/I^{m+1})$ for all $1\leq m\leq s$, and  $\mathrm{Ass}_R(R/I^m) \supseteq \mathrm{Ass}_R(R/I^{m+1})$   for all $m \geq s+1$. It is obvious that the copersistence property implies nearly copersistence property. 
 The second purpose of  this paper is to investigate monomial ideals that satisfying this concept. Particularly, we show that this type of monomial ideals appears in
 several well-known classes of monomial ideals. 

This paper is organized as follows. In Section \ref{Preliminaries}, for ease of reference, we collect the necessary results and facts   which will be used in the rest of this paper. 
Section \ref{monomial operations-COP}  is concerned with exploring the behavior of the  copersistence property under monomial operations.  In fact, we study the copersistence property under some monomial  operations, such as summation (Proposition \ref{Summation}), monomial multiple (Proposition  \ref{Multiple}), 
expansion (Theorem \ref{Th.Expansion}),   weighting (Theorem \ref{Th.Weighting}), monomial localization (Theorem \ref{Th.Localization}), 
  contraction (Corollary \ref{Cor. contraction}), deletion (Proposition \ref{Pro.Deletion}), and polarization (Examples \ref{Exam.Polarization.1} and \ref{Exam.Polarization.2}), and  by employing  some of them,  we introduce a lot of approaches  for constructing new monomial ideals which have the copersistence  property according to  the monomial ideals which have the copersistence property. In particular, in Theorem \ref{Th.Infinite. COP}, we demonstrate that there exist infinitely many monomial ideals possessing the copersistence property.

Section \ref{NCOP} deals with  nearly copersistence property of monomial ideals. We first show that if  $G=(V(G), E(G))$ is  a finite simple connected graph, and $J(G)$ denotes the cover ideal of $G$, then   $J(G)$ has nearly copersistence property  if and only if $G$ is either a bipartite graph or an almost bipartite graph, see Theorem \ref{NCOP.Almost}. After that, in Proposition \ref{NCOP-to-COP} and Lemma \ref{Lem.NCOP.1}, we seek some connections between the copersistence property and nearly copersistence property, and will present some applications of them in Examples \ref{Exam.NCOP.1} and \ref{Exam.NCOP.2}. Next, 
we prove that, under a certain condition,  a monomial ideal has nearly copersistence property if and only if its monomial multiple  has 
nearly  copersistence property, cf. Proposition \ref{Pro.NCOP.Multiple}.  In particular, we verify that a monomial ideal has nearly copersistence property if and only if its expansion (or weighted) has nearly copersistence property, refer to Lemma \ref{Lem.NCOP.Expansion+Weighting}. Ultimately, 
in Proposition \ref{Pro.Infinite.NCOP}, we  verify  that there exist infinitely many monomial ideals satisfying  nearly copersistence property.

Throughout this paper, the notation $\mathbb{N}$ stands for the positive  integers. Moreover,  $\mathcal{G}(I)$ denotes  the unique minimal set of monomial generators of  a monomial ideal $I\subset R=K[x_1, \ldots, x_n]$. The {\em support} of a monomial $u\in R=K[x_1, \ldots, x_n]$, denoted by $\mathrm{supp}(u)$, is the set of variables that divide $u$. Furthermore,  for a monomial ideal $I$, 
we set $\mathrm{supp}(I)=\bigcup_{u \in \mathcal{G}(I)}\mathrm{supp}(u)$.


\section{Preliminaries} \label{Preliminaries}

In what follows, we collect the necessary results, definitions,  and facts   which will be used in the rest of this paper. We  commence with the following lemma.

\begin{theorem} \label{ANKRQ} (\cite[Theorem 2.1]{ANKRQ}) Let $I$ be a normal monomial ideal in $R=K[x_1, \ldots, x_n]$ and $h\in R$ a monomial.
Assume $v\in R$  is a square-free monomial with  $\mathrm{gcd}(u, v) = 1$ for all $u\in  \mathcal{G}(I) \cup \{h\}$. 
Then  $L := I + vhR$ is normal if and only if $J := I + hR$ is normal.
\end{theorem}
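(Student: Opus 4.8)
The plan is to reduce the statement about $L = I + vhR$ to the statement about $J = I + hR$ by exploiting the fact that $v$ is a square-free monomial whose support is disjoint from the supports of all minimal generators of $I$ together with $h$. The natural tool is the characterization of normality via integral closure of powers: a monomial ideal is normal if and only if $\overline{(\cdot)^k} = (\cdot)^k$ for every $k \geq 1$, and for monomial ideals the integral closure is computed combinatorially from the Newton polyhedron (the convex hull of the exponent vectors in $\mathbb{R}^n_{\geq 0}$). Since $v$ introduces fresh variables not appearing anywhere in $I$ or $h$, the exponent set of $L^k$ splits as a "product" situation over disjoint variable sets, and the Newton polyhedron of $L^k$ should decompose accordingly.

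**Key steps.** First I would fix notation: write $\mathrm{supp}(v) = W$, so $W \cap \mathrm{supp}(u) = \emptyset$ for every $u \in \mathcal{G}(I)\cup\{h\}$, and every generator of $L$ is either a generator of $I$ (not involving $W$) or the monomial $vh$ (involving $W$ only through the square-free factor $v$). Second, I would describe a monomial $x^a \in \overline{L^k}$ concretely: there is an integer $t$ with $0 \le t \le k$ such that $x^a$ is (up to the integral-closure slack) a product of $t$ copies' worth of $vh$ and $(k-t)$ worth of generators of $I$; because $v$ is square-free and the $W$-variables occur nowhere else, the $W$-part of $a$ is forced to be $\le$ the indicator vector of $W$ scaled by $t$, and in fact the membership $x^a \in \overline{L^k}$ is equivalent to: writing $a = a' + a_W$ with $a_W$ supported on $W$, one has $a_W \le t\cdot\mathbf{1}_W$ for the relevant $t$ and the $I$-$h$-part lies in $\overline{(I + hR)^k}$ with the appropriate bookkeeping on how many $h$'s are "used up." The crucial combinatorial claim is that $x^a \in \overline{L^k} \setminus L^k$ can be transported to a witness $x^b \in \overline{J^k}\setminus J^k$ by replacing the $v$-factors with $1$ (equivalently, projecting away the $W$-coordinates), and conversely a failure of normality for $J$ lifts to one for $L$ by multiplying the offending exponent by a suitable power of $v$. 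Third, I would verify both implications are exact — i.e., the projection/lifting sends genuine non-members to genuine non-members — which is where the square-free hypothesis on $v$ is essential (if $v$ were not square-free, powers of $v$ could be "absorbed" via integral closure and the bookkeeping would break).

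**Alternative and the main obstacle.** An alternative, cleaner route is to invoke Theorem \ref{ANKRQ} itself at one remove: note that $L = I + v h R$ and we want to compare with $J = I + hR$; since the hypotheses of \ref{ANKRQ} are visibly symmetric in the roles being compared, one might hope to simply cite it — but of course \ref{ANKRQ} \emph{is} the statement to be proved, so this is circular and I mention it only to flag that the statement is self-contained. The genuine work, and the step I expect to be the main obstacle, is the precise description of $\overline{L^k}$ in terms of $\overline{J^k}$: one must handle the interaction between the integral-closure "rounding" and the disjoint-support splitting, in particular showing that no cancellation or absorption across the two variable blocks can manufacture a new element of the integral closure that is not already detected in the $J$-picture. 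Concretely, the heart is the identity (at the level of exponent monoids / Newton polyhedra)
\[
\overline{L^k} \;=\; \bigcup_{t=0}^{k}\; v^{\,t}\cdot \overline{\bigl(I^{\,k-t}(I + hR)^{\,t}\bigr)}\Big|_{\text{with the }h\text{-count}\le t}
\]
suitably interpreted, and proving that each piece equals the corresponding piece of $L^k$ exactly when the analogous statement holds for $J$. Once this dictionary is established, both directions of the "if and only if" follow by a finite case check on $t$, and the square-freeness of $v$ guarantees the dictionary entries match up on the nose rather than only up to integral closure.
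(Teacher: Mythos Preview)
The statement you are asked to prove is Theorem~\ref{ANKRQ}, which the paper does \emph{not} prove: it appears in the Preliminaries section as a citation of \cite[Theorem~2.1]{ANKRQ}, and no argument for it is given anywhere in the present paper. Consequently there is no ``paper's own proof'' to compare your proposal against.

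As for the proposal itself: your overall strategy---split the variables into $W=\mathrm{supp}(v)$ and its complement, then transport a hypothetical witness of non-normality between $L$ and $J$ by projecting away or reinstating the $W$-coordinates in the Newton polyhedron---is the natural one and is very much in the spirit of the argument in the cited reference. However, what you have written is a plan rather than a proof. You yourself identify the crux as the displayed ``identity''
\[
\overline{L^k}\;=\;\bigcup_{t=0}^{k}\,v^{t}\cdot\overline{\bigl(I^{k-t}(I+hR)^{t}\bigr)}\Big|_{\text{with the }h\text{-count}\le t},
\]
but this is left ``suitably interpreted'' and never established; in particular, the delicate point---that the parameter $t$ forced by the $W$-coordinates of a monomial in $\overline{L^k}$ agrees with a $t$ that works on the $J$-side---is precisely what needs a careful argument, and you have not supplied one. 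The direction ``$J$ normal $\Rightarrow$ $L$ normal'' is where this bookkeeping bites hardest: from $x^{a'}\in J^k$ you get \emph{some} decomposition $x^{a'}\in I^{k-s}h^sR$, but you must show one can choose $s$ compatible with the bound $s\le\min_{j\in W}a_{W,j}$ coming from the requirement $v^s\mid x^{a_W}$. Until that compatibility is verified (which is where both the square-freeness of $v$ and the normality of $I$ are actually used), the argument has a genuine gap.
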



\begin{lemma} (\cite[Lemma 2.11]{FHV2}) \label{FHV2}
  Let  $\mathcal{H}$ be a finite simple hypergraph on 
  $V = \{x_1, \ldots , x_n\}$ with cover ideal 
  $J(\mathcal{H}) \subseteq R=k[x_1, \ldots, x_n]$. 
  Then 
  $$P = (x_{i_1} , \ldots , x_{i_r}) \in \mathrm{Ass}(R/J(\mathcal{H})^d) \Leftrightarrow P = (x_{i_1} , \ldots , x_{i_r}) \in \mathrm{Ass}(k[P]/J(\mathcal{H}_P)^d),$$
where $k[P] =k[x_{i_1} , \ldots , x_{i_r}]$, and $\mathcal{H}_P$ is the induced hypergraph of $\mathcal{H}$ on the vertex set $P = \{x_{i_1} , \ldots , x_{i_r}\} \subseteq V$.  
\end{lemma}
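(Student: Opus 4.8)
The plan is to reduce the statement to two independent facts: a general principle describing how associated primes of a monomial ideal behave under \emph{monomial localization}, and a combinatorial identification of the cover ideal of an induced subhypergraph. Write $W=\{x_{i_1},\dots,x_{i_r}\}$, set $S:=k[P]=k[x_{i_1},\dots,x_{i_r}]$, and let $\pi\colon R\to S$ be the $k$-algebra surjection with $\pi(x_{i_\ell})=x_{i_\ell}$ for $1\le\ell\le r$ and $\pi(x_j)=1$ whenever $x_j\notin W$. For a monomial ideal $I\subseteq R$ put $I(W):=\pi(I)$, the ideal of $S$ generated by $\{\pi(u):u\in\mathcal{G}(I)\}$. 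Because $\pi$ is a ring homomorphism one has $\pi(I^d)=\pi(I)^d$, hence $I^d(W)=\bigl(I(W)\bigr)^d$.

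First I would prove that for an arbitrary monomial ideal $I\subseteq R$, one has $P=(x_{i_1},\dots,x_{i_r})\in\mathrm{Ass}(R/I)$ if and only if the graded maximal ideal $\mathfrak{m}_S=(x_{i_1},\dots,x_{i_r})$ of $S$ lies in $\mathrm{Ass}(S/I(W))$. Since associated primes commute with localization, $P\in\mathrm{Ass}(R/I)$ iff $PR_P\in\mathrm{Ass}(R_P/IR_P)$. Inverting all monomials in the variables outside $W$ identifies $R_P$ with the localization of $k'[x_{i_1},\dots,x_{i_r}]$ at $(x_{i_1},\dots,x_{i_r})$, where $k':=k(x_j:x_j\notin W)$; under this identification $IR_P$ becomes the corresponding localization of $I(W)\,k'[x_{i_1},\dots,x_{i_r}]$, because every monomial generator of $I$ contributes only its $W$-part, the rest being a unit. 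So $P\in\mathrm{Ass}(R/I)$ iff $(x_{i_1},\dots,x_{i_r})\in\mathrm{Ass}\bigl(k'[x_{i_1},\dots,x_{i_r}]/I(W)k'[x_{i_1},\dots,x_{i_r}]\bigr)$, and since irredundant irreducible (hence primary) decompositions of monomial ideals are combinatorial and survive the field extension $k\hookrightarrow k'$, this is equivalent to $\mathfrak{m}_S\in\mathrm{Ass}(S/I(W))$.

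Next I would verify the combinatorial identity $J(\mathcal{H})(W)=J(\mathcal{H}_P)$. Recall that $J(\mathcal{H})$ is generated by the monomials $x_C=\prod_{i\in C}x_i$ with $C$ a vertex cover of $\mathcal{H}$, and that the edges of $\mathcal{H}_P$ are exactly the edges of $\mathcal{H}$ contained in $P$. If $C$ is a vertex cover of $\mathcal{H}$, then $C\cap P$ meets every edge of $\mathcal{H}$ lying inside $P$, so $\pi(x_C)=x_{C\cap P}\in J(\mathcal{H}_P)$; conversely, if $D$ is a vertex cover of $\mathcal{H}_P$, then $D\cup(V(\mathcal{H})\setminus P)$ is a vertex cover of $\mathcal{H}$ whose corresponding generator of $J(\mathcal{H})$ maps under $\pi$ to $x_D$. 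These two observations give $J(\mathcal{H})(W)\subseteq J(\mathcal{H}_P)$ and $J(\mathcal{H}_P)\subseteq J(\mathcal{H})(W)$, hence equality; combined with the first paragraph this yields $J(\mathcal{H})^d(W)=J(\mathcal{H}_P)^d$.

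Putting these together, applying the localization principle to $I=J(\mathcal{H})^d$ gives $P\in\mathrm{Ass}(R/J(\mathcal{H})^d)$ iff $\mathfrak{m}_S\in\mathrm{Ass}\bigl(S/J(\mathcal{H})^d(W)\bigr)=\mathrm{Ass}\bigl(k[P]/J(\mathcal{H}_P)^d\bigr)$, and $\mathfrak{m}_S=(x_{i_1},\dots,x_{i_r})$ is exactly the prime ``$P$'' regarded inside $k[P]$, which is the assertion. I expect the main obstacle to be the localization principle of the second paragraph: one must make the identification of $R_P$ honest and argue carefully that the purely monomial property of being an associated prime is unaffected by enlarging $k$ to $k'$. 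The combinatorial identity is mostly bookkeeping, but it does rely on the convention that $\mathcal{H}_P$ retains precisely the edges of $\mathcal{H}$ that lie inside $P$, without which $J(\mathcal{H})(W)=J(\mathcal{H}_P)$ can fail. A more hands-on alternative would build a witness monomial for $P\in\mathrm{Ass}(R/J(\mathcal{H})^d)$ by multiplying a witness for $\mathfrak{m}_S$ by a sufficiently high power of $\prod_{x_j\notin W}x_j$, and push a witness the other way through $\pi$; but verifying compatibility with $d$-th powers then reproduces the content of the combinatorial step.
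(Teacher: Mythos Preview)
The paper does not supply its own proof of this lemma: it is quoted verbatim from \cite{FHV2} in the Preliminaries section and used later as a black box, so there is nothing to compare against. Your argument is correct and is essentially the standard one. Note, incidentally, that the ``localization principle'' you spend the second paragraph establishing is already recorded in the paper as Theorem~\ref{Localization} (cited from \cite{SN}): for any monomial ideal $I$ and monomial prime $\mathfrak{p}$, $\mathrm{Ass}_{R(\mathfrak{p})}(R(\mathfrak{p})/I(\mathfrak{p}))=\{\mathfrak{q}\in\mathrm{Ass}_R(R/I):\mathfrak{q}\subseteq\mathfrak{p}\}$. Applying this with $\mathfrak{p}=P$ and $I=J(\mathcal{H})^d$, together with your combinatorial identity $J(\mathcal{H})(P)=J(\mathcal{H}_P)$, immediately gives the statement and lets you bypass the field-extension argument entirely.
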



\begin{corollary} \label{Cover. Bipartite. NTF} (\cite[Corollary 2.6]{GRV})
If $G$  is a bipartite graph and $J = I_c(G)$, then $\mathrm{gr}_J(R)$   is reduced.
\end{corollary}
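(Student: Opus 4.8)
The plan is to factor the statement through the normal torsion-freeness of the cover ideal, and then through a polyhedral fact about bipartite graphs.

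\emph{Step 1: reduction to $J^{k}=J^{(k)}$ for all $k$.} For a square-free monomial ideal $I\subset R$ one has $I^{(k)}=\bigcap_{\mathfrak p\in\mathrm{Min}(I)}\mathfrak p^{k}$ and $I^{k}\subseteq I^{(k)}$, with equality exactly when $I^{k}$ has no primary component outside $\mathrm{Min}(I)$; hence $\mathrm{Ass}_R(R/I^{k})=\mathrm{Min}(I)$ for all $k\ge1$ if and only if $I^{k}=I^{(k)}$ for all $k\ge1$. Moreover, in that case $\mathrm{gr}_I(R)=\bigoplus_{k\ge0}I^{k}/I^{k+1}$ is reduced: $\mathrm{gr}_I(R)$ is $\mathbb Z^{n}$-graded with one-dimensional graded components (indexed by the monomials of $R$), so a homogeneous nilpotent would be, up to a scalar, the image $\bar f$ of a monomial $f$ with $f\in I^{k}\setminus I^{k+1}$ and $f^{m}\in I^{km+1}$ for some $k\ge0$ and $m\ge1$; reading off the order at a minimal prime gives $\mathrm{ord}_{\mathfrak p}(f)\ge k+\tfrac1m$, hence $\mathrm{ord}_{\mathfrak p}(f)\ge k+1$ for every $\mathfrak p\in\mathrm{Min}(I)$, so $f\in I^{(k+1)}=I^{k+1}$, a contradiction. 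Applying this to $J=I_c(G)=\bigcap_{\{i,j\}\in E(G)}(x_i,x_j)$, it suffices to prove $J^{k}=J^{(k)}$ for all $k\ge1$ when $G$ is bipartite.

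\emph{Step 2: $J^{k}=J^{(k)}$ for bipartite $G$.} The symbolic power $J^{(k)}$ is generated by the monomials $x^{a}$, $a\in\mathbb Z^{n}_{\ge0}$, satisfying $a_i+a_j\ge k$ for every edge $\{i,j\}$ of $G$, i.e.\ by the lattice points of $k\,Q_G$, where $Q_G=\{x\in\mathbb R^{n}_{\ge0}:x_i+x_j\ge1\text{ for every }\{i,j\}\in E(G)\}$ is the covering polyhedron of $G$; in particular $J=J^{(1)}$ is generated by the lattice points of $Q_G$ (equivalently, by the minimal vertex covers of $G$). Since $G$ is bipartite, its edge--vertex incidence matrix is totally unimodular --- equivalently, by K\"onig's theorem, the minimum vertex cover and the maximum matching of $G$ have equal size, and the same holds for every induced subgraph --- so $Q_G$ is an integral polyhedron and every lattice point of $k\,Q_G$ is a sum of $k$ lattice points of $Q_G$ (the integer decomposition property, valid because the defining system is totally unimodular); this says precisely that $J^{(k)}=J^{k}$. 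Alternatively one may localize: by Lemma \ref{FHV2} a prime $P$ lies in $\mathrm{Ass}_R(R/J^{\ell})$ if and only if the maximal ideal of $k[P]$ lies in $\mathrm{Ass}(k[P]/J(G_P)^{\ell})$ for the induced subgraph $G_P$, which is again bipartite, so the claim reduces to the fact that the irrelevant maximal ideal is never an embedded associated prime of a power of the cover ideal of a bipartite graph on three or more vertices.

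Combining the two steps gives $\mathrm{Ass}_R(R/J^{k})=\mathrm{Min}(J)$ for all $k$, and hence that $\mathrm{gr}_J(R)$ is reduced. The step I expect to be the main obstacle is the polyhedral implication of Step 2 --- passing from ``$G$ bipartite'' to ``$J^{(k)}=J^{k}$ for all $k$'' --- since it is exactly there that one must invoke K\"onig's min--max theorem (equivalently, total unimodularity of the incidence matrix of a bipartite graph) together with the dictionary between integral covering polyhedra, the integer decomposition property, and the equality of ordinary and symbolic powers of a square-free monomial ideal. The remaining ingredients --- the description of $I^{(k)}$ for square-free monomial ideals, the valuation criterion for reducedness of $\mathrm{gr}_I(R)$, and Lemma \ref{FHV2} --- are routine and available in the literature; see \cite{GRV} and the references therein.
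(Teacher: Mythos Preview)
The paper does not supply its own proof of this corollary; it is quoted verbatim from \cite{GRV} in the preliminaries section and used as a black box elsewhere. There is therefore nothing in the present paper to compare your argument against.

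That said, your proof is correct and follows exactly the line of argument in the original source \cite{GRV} and in Villarreal's monograph \cite{V1}: one first observes that, for a square-free monomial ideal, reducedness of $\mathrm{gr}_J(R)$ is equivalent to $J^{k}=J^{(k)}$ for all $k\ge1$ (your Step~1 gives a clean self-contained proof of the implication you need), and then one uses the total unimodularity of the edge--vertex incidence matrix of a bipartite graph to conclude that the covering polyhedron $Q_G$ has the integer decomposition property, which is precisely the statement $J^{(k)}=J^{k}$. One small remark: your ``alternative'' localization argument at the end of Step~2 is not self-contained as written, since reducing to the statement that the maximal ideal is never embedded for a bipartite $G_P$ is circular without an independent argument (e.g.\ induction on $|V(G)|$ together with K\"onig's theorem); the polyhedral route via total unimodularity is the cleaner one, and it is what \cite{GRV} does.
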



  \begin{proposition}\label{NKA.Pro} (\cite[Proposition 3.6]{NKA})
 Suppose that $C_{2n+1}$ is  a cycle graph on the  vertex set $[2n+1]$, $R=K[x_1, \ldots, x_{2n+1}]$ is a  polynomial ring over a field $K$,
 and $\mathfrak{m}$ is the unique homogeneous maximal ideal  of $R$. Then  $$\mathrm{Ass}_R(R/(J(C_{2n+1}))^s)= \mathrm{Ass}_R(R/J(C_{2n+1}))\cup \{\mathfrak{m}\},$$  for all $s\geq 2$. In particular, 
 $$\mathrm{Ass}^\infty(J(C_{2n+1}))=\{(x_i, x_{i+1})~: ~ i=1, \ldots 2n\}\cup\{(x_{2n+1}, x_1)\}\cup \{\mathfrak{m}\}.$$
  \end{proposition}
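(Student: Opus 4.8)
The plan is to establish the two displayed statements by reducing everything to known facts about cover ideals of odd cycles, then identifying the stable set explicitly. First I would recall that for any finite simple graph $G$, the minimal primes of $J(G)$ are precisely the ideals $(x_i,x_j)$ with $\{i,j\}\in E(G)$, so for $C_{2n+1}$ we get $\mathrm{Min}(J(C_{2n+1}))=\{(x_i,x_{i+1}):i=1,\ldots,2n\}\cup\{(x_{2n+1},x_1)\}$, which already identifies $\mathrm{Ass}_R(R/J(C_{2n+1}))$ with the first part of the claimed stable set (cover ideals of graphs are squarefree, hence their associated primes in degree $1$ are exactly the minimal primes). It remains to show that for $s\ge 2$ the only extra associated prime that appears is the maximal ideal $\mathfrak m=(x_1,\ldots,x_{2n+1})$, and that it appears for every such $s$.

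Next I would invoke Lemma \ref{FHV2}: a prime $P=(x_{i_1},\ldots,x_{i_r})$ lies in $\mathrm{Ass}_R(R/J(C_{2n+1})^s)$ if and only if $P\in\mathrm{Ass}(k[P]/J((C_{2n+1})_P)^s)$, where $(C_{2n+1})_P$ is the induced subgraph on $\{x_{i_1},\ldots,x_{i_r}\}$. An induced subgraph of a cycle on a proper subset of vertices is a disjoint union of paths, i.e.\ a forest, and forests are bipartite; by Corollary \ref{Cover. Bipartite. NTF} the cover ideal of a bipartite graph has reduced associated graded ring, so it is normally torsion-free, which forces $\mathrm{Ass}(k[P]/J((C_{2n+1})_P)^s)=\mathrm{Min}(J((C_{2n+1})_P))$ for all $s$. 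Hence no prime $P$ strictly between the two-variable minimal primes and $\mathfrak m$ can be a new associated prime in higher powers: for a proper subset $P$ the induced subgraph is bipartite and contributes nothing beyond its own minimal primes, which are again among the $(x_i,x_{i+1})$. So the only candidate for a genuinely new associated prime in powers $s\ge 2$ is $P=\mathfrak m$ itself, corresponding to the whole odd cycle.

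It then remains to show $\mathfrak m\in\mathrm{Ass}_R(R/J(C_{2n+1})^s)$ precisely for $s\ge 2$ (and never for $s=1$, which is already handled since $J(C_{2n+1})$ is squarefree and $\mathfrak m$ is not one of the minimal primes). The cleanest route is to exhibit an explicit witness: produce a monomial $f$ with $(J(C_{2n+1})^s:f)=\mathfrak m$ for every $s\ge 2$. A natural guess, using the odd-cycle structure, is something like $f=(x_1x_2\cdots x_{2n+1})^{s-1}\cdot w$ for a suitable correction monomial $w$ of low degree reflecting the fact that the fractional chromatic behavior of an odd cycle forces one to "pay" an extra unit; one checks that multiplying $f$ by any variable lands in $J(C_{2n+1})^s$ while $f$ itself does not. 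Alternatively, and probably more robust, I would cite the combinatorial criterion of Francisco--H\`a--Van Tuyl relating $\mathfrak m\in\mathrm{Ass}_R(R/J(G)^s)$ to the non-$s$-colorability / odd-cycle data of $G$: an odd cycle is not $2$-colorable but its $s$-th expansion is appropriately colorable for the relevant range, yielding membership for all $s\ge 2$. Combining the three ingredients — minimal primes give the degree-$1$ associated primes, the bipartite-subgraph argument kills all intermediate candidates in higher powers, and the explicit/combinatorial witness puts $\mathfrak m$ in for all $s\ge 2$ — gives both displayed equalities, with the stable set being the union of the edge-primes and $\mathfrak m$.

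The main obstacle I anticipate is the last step: verifying cleanly that $\mathfrak m$ is associated to $J(C_{2n+1})^s$ for \emph{all} $s\ge 2$ uniformly in $s$, rather than just for $s=2$. The $s=2$ case is the classical observation (associated primes of the square of the cover ideal detect odd cycles), but extending it to all higher powers requires either a carefully chosen family of colon witnesses indexed by $s$ or an appeal to the expansion-hypergraph coloring machinery; getting the degree bookkeeping right in the explicit witness is the delicate part.
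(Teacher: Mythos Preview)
The paper does not prove this proposition; it is listed in the Preliminaries section and attributed to \cite[Proposition~3.6]{NKA} without argument, so there is no proof in the present paper to compare your proposal against.

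That said, your outline is sound and is essentially the standard route to this result. The reduction via Lemma~\ref{FHV2} to induced subgraphs, together with the observation that every proper induced subgraph of $C_{2n+1}$ is a forest (hence bipartite, so its cover ideal is normally torsion-free by Corollary~\ref{Cover. Bipartite. NTF}), correctly rules out any embedded prime strictly between the edge-primes and $\mathfrak m$. Your self-identified obstacle---showing $\mathfrak m\in\mathrm{Ass}_R(R/J(C_{2n+1})^s)$ for \emph{all} $s\ge 2$---is the only nontrivial remaining step. The explicit witness $f=(x_1\cdots x_{2n+1})^{s-1}$ does work (one checks $x_i f\in J(C_{2n+1})^s$ for each $i$ while $f\notin J(C_{2n+1})^s$), and this is how the original reference handles it; alternatively, once $\mathfrak m$ is known to be associated for $s=2$, the persistence property of cover ideals (which follows from normality of $J(C_{2n+1})$) immediately propagates it to all $s\ge 2$.
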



\begin{proposition}\label{Pro.supp} (\cite[Proposition 4.2]{NKRT})
Let  $I$ be   a monomial ideal in $R=K[x_1, \ldots, x_n]$ over a field $K$ with $\mathcal{G}(I)=\{u_1, \ldots, u_m\}$ and $\mathrm{Ass}_R(R/I)=\{\mathfrak{p}_1, \ldots, \mathfrak{p}_s\}$. Then, the following statements hold. 
\begin{itemize}
\item[(i)] If $x_i|u_t$ for some $i$ with  $1\leq i \leq n$, and  
for some $t$ with  $1\leq t \leq m$, then there exists $j$ with  $1\leq j \leq s,$ 
such that $x_i\in \mathfrak{p}_j$. 
\item[(ii)] If $x_i\in \mathfrak{p}_j$ for some $i$ with  $1\leq i \leq n$, and  for some $j$ with   $1\leq j \leq s$, then  there exists $t$ with   $1\leq t \leq m$, such that  $x_i|u_t$. 
\end{itemize} 
Especially, $\bigcup_{j=1}^s \mathrm{supp}(\mathfrak{p}_j)=\bigcup_{t=1}^m \mathrm{supp}(u_t)$.
\end{proposition}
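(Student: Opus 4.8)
The plan is to deduce both inclusions from one elementary fact: in a Noetherian ring, the set of zerodivisors on $R/I$ equals $\bigcup_{\mathfrak{p}\in\mathrm{Ass}_R(R/I)}\mathfrak{p}=\bigcup_{j=1}^{s}\mathfrak{p}_j$. Since $I$ is a monomial ideal, each $\mathfrak{p}_j$ is a monomial prime, so for a single variable $x_i$ the three conditions ``$x_i\in\bigcup_{j}\mathfrak{p}_j$'', ``$x_i\in\mathfrak{p}_j$ for some $j$'', and ``$x_i\in\bigcup_{j}\mathrm{supp}(\mathfrak{p}_j)$'' are all equivalent. With this in hand, (i) and (ii) become short exponent computations, and the final displayed equality is just their conjunction.

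For (i), suppose $x_i\mid u_t$ with $u_t\in\mathcal{G}(I)$. I claim the monomial $u_t/x_i$ does not lie in $I$: if it did, some $u_s\in\mathcal{G}(I)$ would divide $u_t/x_i$, hence divide $u_t$; minimality of $\mathcal{G}(I)$ forces $s=t$, and then $u_t\mid u_t/x_i$, which is impossible on degree grounds. Since $x_i\cdot(u_t/x_i)=u_t\in I$ while $u_t/x_i\notin I$, the variable $x_i$ is a zerodivisor on $R/I$, so $x_i\in\bigcup_{j=1}^{s}\mathfrak{p}_j$, i.e.\ $x_i\in\mathfrak{p}_j$ for some $j$.

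For (ii), suppose $x_i\in\mathfrak{p}_j$ for some $j$; then $x_i$ is a zerodivisor on $R/I$. Because $I$ is monomial, the witness can be taken to be a monomial: there is a monomial $m\notin I$ with $x_i m\in I$. Choose $u_t\in\mathcal{G}(I)$ dividing $x_i m$. If the exponent of $x_i$ in $u_t$ were $0$, then comparing the exponent of each variable in $u_t$ with that in $m$ would give $u_t\mid m$, forcing $m\in I$, a contradiction. Hence $x_i\mid u_t$. Combining (i) and (ii) with the dictionary above yields $\bigcup_{j=1}^{s}\mathrm{supp}(\mathfrak{p}_j)=\bigcup_{t=1}^{m}\mathrm{supp}(u_t)$.

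I do not expect a real obstacle. The two places that need genuine (if minor) care are the verification in (i) that $u_t/x_i\notin I$, which is exactly where minimality of the generating set is used, and the reduction in (ii) to a monomial witness for the zerodivisor, which is where the monomial hypothesis on $I$ enters. As an alternative to the zerodivisor argument in (ii), one could observe that if $x_i\notin\mathrm{supp}(I)$ then $I$ is extended from $K[x_k:k\neq i]$ and appeal to the behaviour of associated primes under adjunction of a polynomial variable; I would prefer the zerodivisor route since it is self-contained.
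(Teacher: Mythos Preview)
Your argument is correct. Both parts are handled cleanly: in (i) the minimality of $\mathcal{G}(I)$ is exactly what guarantees $u_t/x_i\notin I$, and in (ii) the reduction to a monomial witness for the zerodivisor is standard and valid for monomial ideals. The final equality of supports is then immediate.

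As for comparison: the present paper does not prove this proposition at all. It appears in the Preliminaries section with the citation \cite[Proposition 4.2]{NKRT} and no accompanying argument, so there is no in-paper proof to set yours against. Your zerodivisor approach via $\bigcup_{\mathfrak{p}\in\mathrm{Ass}(R/I)}\mathfrak{p}$ being the set of zerodivisors on $R/I$ is a perfectly self-contained and elementary route; the alternative you mention for (ii) (extension from $K[x_k:k\neq i]$ and flatness/faithful flatness of adjoining a variable) also works but is heavier machinery for the same conclusion.
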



\begin{lemma} \label{NTF1} (\cite[Lemma 2.5]{NQH})
Let $I\subset R=K[x_1, \ldots, x_n]$  be a  normally torsion-free square-free monomial ideal,  $\mathfrak{q}$ be  a prime monomial ideal in $R$,  and  $h$ be a square-free monomial in $R$ with  $\mathrm{supp}(h) \cap  (\mathrm{supp}(\mathfrak{q}) \cup \mathrm{supp}(I))=\emptyset$  
 such that  $\bigcap_{\mathfrak{p}\in \mathrm{Ass}(I)}\mathfrak{p} \cap \bigcap_{x_r\in \mathrm{supp}(h)}(\mathfrak{q}, x_r)$   (respectively,  $\bigcap_{\mathfrak{p}\in \mathrm{Ass}(I)}\mathfrak{p} \cap \mathfrak{q}$)  is a minimal primary decomposition of $I \cap (\mathfrak{q}, h)$ (respectively,  $I \cap \mathfrak{q}$).  Let   $I$ and $I \cap \mathfrak{q}$ be normally torsion-free. Then 
$L:=I \cap (\mathfrak{q}, h)$  is normally torsion-free.  
\end{lemma}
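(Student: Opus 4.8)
The plan is to reduce the claim to the coincidence of symbolic and ordinary powers of $L$, and then to a single identity on intersections of ordinary powers. Recall that a square-free monomial ideal is normally torsion-free exactly when $\mathrm{Ass}_R(R/L^{k})=\mathrm{Min}(L)$ for all $k$, equivalently when $L^{k}=L^{(k)}$ for all $k$, where $L^{(k)}=\bigcap_{\mathfrak P\in\mathrm{Min}(L)}\mathfrak P^{k}$ (each such $\mathfrak P$ being generated by variables). So it suffices to prove $L^{k}=L^{(k)}$ for every $k$.

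First I would pin down $\mathrm{Min}(L)$. Since $\mathrm{supp}(h)\cap\mathrm{supp}(\mathfrak q)=\emptyset$ one has $(\mathfrak q,h)=\bigcap_{x_r\in\mathrm{supp}(h)}(\mathfrak q,x_r)$, and $I=\bigcap_{\mathfrak p\in\mathrm{Ass}(I)}\mathfrak p$ because $I$ is square-free; thus the hypothesis that these two intersections form a \emph{minimal} primary decomposition of $L$ means precisely that $\mathrm{Min}(L)=\mathrm{Ass}(I)\sqcup\{(\mathfrak q,x_r):x_r\in\mathrm{supp}(h)\}$ with all components irredundant, and likewise (applying the hypothesis to $I\cap\mathfrak q$) that $\mathrm{Min}(I\cap\mathfrak q)=\mathrm{Ass}(I)\sqcup\{\mathfrak q\}$. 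Hence
$$L^{(k)}=\Bigl(\bigcap_{\mathfrak p\in\mathrm{Ass}(I)}\mathfrak p^{k}\Bigr)\cap\Bigl(\bigcap_{x_r\in\mathrm{supp}(h)}(\mathfrak q,x_r)^{k}\Bigr)=I^{(k)}\cap(\mathfrak q,h)^{(k)}.$$
Now $I$ is normally torsion-free, so $I^{(k)}=I^{k}$; and $(\mathfrak q,h)$ is normally torsion-free too — after renaming it is obtained from a principal square-free monomial ideal by successively adjoining variables not occurring in it, and that operation preserves normal torsion-freeness (alternatively one may deduce this from Corollary \ref{Cover. Bipartite. NTF}). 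Therefore $(\mathfrak q,h)^{(k)}=(\mathfrak q,h)^{k}$, and the problem reduces to proving $(I\cap(\mathfrak q,h))^{k}=I^{k}\cap(\mathfrak q,h)^{k}$ for all $k$.

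To prove this last equality I would split both sides into $h$-layers, using $\mathrm{supp}(h)\cap(\mathrm{supp}(I)\cup\mathrm{supp}(\mathfrak q))=\emptyset$. From $(\mathfrak q,h)^{k}=\sum_{j=0}^{k}h^{j}\mathfrak q^{k-j}$, distributing the intersection over the sum gives $I^{k}\cap(\mathfrak q,h)^{k}=\sum_{j=0}^{k}h^{j}\bigl(I^{k}\cap\mathfrak q^{k-j}\bigr)$; and from $I\cap(\mathfrak q,h)=(I\cap\mathfrak q)+hI$ one gets $(I\cap(\mathfrak q,h))^{k}=\sum_{j=0}^{k}h^{j}I^{j}(I\cap\mathfrak q)^{k-j}$. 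Because $I$ and $I\cap\mathfrak q$ are normally torsion-free and $\mathrm{Min}(I\cap\mathfrak q)=\mathrm{Ass}(I)\sqcup\{\mathfrak q\}$, one has $(I\cap\mathfrak q)^{m}=(I\cap\mathfrak q)^{(m)}=I^{m}\cap\mathfrak q^{m}$; comparing the two layered expressions (the inclusion $\subseteq$ being automatic), everything comes down to the monomial identity
$$I^{a}\cap\mathfrak q^{b}=I^{a-b}(I\cap\mathfrak q)^{b}\qquad(0\le b\le a).$$

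Establishing this identity is the heart of the matter and the step I expect to be the main obstacle; it uses the hypothesis that $I\cap\mathfrak q$ is normally torsion-free in an essential way (without it, the identity can already fail for $a=b=2$). The inclusion $\supseteq$ is clear. For $\subseteq$ I would induct on $a$, isolating the peeling step $I^{a}\cap\mathfrak q^{b}=(I\cap\mathfrak q)(I^{a-1}\cap\mathfrak q^{b-1})$ for $b\ge 1$: given a monomial $w\in I^{a}\cap\mathfrak q^{b}$, one splits off from $w$ one factor lying in $I\cap\mathfrak q$ — a generator of $I$ dividing $w$ whose support already meets $\mathrm{supp}(\mathfrak q)$, or such a generator ``boosted'' by one variable of $\mathrm{supp}(\mathfrak q)$ dividing $w$ — while keeping the cofactor in $I^{a-1}\cap\mathfrak q^{b-1}$. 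The delicate case is when the $\mathfrak q$-order of $w$ is ``trapped'' inside generators of $I$ rather than freely movable; there I would bring in the normality of $I$ (integral closedness of all its powers, a consequence of normal torsion-freeness) together with the structure of $\mathrm{Min}(I\cap\mathfrak q)$. As a fallback I would instead work with associated primes directly through monomial localizations: $L$ is normally torsion-free iff, for every monomial prime $\mathfrak P$, the maximal ideal of $K[x_i:x_i\in\mathfrak P]$ is associated to a power of $L(\mathfrak P)$ only when $\mathfrak P\in\mathrm{Min}(L)$; since $L(\mathfrak P)$ is again of the form $I'\cap(\mathfrak q',h')$ with $I'$ normally torsion-free, this reduces by induction on $|\mathrm{supp}(h)|$ to the case $h=x_r$ a single fresh variable, where $L=(I\cap\mathfrak q)+x_rI$ has the transparent chain structure $L^{k}=\sum_{j=0}^{k}x_r^{j}C_{j}$ with ideals $C_{0}\subseteq C_{1}\subseteq\cdots\subseteq C_{k}$ in the remaining variables.
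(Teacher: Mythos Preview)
This lemma appears in the paper's Preliminaries section solely as a quotation of \cite[Lemma~2.5]{NQH}; the present paper offers no proof of its own, so there is no argument here to compare your attempt against.

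Assessing your proposal on its own terms: the overall architecture is sound. The identification of $\mathrm{Min}(L)$, the reduction to $L^{k}=L^{(k)}=I^{k}\cap(\mathfrak q,h)^{k}$, and the $h$-layer splitting are all correct. (A minor point: your appeal to Corollary~\ref{Cover. Bipartite. NTF} for the normal torsion-freeness of $(\mathfrak q,h)$ is misplaced, since the primes $(\mathfrak q,x_r)$ need not have height~$2$; Theorem~\ref{Th.NTF.Summation} applied to $\mathfrak q+(h)$ is the right tool.) The genuine gap is precisely where you flag it yourself: the identity $I^{a}\cap\mathfrak q^{\,b}=I^{a-b}(I\cap\mathfrak q)^{b}$ is asserted but not established. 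Your peeling sketch does not go through as written. If the chosen generator $u\in\mathcal G(I)$ dividing $w$ has $\mathfrak q$-degree $d\ge 2$, removing $u$ drops the $\mathfrak q$-degree of the cofactor by $d$ rather than by~$1$, and the induction collapses. In the opposite case, ``boosting'' $u$ by a variable $x_s\in\mathfrak q$ dividing $w$ produces $ux_s\in I\cap\mathfrak q$, but there is no reason why $w/(ux_s)$ should still lie in $I^{a-1}$: the factor $x_s$ may sit inside another generator $u_j$ in the factorization $w=u_1\cdots u_a m$ rather than in $m$, so dividing it out can destroy membership in $I^{a-1}$. Your fallback via monomial localization and reduction to $h=x_r$ is more promising, but you stop at the chain description $L^{k}=\sum_{j}x_r^{\,j}C_j$ with $C_0\subseteq\cdots\subseteq C_k$ and never show that this forces $\mathrm{Ass}(R/L^{k})\subseteq\mathrm{Min}(L)$; that implication is exactly where the hypothesis that $I\cap\mathfrak q$ is normally torsion-free must enter, and it is left untouched. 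As it stands, the proposal correctly isolates the crux but does not resolve it.
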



\begin{proposition} \label{Roberts-Normal}  (\cite[Proposition 3.1]{RRV})
Let $I \subseteq  R = K[x_1,  \ldots, x_n]$  be a monomial ideal. If $I^m$ is integrally closed for $m = 1, \ldots, n-1$,  then $I$ is normal.
  \end{proposition}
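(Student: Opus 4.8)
The plan is to pass to the combinatorics of Newton polyhedra and to reduce the statement to a Carath\'eodory-type decomposition in which the number of vertices used is bounded by $n$ rather than by the naive $n+1$. Write $\mathrm{NP}(I)=\mathrm{conv}\{a\in\mathbb{N}^n : x^a\in\mathcal{G}(I)\}+\mathbb{R}^n_{\geq 0}$ for the Newton polyhedron of $I$. I will use the standard facts that, for any monomial ideal $J\subseteq R$, the integral closure $\overline{J}$ is the monomial ideal whose exponent set is $\mathrm{NP}(J)\cap\mathbb{Z}^n$, that $\mathrm{NP}(I^m)=m\cdot\mathrm{NP}(I)$, and that the exponent set of $I^m$ consists of all sums $v_1+\cdots+v_m+c$ with $x^{v_i}\in\mathcal{G}(I)$ and $c\in\mathbb{N}^n$; consequently $\overline{I^m}$ has exponent set $m\cdot\mathrm{NP}(I)\cap\mathbb{Z}^n$, and $I$ is normal precisely when $\overline{I^m}=I^m$ for all $m$. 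Since $I^m\subseteq\overline{I^m}$ always holds, it is enough to prove $\overline{I^m}\subseteq I^m$ for every $m$; for $m\leq n-1$ this is exactly the hypothesis, so I fix $m\geq n$ and a monomial $x^a\in\overline{I^m}$, that is, $a\in\mathbb{Z}^n$ with $a/m\in\mathrm{NP}(I)$.

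The heart of the argument is a decomposition lemma that I would state and prove separately: every $p\in\mathrm{NP}(I)$ admits a representation $p=\sum_{i=1}^{k}\lambda_i v_i+w$ with $x^{v_i}\in\mathcal{G}(I)$, $\lambda_i\geq 0$, $\sum_{i}\lambda_i=1$, $w\in\mathbb{R}^n_{\geq 0}$, and $k\leq n$. To prove it, write $p=p_0+w_0$ with $p_0$ in the polytope $Q:=\mathrm{conv}\{v : x^v\in\mathcal{G}(I)\}$ and $w_0\in\mathbb{R}^n_{\geq 0}$. If $p_0$ lies in a face of $Q$ of dimension at most $n-1$ (in particular whenever $\dim Q<n$), then Carath\'eodory's theorem inside the affine hull of that face writes $p_0$ as a convex combination of at most $n$ vertices of $Q$, all of which are generator exponents. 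If instead $p_0$ lies in the interior of a full-dimensional $Q$, then $p_0\in\mathbb{R}^n_{>0}$, so $t_1:=\max\{t\geq 0 : p_0-t(1,\ldots,1)\in Q\}$ is a positive real, the point $q_1:=p_0-t_1(1,\ldots,1)$ lies on the boundary of $Q$, hence in a facet of dimension $n-1$, and Carath\'eodory applied there represents $q_1$ by at most $n$ vertices; absorbing $t_1(1,\ldots,1)$ into the $\mathbb{R}^n_{\geq 0}$-part completes the decomposition. This is the step I expect to be the main obstacle, and essentially the only one requiring genuine care: the immediate Carath\'eodory bound $n+1$ would leave a remainder of degree up to $n$, just outside the range covered by the hypothesis, and the fix is exactly to spend the free $\mathbb{R}^n_{\geq 0}$-direction to slide an interior point of $Q$ onto its boundary, lowering the dimension.

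Granting the lemma, apply it to $p=a/m$ and scale by $m$: $a=\sum_{i=1}^{k}c_i v_i+w'$ with $c_i:=m\lambda_i\geq 0$, $\sum_i c_i=m$, $w':=mw\in\mathbb{R}^n_{\geq 0}$, and $k\leq n$. Now round down: set $b:=\sum_{i=1}^{k}(c_i-\lfloor c_i\rfloor)v_i+w'$, so that $a=\sum_{i=1}^{k}\lfloor c_i\rfloor v_i+b$; since $a$ and $\sum_i\lfloor c_i\rfloor v_i$ lie in $\mathbb{Z}^n$ while every term of $b$ is nonnegative, $b\in\mathbb{N}^n$ and $x^b$ is a genuine monomial. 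Put $r:=\sum_{i=1}^{k}(c_i-\lfloor c_i\rfloor)=m-\sum_i\lfloor c_i\rfloor$, a nonnegative integer which, being a sum of $k\leq n$ reals each less than $1$, satisfies $r\leq n-1$. If $r\geq 1$ then $b/r=\sum_i\big((c_i-\lfloor c_i\rfloor)/r\big)v_i+w'/r$ lies in $Q+\mathbb{R}^n_{\geq 0}=\mathrm{NP}(I)$, so $x^b\in\overline{I^r}=I^r$ by the hypothesis (applicable since $r\leq n-1$); if $r=0$ then $b=w'\in\mathbb{N}^n$ and $x^b\in R=I^0$. In either case $x^{\sum_i\lfloor c_i\rfloor v_i}$ is a product of $\sum_i\lfloor c_i\rfloor=m-r$ generators of $I$, hence lies in $I^{m-r}$, and therefore $x^a=x^{\sum_i\lfloor c_i\rfloor v_i}\cdot x^b\in I^{m-r}I^r=I^m$. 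Running this for every $m$ gives $\overline{I^m}\subseteq I^m$ throughout, so $I$ is normal. The only routine points left to fill in are the standard dictionary between monomial ideals and their Newton polyhedra recalled at the outset; everything of substance is concentrated in the dimension-lowering trick inside the decomposition lemma.
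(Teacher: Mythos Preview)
The present paper does not give its own proof of this proposition; it is recorded in Section~\ref{Preliminaries} as a citation of \cite[Proposition~3.1]{RRV} and invoked as a black box in Examples~\ref{Exam.NCOP.1} and elsewhere, so there is nothing here to compare your argument against directly.

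That said, your proof is correct and follows the same strategy as the original Reid--Roberts--Vitulli argument: pass to Newton polyhedra via the dictionary $\overline{I^m}\leftrightarrow m\cdot\mathrm{NP}(I)\cap\mathbb{Z}^n$, express any point of $\mathrm{NP}(I)$ as a convex combination of at most $n$ generator exponents plus a vector in $\mathbb{R}^n_{\geq 0}$, and then round the coefficients down to reduce membership in $\overline{I^m}$ to membership in $\overline{I^r}=I^r$ for some integer $r\leq n-1$. The only genuinely nontrivial point is the one you flag, namely beating the naive Carath\'eodory count $n+1$ down to $n$; your device of sliding an interior point of $Q=\mathrm{conv}\{v:x^v\in\mathcal{G}(I)\}$ along the diagonal direction $(1,\ldots,1)$ onto $\partial Q$ is valid (since $Q\subseteq\mathbb{R}^n_{\geq 0}$ forces $\mathrm{int}(Q)\subseteq\mathbb{R}^n_{>0}$, and the displacement is absorbed into the recession cone of $\mathrm{NP}(I)$), and it delivers exactly the needed bound. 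The remaining floor-and-remainder bookkeeping is carried out carefully; in particular your observation that $r=\sum_i(c_i-\lfloor c_i\rfloor)$ is simultaneously an integer and strictly less than $k\leq n$ is the crux.
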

  

\begin{theorem}\label{Th.NTF.Summation}(\cite[Theorem 2.5]{SN})
Let $I$ be  a  monomial ideal  in  $R=K[x_1, \ldots, x_n]$ such that $I=I_1R + I_2R$, where
 $\mathcal{G}(I_1) \subseteq R_1=K[x_1, \ldots, x_m]$ and $\mathcal{G}(I_2) \subseteq R_2=K[x_{m+1}, \ldots, x_n]$ for some  positive integer $m$. If $I_1$ and   $I_2$  are normally torsion-free, then  $I$  is so.
 \end{theorem}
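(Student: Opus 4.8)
The plan is to verify the defining property of a normally torsion-free ideal head-on: I would show that $\mathrm{Ass}_R(R/I^k)=\mathrm{Min}(I)$ for every $k\ge 1$. Since $\mathrm{Min}(I)\subseteq\mathrm{Ass}_R(R/I^k)$ always holds, only the reverse inclusion $\mathrm{Ass}_R(R/I^k)\subseteq\mathrm{Min}(I)$ needs proof. Throughout write $R_1=K[x_1,\dots,x_m]$, $R_2=K[x_{m+1},\dots,x_n]$, so that every monomial $w\in R$ factors uniquely as $w=w_1w_2$ with $w_i\in R_i$, and every monomial prime $\mathfrak p\subset R$ decomposes as $\mathfrak p=\mathfrak p_1+\mathfrak p_2$ with $\mathfrak p_i$ a monomial prime of $R_i$. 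I may assume $I_1$ and $I_2$ are both nonzero proper ideals, the remaining cases (one of them being $0$ or the unit ideal) being immediate.

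First I would record a combinatorial membership test for the powers of $I$. Expanding the binomial gives $I^k=(I_1R+I_2R)^k=\sum_{a+b=k}I_1^aI_2^bR$, and since the two blocks of variables are disjoint one has $w_1w_2\in I_1^aI_2^bR$ precisely when $w_1\in I_1^a$ and $w_2\in I_2^b$. Writing $\alpha(w_1)=\max\{a\ge 0:\ w_1\in I_1^a\}$ and $\beta(w_2)=\max\{b\ge 0:\ w_2\in I_2^b\}$ (these are finite, as $\bigcap_a I_1^a=\bigcap_b I_2^b=0$), it follows that a monomial $w=w_1w_2$ belongs to $I^k$ if and only if $\alpha(w_1)+\beta(w_2)\ge k$.

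Next I would take an arbitrary $\mathfrak p\in\mathrm{Ass}_R(R/I^k)$, choose a monomial $w=w_1w_2$ with $(I^k:_Rw)=\mathfrak p$, and exploit the test. From $w\notin I^k$ we get $\alpha(w_1)+\beta(w_2)\le k-1$. For each variable $x_i$ with $i\in\mathrm{supp}(\mathfrak p_1)$ we have $wx_i\in I^k$, i.e. $\alpha(w_1x_i)+\beta(w_2)\ge k$, hence $\alpha(w_1x_i)\ge k-\beta(w_2)\ge\alpha(w_1)+1=:a_0\ge 1$; thus $w_1x_i\in I_1^{a_0}$ although $w_1\notin I_1^{a_0}$, so $\mathfrak p_1\subseteq (I_1^{a_0}:_{R_1}w_1)\subsetneq R_1$. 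Now pick any $Q_1\in\mathrm{Ass}_{R_1}(R_1/(I_1^{a_0}:_{R_1}w_1))$ (this set is nonempty since the ideal is proper); it contains $(I_1^{a_0}:_{R_1}w_1)$, and using $\mathrm{Ass}_{R_1}(R_1/(J:w_1))\subseteq\mathrm{Ass}_{R_1}(R_1/J)$ together with the normal torsion-freeness of $I_1$ we obtain $\mathfrak p_1\subseteq Q_1\in\mathrm{Ass}_{R_1}(R_1/I_1^{a_0})=\mathrm{Min}(I_1)$. By the symmetric argument on the $R_2$-block (taking any $Q_2\in\mathrm{Min}(I_2)$ if $\mathfrak p_2=0$) there is $Q_2\in\mathrm{Min}(I_2)$ with $\mathfrak p_2\subseteq Q_2$. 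Consequently $\mathfrak p=\mathfrak p_1+\mathfrak p_2\subseteq Q_1+Q_2$, and $Q_1+Q_2$ is a minimal prime of $I=I_1R+I_2R$ by the standard description of minimal primes of a sum in disjoint variables. Finally, $\mathfrak p\supseteq I^k$ forces $\mathfrak p$ to contain some element of $\mathrm{Min}(I^k)=\mathrm{Min}(I)$; being squeezed between two minimal primes of $I$, $\mathfrak p$ is itself minimal over $I$. This yields $\mathrm{Ass}_R(R/I^k)\subseteq\mathrm{Min}(I)$, and the theorem follows.

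I expect the only genuine care needed is in the membership test and its use above — tracking how the exponents $\alpha(w_1),\beta(w_2)$ change under multiplication by a single variable, and disposing of the degenerate cases in which $\mathfrak p$ meets only one block of variables — rather than any conceptual obstacle; the normal torsion-freeness hypotheses enter exactly through the nongrowth of associated primes of powers, $\mathrm{Ass}_{R_i}(R_i/I_i^{a})=\mathrm{Min}(I_i)$. As an alternative route, one could instead invoke the formula $(I_1R+I_2R)^{(k)}=\sum_{a+b=k}I_1^{(a)}I_2^{(b)}R$ for symbolic powers of sums of ideals in disjoint variables: normal torsion-freeness of $I_1$ and $I_2$ collapses the right-hand side to $\sum_{a+b=k}I_1^aI_2^bR=I^k$, so $I^{(k)}=I^k$ for all $k$ and $I$ is normally torsion-free; this is shorter but trades the elementary bookkeeping for a heavier external input.
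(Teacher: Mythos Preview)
This theorem appears in the paper only as a cited preliminary from \cite[Theorem~2.5]{SN}; no proof is given in the present paper, so there is no in-paper argument to compare yours against.

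On its own merits, your argument has a genuine gap relative to the definition used here. The paper defines normally torsion-free by the condition $\mathrm{Ass}_R(R/I^s)\subseteq\mathrm{Ass}_R(R/I)$ for all $s\ge 1$, \emph{without} assuming $\mathrm{Ass}_R(R/I)=\mathrm{Min}(I)$. You instead work with the stronger condition $\mathrm{Ass}_R(R/I^s)=\mathrm{Min}(I)$; this is what you invoke when you write $\mathrm{Ass}_{R_1}(R_1/I_1^{a_0})=\mathrm{Min}(I_1)$ and when you set up the final squeeze. Under the paper's weaker hypothesis you only obtain $Q_1\in\mathrm{Ass}_{R_1}(R_1/I_1)$, so $Q_1+Q_2$ need not be a \emph{minimal} prime of $I$, and the sandwich $P\subseteq\mathfrak p\subseteq Q_1+Q_2$ between two minimal primes collapses. (A concrete witness: $I_1=(x_1^2,x_1x_2)$ is normally torsion-free in the paper's sense, since $\mathrm{Ass}(R_1/I_1^s)=\{(x_1),(x_1,x_2)\}$ for every $s$, yet $\mathrm{Min}(I_1)=\{(x_1)\}$.) Your alternative symbolic-power route has the same defect, as the equivalence ``$I_i$ normally torsion-free $\Leftrightarrow I_i^{(a)}=I_i^a$ for all $a$'' again presupposes the stronger notion.

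A clean fix, using a tool already recorded in the paper, is Lemma~\ref{Andrei-Ciobanu}: any $\mathfrak p\in\mathrm{Ass}_R(R/I^{k})$ decomposes as $\mathfrak p=\mathfrak p_1R+\mathfrak p_2R$ with $\mathfrak p_i\in\mathrm{Ass}_{R_i}(R_i/I_i^{k_i})$ and $k_1+k_2=k+1$; normal torsion-freeness of each $I_i$ then gives $\mathfrak p_i\in\mathrm{Ass}_{R_i}(R_i/I_i)$, and the same lemma with $k=1$ (taking $k_1=k_2=1$) returns $\mathfrak p\in\mathrm{Ass}_R(R/I)$. This is exactly the mechanism behind the paper's Proposition~\ref{Summation} and works under the paper's definition without any minimality detour.
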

 

\begin{theorem} (\cite[Theorem 3.21]{SN}) \label{Th.NTF.Deletion}. 
Let $I$ be a square-free monomial ideal in $R=K[x_1, \ldots, x_n]$, and $1\leq j \leq n$. If $I$ is normally torsion-free, then $I\setminus x_j$  is so.
\end{theorem}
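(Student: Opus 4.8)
Write $R'=K[x_i\mid i\neq j]$, so that $R=R'[x_j]$. Recall that $I\setminus x_j$ is the monomial ideal generated by those elements of $\mathcal{G}(I)$ that are not divisible by $x_j$; since $x_j$ occurs in none of these generators, $I\setminus x_j=I\cap R'$, and by Proposition \ref{Pro.supp} no associated prime of any power of $I\setminus x_j$ contains $x_j$. Hence $I\setminus x_j$ is normally torsion-free over $R$ precisely when it is normally torsion-free over $R'$, and we may argue inside $R'$. The plan is to use the standard characterization: a square-free monomial ideal $L$ is normally torsion-free if and only if $L^k=L^{(k)}$ for every $k\geq 1$, where $L^{(k)}=\bigcap_{\mathfrak{p}\in\mathrm{Min}(L)}\mathfrak{p}^{k}$; indeed $\mathrm{Ass}(R/L)=\mathrm{Min}(L)$ for square-free $L$, and $L^k$ has no embedded prime exactly when it equals the intersection of its minimal primary components, which is $L^{(k)}$.

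The core of the argument consists of the following two equalities, which I would establish for every $k\geq 1$:
\begin{itemize}
\item[(a)] $I^{k}\cap R'=(I\setminus x_j)^{k}$;
\item[(b)] $I^{(k)}\cap R'=(I\setminus x_j)^{(k)}$.
\end{itemize}
Equality (a) is easy: a monomial $m\in R'$ lying in $I^k$ is divisible by a product of $k$ generators of $I$, and since $x_j\nmid m$ none of those generators is divisible by $x_j$, so $m\in(I\setminus x_j)^k$; the reverse inclusion is clear. For (b) I would go through vertex covers. Both $\mathrm{Min}(I)$ and $\mathrm{Min}(I\setminus x_j)$ consist of prime ideals $P_A=(x_i\mid i\in A)$ with $A$ a minimal vertex cover of the corresponding clutter, and for $I\setminus x_j$ any such $A$ misses $j$. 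If $P_D\in\mathrm{Min}(I)$, then $D\setminus\{j\}$ covers every $x_j$-free generator of $I$, hence contains some $C$ with $P_C\in\mathrm{Min}(I\setminus x_j)$ and $C\subseteq D$; conversely, if $P_C\in\mathrm{Min}(I\setminus x_j)$, then $C\cup\{j\}$ covers all of $\mathcal{G}(I)$ (it meets every $x_j$-free generator through $C$ and every $x_j$-divisible generator through $j$), hence contains some $D$ with $P_D\in\mathrm{Min}(I)$ and $D\subseteq C\cup\{j\}$. Feeding these comparisons into the defining intersections $I^{(k)}=\bigcap_{P_D}P_D^{k}$ and $(I\setminus x_j)^{(k)}=\bigcap_{P_C}P_C^{k}$, together with the trivial remark that a monomial of $R'$ lying in $P_D^{k}$ is divisible by a degree-$k$ monomial supported on $D\setminus\{j\}$, yields both inclusions of (b).

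Granting (a) and (b), the theorem is immediate: if $I$ is normally torsion-free then $I^k=I^{(k)}$ for all $k$, so $(I\setminus x_j)^k=I^k\cap R'=I^{(k)}\cap R'=(I\setminus x_j)^{(k)}$ for all $k$, and hence $I\setminus x_j$ is normally torsion-free. I expect the only genuinely delicate part to be (b): one must match the minimal vertex covers of the two clutters correctly and then check, at the level of individual monomials, that intersecting the $k$-th symbolic power of $I$ with the subring $R'$ carves out exactly the $k$-th symbolic power of the deletion. Everything else is formal.
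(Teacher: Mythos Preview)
The paper does not prove this theorem: it is quoted in the Preliminaries section as \cite[Theorem~3.21]{SN} without argument, so there is no ``paper's own proof'' to compare against. That said, your proposal is a correct and self-contained proof, and the strategy you outline---reducing normal torsion-freeness to the equality $L^{k}=L^{(k)}$ and then verifying the two contraction identities (a) and (b)---is exactly the natural approach. Your verification of (a) is fine since $I$ is square-free (so a product of generators divisible by $x_j$ must involve a generator divisible by $x_j$), and your cover-matching argument for (b) is right: given $P_C\in\mathrm{Min}(I\setminus x_j)$ you pass through a minimal $P_D$ with $D\subseteq C\cup\{j\}$, and the observation that a monomial $m\in R'$ lying in $P_D^{k}$ actually lies in $P_{D\setminus\{j\}}^{k}\subseteq P_C^{k}$ gives the needed inclusion; the reverse inclusion uses $C\subseteq D\setminus\{j\}$ for suitable $C$. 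Nothing is missing.

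One small remark: you should handle (or at least mention) the degenerate case where every generator of $I$ is divisible by $x_j$, so that $I\setminus x_j$ is the zero ideal; this is harmless since the zero ideal is vacuously normally torsion-free, but a reader might pause there.
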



\begin{lemma}\label{Ass-Local-Sharp} (\cite[Lemma  9.38]{sharp})
Let $M$  be a module over the commutative Noetherian ring
$R$, and let $S$ be a multiplicatively closed subset of $R$. Then
$$\mathrm{Ass}_{S^{-1}R}(S^{-1}M)=\{\mathfrak{p}S^{-1}R: \mathfrak{p}\in \mathrm{Ass}_R(M) ~~ \mathrm{and} ~~ \mathfrak{p} \cap S =\emptyset \}.$$
\end{lemma}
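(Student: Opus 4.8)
The plan is to prove the two inclusions separately, using throughout that $R$ is Noetherian—so that every prime of $R$ is finitely generated—together with the standard order-preserving bijection between the primes of $S^{-1}R$ and the primes of $R$ disjoint from $S$, given by $Q \mapsto Q \cap R$ and $\mathfrak{p} \mapsto \mathfrak{p}S^{-1}R$.

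First I would establish the inclusion $\supseteq$. Take $\mathfrak{p} \in \mathrm{Ass}_R(M)$ with $\mathfrak{p} \cap S = \emptyset$, and write $\mathfrak{p} = (0 :_R m)$ for some $m \in M$. I claim that $\mathfrak{p}S^{-1}R = (0 :_{S^{-1}R} m/1)$. The inclusion $\subseteq$ is immediate, since $\mathfrak{p}m = 0$ forces $(\mathfrak{p}S^{-1}R)(m/1) = 0$. For $\supseteq$, suppose $(a/s)(m/1) = 0$ in $S^{-1}M$; then $tam = 0$ in $M$ for some $t \in S$, so $ta \in \mathfrak{p}$ and hence $a/s = (ta)/(ts) \in \mathfrak{p}S^{-1}R$. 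Since $\mathfrak{p}\cap S = \emptyset$, the ideal $\mathfrak{p}S^{-1}R$ is prime, and being the annihilator of $m/1$ it lies in $\mathrm{Ass}_{S^{-1}R}(S^{-1}M)$.

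Next I would prove $\subseteq$. Let $Q \in \mathrm{Ass}_{S^{-1}R}(S^{-1}M)$, say $Q = (0 :_{S^{-1}R} x/s)$ for some $x \in M$ and $s \in S$; since $s$ is a unit in $S^{-1}R$ we may replace $x/s$ by $x/1$ and assume $Q = (0 :_{S^{-1}R} x/1)$. Set $\mathfrak{p} = Q \cap R$, a prime of $R$ disjoint from $S$ with $\mathfrak{p}S^{-1}R = Q$, and use Noetherianity to write $\mathfrak{p} = (a_1, \ldots, a_n)$. For each $i$ we have $(a_i/1)(x/1) = 0$ in $S^{-1}M$, so there is $t_i \in S$ with $t_i a_i x = 0$ in $M$; putting $t = t_1 \cdots t_n \in S$ gives $\mathfrak{p}(tx) = 0$, i.e. $\mathfrak{p} \subseteq (0 :_R tx)$. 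Conversely, if $b \in (0 :_R tx)$ then $btx = 0$, hence $(b/1)(x/1) = 0$ in $S^{-1}M$ because $t/1$ is a unit, so $b/1 \in Q$ and therefore $b \in Q \cap R = \mathfrak{p}$. Thus $\mathfrak{p} = (0 :_R tx) \in \mathrm{Ass}_R(M)$, completing the proof.

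The only genuine subtlety, and the step I expect to be the main obstacle, is the interaction between localization and the annihilator: for an arbitrary ring one need not have $(0 :_{S^{-1}R} m/1) = (0 :_R m)S^{-1}R$, and it is precisely the finite generation of the relevant prime ideal—guaranteed by the Noetherian hypothesis—that rescues both inclusions, since it lets one clear the finitely many denominators $t_i$ simultaneously. Everything else is routine bookkeeping with the prime correspondence and the fact that elements of $S$ become units after localization.
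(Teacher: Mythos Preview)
Your proof is correct and is the standard argument for this classical result. Note that the paper does not actually give a proof of this lemma: it is stated in the preliminaries section as a citation of \cite[Lemma 9.38]{sharp}, so there is nothing to compare against beyond observing that your argument is essentially the textbook one.
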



\begin{definition}
  \em{
   An   ideal $I$ in a commutative Noetherian ring $R$  has the {\it strong persistence property}  if $(I^{s+1}:_R I)=I^s$ for all  $s\geq 1$. 
   }
   \end{definition}
   
   \begin{definition}
   \em{
  An  ideal $I$ in a commutative Noetherian ring $R$ is called {\it normally torsion-free} 
 if $\mathrm{Ass}_R(R/I^s)\subseteq \mathrm{Ass}_R(R/I)$   for all $s\geq 1$.
 }
 \end{definition}



 \section{Copersistence property under monomial operations} \label{monomial operations-COP}
 
 In this section,  we explore  the behavior of the copersistence property under monomial operations. In fact, we study the copersistence
property under some monomial operations, such as expansion, weighting, monomial multiple, monomial localization, contraction, deletion, 
and polarization, and by using  some of them, we introduce several  methods for constructing new monomial ideals which have the copersistence property
based on  the monomial ideals which have the copersistence property. 
To accomplish this, we begin with the following  remark. 

 \begin{remark}  \label{Rem.Cop.1}
\em{
Let $I$ be an ideal in a  commutative Noetherian ring $R$ such that    $s_0$ is  the copersistence index  of $I$, this is, 
  $\mathrm{Ass}_R(R/I^s) \supseteq \mathrm{Ass}_R(R/I^{s+1})$ for all $s \geq s_0$. 
Then  one can easily see that    $\mathrm{Ass}_R(R/I^\lambda) \supseteq  \mathrm{Ass}_R(R/I^\theta)$ 
for all $\theta \geq \lambda \geq s_0$.
}
\end{remark}
 
\subsection{Copersistence property  under summation}  $\newline$

 \vspace{.1cm}
 Proposition \ref{Summation} states that, under a certain condition, the summation of two monomial ideals which have the copersistence property has the copersistence property as well. We need first  recall the following lemma. 

\begin{lemma} (\cite[Lemma 2.2]{Andrei-Ciobanu}) \label{Andrei-Ciobanu}
Let $I_1 \subset S_1=K[x_1,\ldots,  x_n]$,  $I_2 \subset S_2= K[y_1, \ldots, y_m]$ be two monomial ideals in disjoint sets of variables. 
 Let  $I = I_1S + I_2S \subset  S = K[x_1, \ldots, x_n, y_1, \ldots,  y_m]$.
Then $\mathfrak{p} \in \mathrm{Ass}_S(I^k)$ if and only if $\mathfrak{p}=\mathfrak{p}_1S + \mathfrak{p}_2S$,  where
 $\mathfrak{p}_1\in \mathrm{Ass}_{S_1}(I^{k_1})$ and  $\mathfrak{p}_2\in \mathrm{Ass}_{S_2}(I^{k_2})$
  for some positive integers $k_1, k_2$ with $k_1 + k_2 = k + 1$.
\end{lemma}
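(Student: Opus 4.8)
The plan is to deduce the Lemma from two more elementary ingredients: a binomial‑type description of the power $I^{k}$ as an intersection, and a formula for the associated primes of a sum of monomial ideals in disjoint sets of variables. (I read $\mathrm{Ass}_{S}(I^{k})$ as $\mathrm{Ass}_{S}(S/I^{k})$ and $\mathrm{Ass}_{S_{i}}(I^{k_{i}})$ as $\mathrm{Ass}_{S_{i}}(S_{i}/I_{i}^{\,k_{i}})$, and I use freely that for a monomial ideal $L\subseteq S$ one has $\mathrm{Ass}_{S}(S/L)=\{(L:_{S}u)\ :\ u\text{ a monomial with }(L:_{S}u)\text{ prime}\}$, all such primes being monomial.)

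First I would prove the identity
$$I^{k}\;=\;\sum_{i+j=k}I_{1}^{\,i}I_{2}^{\,j}S\;=\;\bigcap_{\substack{a+b=k+1\\ a,b\geq 1}}\bigl(I_{1}^{\,a}S+I_{2}^{\,b}S\bigr),$$
the first equality being the ordinary binomial expansion of $(I_{1}S+I_{2}S)^{k}$. For the second, write an arbitrary monomial as $uv$ with $u$ in the $x$‑variables and $v$ in the $y$‑variables, put $\alpha=\mathrm{ord}_{I_{1}}(u)$ and $\beta=\mathrm{ord}_{I_{2}}(v)$ (the largest exponents with $u\in I_{1}^{\alpha}$, $v\in I_{2}^{\beta}$), and note that $uv\in I^{k}$ iff $\alpha+\beta\geq k$, whereas $uv\in I_{1}^{\,a}S+I_{2}^{\,b}S$ iff $\alpha\geq a$ or $\beta\geq b$; a short computation shows that $\alpha+\beta\geq k$ holds exactly when ``$\alpha\geq a$ or $\beta\geq b$'' holds for every $(a,b)$ with $a+b=k+1$, $a,b\geq 1$. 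In particular $S/I^{k}$ embeds into $\bigoplus_{a+b=k+1}S/(I_{1}^{\,a}S+I_{2}^{\,b}S)$, so
$$\mathrm{Ass}_{S}(S/I^{k})\ \subseteq\ \bigcup_{\substack{a+b=k+1\\ a,b\geq 1}}\mathrm{Ass}_{S}\bigl(S/(I_{1}^{\,a}S+I_{2}^{\,b}S)\bigr).$$

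Next I would establish, for monomial ideals $J_{1}\subseteq S_{1}$ and $J_{2}\subseteq S_{2}$ in disjoint variables, that $\mathrm{Ass}_{S}(S/(J_{1}S+J_{2}S))=\{\mathfrak{p}_{1}S+\mathfrak{p}_{2}S\ :\ \mathfrak{p}_{1}\in\mathrm{Ass}_{S_{1}}(S_{1}/J_{1}),\ \mathfrak{p}_{2}\in\mathrm{Ass}_{S_{2}}(S_{2}/J_{2})\}$. Here $S/(J_{1}S+J_{2}S)\cong(S_{1}/J_{1})\otimes_{K}(S_{2}/J_{2})$. For the inclusion ``$\supseteq$'', embeddings $S_{i}/\mathfrak{p}_{i}\hookrightarrow S_{i}/J_{i}$ tensor up over $K$ to an embedding, and since the $\mathfrak{p}_{i}$ are monomial primes the module $(S_{1}/\mathfrak{p}_{1})\otimes_{K}(S_{2}/\mathfrak{p}_{2})=S/(\mathfrak{p}_{1}S+\mathfrak{p}_{2}S)$ is a polynomial ring, hence a domain, so $\mathfrak{p}_{1}S+\mathfrak{p}_{2}S\in\mathrm{Ass}_{S}(S/(\mathfrak{p}_{1}S+\mathfrak{p}_{2}S))\subseteq\mathrm{Ass}_{S}(S/(J_{1}S+J_{2}S))$. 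For ``$\subseteq$'', take primary decompositions $J_{i}=\bigcap_{\ell}Q_{i,\ell}$ and use the elementary monomial identity $(\bigcap_{\ell}Q_{1,\ell})S+(\bigcap_{\ell'}Q_{2,\ell'})S=\bigcap_{\ell,\ell'}(Q_{1,\ell}S+Q_{2,\ell'}S)$ together with the fact (a direct check) that each $Q_{1,\ell}S+Q_{2,\ell'}S$ is $(\sqrt{Q_{1,\ell}}\,S+\sqrt{Q_{2,\ell'}}\,S)$‑primary; this exhibits a primary decomposition of $J_{1}S+J_{2}S$ all of whose radicals have the required form. Combining this with the previous step already yields the forward implication ``$\Rightarrow$'' of the Lemma.

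The converse ``$\Leftarrow$'' is where I expect the real difficulty, since the primary decomposition of $I^{k}$ pieced together above may be redundant: one must check that every candidate prime $\mathfrak{p}_{1}S+\mathfrak{p}_{2}S$ with $\mathfrak{p}_{i}\in\mathrm{Ass}(S_{i}/I_{i}^{\,k_{i}})$ and $k_{1}+k_{2}=k+1$ genuinely occurs. The route I would try is to exhibit a monomial $w$ with $(I^{k}:_{S}w)=\mathfrak{p}_{1}S+\mathfrak{p}_{2}S$: choose $w_{i}\in S_{i}$ with $(I_{i}^{\,k_{i}}:_{S_{i}}w_{i})=\mathfrak{p}_{i}$ and moreover of $I_{i}$‑order exactly $k_{i}-1$ (so $w_{i}\in I_{i}^{\,k_{i}-1}$), and set $w=w_{1}w_{2}$; by the identity of the first step,
$$(I^{k}:_{S}w)\;=\;\bigcap_{\substack{a+b=k+1\\ a,b\geq 1}}\bigl((I_{1}^{\,a}:_{S_{1}}w_{1})S+(I_{2}^{\,b}:_{S_{2}}w_{2})S\bigr),$$
and for every $(a,b)\neq(k_{1},k_{2})$ with $a+b=k+1$ one has $a<k_{1}$ or $b<k_{2}$, hence $w_{1}\in I_{1}^{\,a}$ or $w_{2}\in I_{2}^{\,b}$, so the corresponding term equals $S$ and drops out; only the $(k_{1},k_{2})$‑term survives, and it is $(I_{1}^{\,k_{1}}:w_{1})S+(I_{2}^{\,k_{2}}:w_{2})S=\mathfrak{p}_{1}S+\mathfrak{p}_{2}S$. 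The main obstacle is precisely the sub‑claim that a colon witness of $I_{i}$‑order exactly $k_{i}-1$ exists: any witness $w_{i}'$ with $(I_{i}^{\,k_{i}}:w_{i}')=\mathfrak{p}_{i}$ has $I_{i}$‑order at most $k_{i}-1$, and one must raise its order to $k_{i}-1$ without disturbing the colon — which can be done by multiplying by a suitable monomial $v\notin\mathfrak{p}_{i}$, since then $(I_{i}^{\,k_{i}}:w_{i}'v)=(\mathfrak{p}_{i}:v)=\mathfrak{p}_{i}$, the point being to pick $v$ so that $\mathrm{ord}_{I_{i}}(w_{i}'v)>\mathrm{ord}_{I_{i}}(w_{i}')$ — and verifying that such a $v$ always exists is the delicate technical step. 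A softer way around this: localize at $\mathfrak{p}:=\mathfrak{p}_{1}S+\mathfrak{p}_{2}S$ using Lemma~\ref{Ass-Local-Sharp} and then reduce, via monomial localization (legitimate here by Proposition~\ref{Pro.supp}), to the case in which each $\mathfrak{p}_{i}$ is the graded maximal ideal $\mathfrak{m}_{i}$ of $S_{i}$, so that $\mathfrak{m}_{i}\in\mathrm{Ass}(S_{i}/I_{i}^{\,k_{i}})$ becomes $\mathrm{depth}(S_{i}/I_{i}^{\,k_{i}})=0$; then the depth formula $\mathrm{depth}\,S/(I_{1}S+I_{2}S)^{k}=\min\{\mathrm{depth}\,S_{1}/I_{1}^{\,k_{1}}+\mathrm{depth}\,S_{2}/I_{2}^{\,k_{2}}\ :\ k_{1}+k_{2}=k+1,\ k_{1},k_{2}\geq 1\}$ for powers of sums of ideals in disjoint variables delivers both directions at once.
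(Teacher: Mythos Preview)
The paper does not prove this lemma at all: it is quoted verbatim from \cite[Lemma~2.2]{Andrei-Ciobanu} and used as a black box in the proof of Proposition~\ref{Summation}. There is therefore no in-paper argument to compare your proposal against.

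On the merits of your attempt: the intersection identity $I^{k}=\bigcap_{a+b=k+1}(I_{1}^{\,a}S+I_{2}^{\,b}S)$ and the tensor-product description of $\mathrm{Ass}_{S}(S/(J_{1}S+J_{2}S))$ are both correct, and together they cleanly give the forward implication. For the converse, your Approach~A is the natural one but the gap you flag is real. The required sub-claim is equivalent to $\mathfrak{p}_{i}\in\mathrm{Ass}(I_{i}^{\,k_{i}-1}/I_{i}^{\,k_{i}})$, and this does \emph{not} follow formally from $\mathfrak{p}_{i}\in\mathrm{Ass}(S_{i}/I_{i}^{\,k_{i}})$; the short exact sequence only gives $\mathrm{Ass}(S_{i}/I_{i}^{\,k_{i}})\subseteq\mathrm{Ass}(I_{i}^{\,k_{i}-1}/I_{i}^{\,k_{i}})\cup\mathrm{Ass}(S_{i}/I_{i}^{\,k_{i}-1})$. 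Your proposed fix (multiply by $v\notin\mathfrak{p}_{i}$) is not obviously workable either: since $I_{i}\subseteq\mathfrak{p}_{i}$, every minimal generator of $I_{i}$ is divisible by some variable of $\mathfrak{p}_{i}$, and there is no general reason why multiplying by a monomial supported outside $\mathfrak{p}_{i}$ should raise the $I_{i}$-order. As for Approach~B, the depth formula you invoke (due to H\`a--Trung--Trung and Nguyen--Vu in this form) is a substantially deeper statement than the lemma itself, and its standard proofs proceed via exactly the kind of filtration and associated-prime bookkeeping you are trying to establish, so the reduction risks being circular or at least uneconomical.
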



\begin{proposition} \label{Summation}
Let $I$ be a monomial ideal in $R=K[x_1, \ldots, x_n]$ such that 
$I=I_1R + I_2R$, where
$\mathcal{G}(I_1) \subset R_1=K[x_1, \ldots, x_m]$ and $\mathcal{G}(I_2) \subset R_2=K[x_{m+1}, \ldots, x_n]$ for some $m \geq 1$. 
 If $I_1$ and $I_2$ have the copersistence property,  then $I$ has the copersistence property. 
\end{proposition}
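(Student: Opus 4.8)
The plan is to reduce everything to Lemma \ref{Andrei-Ciobanu}, which completely describes the associated primes of powers of a sum $I = I_1R + I_2R$ in disjoint variables. The key point is that a prime $\mathfrak p$ lies in $\mathrm{Ass}_R(R/I^k)$ precisely when $\mathfrak p = \mathfrak p_1 R + \mathfrak p_2 R$ with $\mathfrak p_i \in \mathrm{Ass}_{R_i}(R_i/I_i^{k_i})$ and $k_1 + k_2 = k+1$. (One should record this at the start, noting the harmless reindexing between the $\mathrm{Ass}_S(I^k)$ notation of the cited lemma and the $\mathrm{Ass}_S(S/I^{k+1})$-style indexing used here; that is, $\mathrm{Ass}_S(I^k) = \mathrm{Ass}_S(S/I^k)$ as sets of primes, so no real discrepancy arises.) So I would first state the claim to prove: for each $k \geq 1$, $\mathrm{Ass}_R(R/I^k) \supseteq \mathrm{Ass}_R(R/I^{k+1})$.

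Next I would take an arbitrary $\mathfrak p \in \mathrm{Ass}_R(R/I^{k+1})$ and, by Lemma \ref{Andrei-Ciobanu}, write $\mathfrak p = \mathfrak p_1 R + \mathfrak p_2 R$ with $\mathfrak p_1 \in \mathrm{Ass}_{R_1}(R_1/I_1^{k_1})$, $\mathfrak p_2 \in \mathrm{Ass}_{R_2}(R_2/I_2^{k_2})$, and $k_1 + k_2 = k + 2$. Since $k_1, k_2 \geq 1$, we have $k_1 \leq k+1$ and $k_2 \leq k+1$. The goal is to produce exponents $k_1', k_2' \geq 1$ with $k_1' + k_2' = k + 1$, $\mathfrak p_1 \in \mathrm{Ass}_{R_1}(R_1/I_1^{k_1'})$, and $\mathfrak p_2 \in \mathrm{Ass}_{R_2}(R_2/I_2^{k_2'})$; then Lemma \ref{Andrei-Ciobanu} applied in the other direction gives $\mathfrak p \in \mathrm{Ass}_R(R/I^k)$, finishing the proof. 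Note $k_1 + k_2 = k+2$ means that decreasing one of the two exponents by $1$ yields a pair summing to $k+1$. Without loss of generality suppose $k_1 \geq 2$ (at least one of $k_1, k_2$ is $\geq 2$ since their sum is $\geq 3$ when $k \geq 1$; the degenerate case $k = 0$ is excluded, and when both equal... in fact if $k_1 = k_2 = 1$ then $k = 0$, impossible). Set $k_1' = k_1 - 1 \geq 1$ and $k_2' = k_2$. Then $k_1' + k_2' = k+1$ and $k_2' \leq k+1$ so $\mathfrak p_2 \in \mathrm{Ass}_{R_2}(R_2/I_2^{k_2'})$ already. The remaining issue is $\mathfrak p_1$.

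The one nontrivial step — and the place where the copersistence hypothesis on $I_1$ is used — is showing $\mathfrak p_1 \in \mathrm{Ass}_{R_1}(R_1/I_1^{k_1 - 1})$ given $\mathfrak p_1 \in \mathrm{Ass}_{R_1}(R_1/I_1^{k_1})$ with $k_1 \geq 2$. This is exactly an instance of the copersistence of $I_1$: copersistence says $\mathrm{Ass}_{R_1}(R_1/I_1^{t}) \supseteq \mathrm{Ass}_{R_1}(R_1/I_1^{t+1})$ for all $t \geq 1$, so taking $t = k_1 - 1 \geq 1$ gives $\mathrm{Ass}_{R_1}(R_1/I_1^{k_1-1}) \supseteq \mathrm{Ass}_{R_1}(R_1/I_1^{k_1}) \ni \mathfrak p_1$, as needed. (If instead it was $k_2$ that we chose to be $\geq 2$, one symmetrically invokes the copersistence of $I_2$.) I expect the main ``obstacle'' is really just bookkeeping: making sure the exponent one decreases stays $\geq 1$ and handling the case distinction on which of $k_1, k_2$ exceeds $1$ cleanly — there is no deep difficulty, since Lemma \ref{Andrei-Ciobanu} does all the heavy lifting and copersistence is used in the most direct possible way. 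I would close by remarking that the same argument shows more: if the copersistence indices of $I_1$ and $I_2$ are $a$ and $b$, then the copersistence index of $I$ is at most $\max\{a, b\} + \text{(something)}$; but for the stated result, $a = b = 1$ suffices and gives copersistence index $1$ for $I$.
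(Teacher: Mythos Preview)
Your proof is correct and follows essentially the same argument as the paper: both apply Lemma~\ref{Andrei-Ciobanu} to decompose a prime in $\mathrm{Ass}_R(R/I^{k+1})$, observe that one of the exponents $k_1,k_2$ must be at least $2$, and then use copersistence of the corresponding $I_i$ to drop that exponent by one and reassemble via the lemma. The only differences are cosmetic (your extra bookkeeping remarks and the closing comment on copersistence indices).
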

\begin{proof}
Suppose that  $I_1$ and $I_2$ have the copersistence property. Take  $\mathfrak{p}\in \mathrm{Ass}_R(R/I^{s+1})$, where $s\geq 1$.  
 It follows from Lemma \ref{Andrei-Ciobanu} that $\mathfrak{p}=\mathfrak{p}_1R + \mathfrak{p}_2R$,  where
 $\mathfrak{p}_1\in \mathrm{Ass}_{R_1}(R_1/I^{k_1})$ and  $\mathfrak{p}_2\in \mathrm{Ass}_{R_2}(R_2/I^{k_2})$
  for some positive integers $k_1$ and $ k_2$ with $k_1 + k_2 = s + 2$. Since $s\geq 1$, we must have  $k_1\geq 2$ or $k_2\geq 2$. 
  Without loss of generality, assume that $k_1 \geq 2$. 
 Because   $I_1$ has  the copersistence property, we get  $\mathfrak{p}_1\in \mathrm{Ass}_{R_1}(R_1/I^{k_1-1}_1)$. 
 Due to $k_1-1+k_2=s+1$, according to  Lemma \ref{Andrei-Ciobanu},  we can deduce that  $\mathfrak{p}\in \mathrm{Ass}_R(R/I^{s})$.
  Therefore, $I$ has the copersistence property, as required.  
\end{proof}


\subsection{Copersistence property under powers and monomial  multiple}$\newline$
 In this subsection, we focus on studying the behavior of the copersistence property  under powers and monomial multiple. 
  Before stating Proposition \ref{Power}, we  recollect the  following definition.

 \begin{definition} (\cite[Definition 11]{RT}) \label{Permutation}
 \em{
  Let $I\subset R=K[x_1, \ldots, x_n]$ be a square-free monomial ideal and   $\sigma$ a permutation on $\mathrm{supp}(I)$.
  Then we define the $\sigma(I)$ as the square-free monomial ideal in $R$ such that 
  $x_{i_1} \cdots x_{i_s}\in \mathcal{G}(I)$ if and only if   $x_{\sigma(i_1)} \cdots x_{\sigma(i_s)}\in \mathcal{G}(\sigma(I))$.  
  }
 \end{definition}

\begin{proposition} \label{Power}
Let $I$ be an ideal in a commutative Noetherian ring $R$. Then the following statements hold.
\begin{itemize}
\item[(i)] There exists a positive integer $s$ such that $I^s$ has the copersistence property. 
\item[(ii)]  If $I$  has the copersistence property, then, for all positive integers $s$, $I^s$
has the copersistence property. 
\item[(iii)] With the notation of Definition \ref{Permutation}, a square-free monomial ideal $I$ has the copersistence property if and only if $\sigma(I)$ 
has the copersistence property. 
 \item[(iv)]  If $I$ is an ideal such that $\mathrm{Ass}_R(R/I)=\mathrm{Min}(I)$, and also has the copersistence property, 
then $\mathrm{Ass}_R(R/I^s)= \mathrm{Ass}_R(R/I)=\mathrm{Min}(I)$ for all $s\geq 1$. In particular, $I$ is normally torsion-free. 
\item[(v)]  If $I$ is a square-free monomial ideal, and also has the copersistence property, then
 $I$ is normal, and hence has  both the strong persistence property and persistence property. 
\end{itemize}
\end{proposition}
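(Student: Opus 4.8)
The plan is to treat the five parts essentially separately, in each case reducing the assertion about $I^s$ (or $\sigma(I)$) to elementary manipulations with the chains $\{\mathrm{Ass}_R(R/I^k)\}_{k\ge 1}$, using Brodmann's theorem \cite{BR} and Remark \ref{Rem.Cop.1}. For (i), let $s_0$ denote the index of stability of $I$ and fix any $s\ge s_0$. Since $(I^s)^k=I^{sk}$ and $sk\ge s_0$ for every $k\ge 1$, all of the sets $\mathrm{Ass}_R(R/(I^s)^k)$ equal $\mathrm{Ass}^{\infty}(I)$; in particular $\mathrm{Ass}_R(R/(I^s)^k)\supseteq\mathrm{Ass}_R(R/(I^s)^{k+1})$ for all $k\ge 1$, so $I^s$ has the copersistence property. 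For (ii), if $I$ has the copersistence property then its copersistence index is $1$, hence by Remark \ref{Rem.Cop.1} we have $\mathrm{Ass}_R(R/I^{\lambda})\supseteq\mathrm{Ass}_R(R/I^{\theta})$ whenever $\theta\ge\lambda\ge 1$; applying this with $\lambda=sk$, $\theta=s(k+1)$ and using $(I^s)^k=I^{sk}$ shows that $I^s$ has the copersistence property for every $s\ge 1$.

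For (iii), extend the permutation $\sigma$ of $\mathrm{supp}(I)$ to a permutation of $\{1,\dots,n\}$ by letting it fix the variables outside $\mathrm{supp}(I)$, and let $\varphi\colon R\to R$ be the induced $K$-algebra automorphism $\varphi(x_i)=x_{\sigma(i)}$. Then $\varphi(I)=\sigma(I)$, hence $\varphi(I^k)=\sigma(I)^k$ for all $k$; since $\varphi$ maps $\mathrm{Ass}_R(R/J)$ bijectively onto $\mathrm{Ass}_R(R/\varphi(J))$ and preserves inclusions of ideals, the copersistence inclusions for $I$ and for $\sigma(I)$ are equivalent, which is exactly the stated equivalence. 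For (iv), I would combine the containments $\mathrm{Min}(I)=\mathrm{Min}(I^s)\subseteq\mathrm{Ass}_R(R/I^s)$, valid for every ideal, and $\mathrm{Ass}_R(R/I^s)\subseteq\mathrm{Ass}_R(R/I)$, which follows from the copersistence property via Remark \ref{Rem.Cop.1}; since $\mathrm{Ass}_R(R/I)=\mathrm{Min}(I)$ by hypothesis, both containments are forced to be equalities, so $\mathrm{Ass}_R(R/I^s)=\mathrm{Ass}_R(R/I)=\mathrm{Min}(I)$ for all $s\ge 1$, and the inclusion $\mathrm{Ass}_R(R/I^s)\subseteq\mathrm{Ass}_R(R/I)$ says precisely that $I$ is normally torsion-free.

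For (v), a square-free monomial ideal is radical, so $\mathrm{Ass}_R(R/I)=\mathrm{Min}(I)$ and part (iv) applies: $\mathrm{Ass}_R(R/I^s)=\mathrm{Min}(I)$ for every $s\ge1$. Thus $I^s$ has no embedded primes and therefore coincides with its $s$-th symbolic power $I^{(s)}=\bigcap_{\mathfrak{p}\in\mathrm{Min}(I)}\mathfrak{p}^{\,s}$ (each $\mathfrak p\in\mathrm{Min}(I)$ being a monomial prime, for which $\mathfrak p^{(s)}=\mathfrak p^{\,s}$). Since each $\mathfrak p^{\,s}$ is integrally closed, so is $I^s$ for every $s$, in particular for $s=1,\dots,n-1$, whence $I$ is normal by Proposition \ref{Roberts-Normal}. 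Finally, a normal monomial ideal satisfies the strong persistence property, i.e., $(I^{s+1}:_R I)=I^s$ for all $s\ge 1$, and the strong persistence property implies the persistence property (given $\mathfrak p=(I^s:_R f)$, one produces a $g$ with $\mathfrak p=(I^{s+1}:_R g)$); both implications are standard and will be cited from the references on the persistence property mentioned in the Introduction.

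I expect part (v) to be the only place with a genuinely non-formal step: the passage from \textit{normally torsion-free} to \textit{normal} relies on the primary-decomposition identity $I^s=\bigcap_{\mathfrak p\in\mathrm{Min}(I)}\mathfrak p^{\,s}$ together with the integral-closedness of powers of monomial primes, and care is needed to quote the chain ``normal $\Rightarrow$ strong persistence $\Rightarrow$ persistence'' with the correct attributions. Parts (i)--(iv) are purely formal once Brodmann's theorem and Remark \ref{Rem.Cop.1} are in hand.
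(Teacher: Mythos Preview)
Your proof is correct and follows essentially the same route as the paper's. The only noteworthy deviations are in (i) and (v). In (i) you take $s$ to be any integer at least the Brodmann stability index, so that all $\mathrm{Ass}_R(R/(I^s)^k)$ equal $\mathrm{Ass}^\infty(I)$; the paper instead takes $s=s_0$ to be the (generally smaller) copersistence index and uses Remark~\ref{Rem.Cop.1} to get $\mathrm{Ass}_R(R/I^{s_0k})\supseteq\mathrm{Ass}_R(R/I^{s_0(k+1)})$ directly. In (v) you unpack the implication ``normally torsion-free square-free $\Rightarrow$ normal'' by arguing $I^s=I^{(s)}=\bigcap_{\mathfrak p\in\mathrm{Min}(I)}\mathfrak p^{\,s}$ is an intersection of integrally closed ideals (making the appeal to Proposition~\ref{Roberts-Normal} unnecessary, since you have shown all powers are integrally closed), whereas the paper simply cites \cite[Theorem~1.4.6]{HH1} for this step and then \cite[Theorem~6.2]{RNA} and \cite[Proposition~2.9]{N1} for the chain normal $\Rightarrow$ strong persistence $\Rightarrow$ persistence.
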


\begin{proof}
(i)  Suppose that   $s_0$ is  the copersistence index  of $I$; this means that   $\mathrm{Ass}_R(R/I^s) \supseteq \mathrm{Ass}_R(R/I^{s+1})$ for all $s \geq s_0$. We want to verify that  $I^{s_0}$ has the copersistence property. Fix $k\geq 1$ and pick $\mathfrak{p}\in \mathrm{Ass}_R(R/(I^{s_0})^{k+1})$. 
  By Remark \ref{Rem.Cop.1}, we have    $\mathrm{Ass}_R(R/I^{s_0k}) \supseteq \mathrm{Ass}_R(R/I^{s_0k+s_0})$. This implies that $\mathfrak{p}\in \mathrm{Ass}_R(R/(I^{s_0})^{k})$, and hence $\mathrm{Ass}_R(R/(I^{s_0})^k) 
\supseteq  \mathrm{Ass}_R(R/(I^{s_0})^{k+1})$ for all $k\geq 1$.  Consequently,  $I^{s_0}$ has the copersistence property, as claimed. 

(ii)  Let $I$  have  the copersistence property.  Fix  $k,s \geq 1$ and choose $\mathfrak{p}\in \mathrm{Ass}_R(R/(I^s)^{k+1})$. 
 By virtue of  Remark \ref{Rem.Cop.1}, we obtain  $\mathrm{Ass}_R(R/I^{sk}) \supseteq  \mathrm{Ass}_R(R/I^{sk+s})$, and so 
  $\mathfrak{p}\in \mathrm{Ass}_R(R/(I^s)^{k})$. Accordingly, we get $I^s$ has the copersistence property.

(iii)  Let  $\mathrm{supp}(I)=\{x_1, \ldots, x_n\}$.  Consider the $K$-algebra homomorphism $\psi : R=K[x_1, \ldots, x_n] \rightarrow R$  given by $\psi(x_i)=x_{\sigma(i)}$ for all $i=1, \ldots, n$. It is not difficult to see that $\psi$ is an automorphism of $R$ with  $\psi(I)=\sigma(I)$. Thus, we obtain   $I$ and $\sigma(I)$  are isomorphic. In particular, this permits us to deduce that $I$ has the copersistence property if and only if $\sigma(I)$ has the copersistence property. 

(iv)   Since $I$ has the copersistence property, this gives that $\mathrm{Ass}_R(R/I^s) \supseteq \mathrm{Ass}_R(R/I^{s+1})$ for all $s \geq 1$.
 In particular, we have $\mathrm{Ass}_R(R/I^s) \subseteq \mathrm{Ass}_R(R/I)$ for all $s\geq 1$.  This implies that $I$ is normally torsion-free. 
  Furthermore,  it is well-known that $\mathrm{Min}(I) \subseteq \mathrm{Ass}_R(R/I^s)$ for all $s\geq 1$. Accordingly, we get 
 $\mathrm{Ass}_R(R/I^s)= \mathrm{Ass}_R(R/I)=\mathrm{Min}(I)$ for all $s\geq 1$.  

(v) As $I$ is a  square-free monomial ideal, we get $\mathrm{Ass}_R(R/I)=\mathrm{Min}(I)$.  Hence, one can deduce from part (iv) that $I$ 
is normally torsion-free. 
On the other hand, \cite[Theorem 1.4.6]{HH1} says that every  normally torsion-free square-free monomial ideal is normal. Moreover, 
\cite[Theorem 6.2]{RNA} states that every normal monomial ideal has the strong persistence property. Finally, in general, the strong persistence property
  implies the persistence property  (for example see  the proof of \cite[Proposition 2.9]{N1}).
\end{proof}


The next proposition says that  a monomial ideal has the copersistence property if and only if its monomial multiple  has the copersistence property.  
To show this claim,  one requires to use the following theorem. 

\begin{theorem} (\cite[Theorem 5.2] {KHN2})\label{5.2KHN}
Let $I$ be a monomial ideal of $R$ with $\mathcal{G}(I) =\{u_1,\ldots,u_m\}$. Also, assume that there exists a monomial $h=x_{j_1}^{b_1}\cdots x_{j_s}^{b_s}$ such that $h \mid  u_i$ for all $i=1,\ldots,m$. Set $J:=(u_1/h,\ldots,u_m/h)$. Then  we have      $\mathrm{Ass}_R(R/I)=\mathrm{Ass}_R(R/J)\cup\{ (x_{j_1}),\ldots,(x_{j_s})\}.$
\end{theorem}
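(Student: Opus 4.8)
The plan is to realise $I$ as the product $hJ$ and to exploit the short exact sequence that multiplication by $h$ induces. Since $h\mid u_i$ for every generator, we have $I=hJ$ with $\mathcal{G}(J)=\{u_1/h,\ldots,u_m/h\}$, and because $R=K[x_1,\ldots,x_n]$ is a domain, $h$ is a nonzerodivisor. First I would check that the assignment $R/J\to R/I$, $\bar r\mapsto \overline{hr}$, is a well-defined injective $R$-homomorphism: well-definedness uses $hJ=I$, and injectivity uses that $h$ is a nonzerodivisor, since $hr\in hJ$ forces $r\in J$. Its image is $hR/I$ (note $I=hJ\subseteq hR$) and the cokernel is $R/hR$, which yields the short exact sequence
$$0\longrightarrow R/J \xrightarrow{\ \cdot h\ } R/I \longrightarrow R/hR \longrightarrow 0.$$

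Next I would invoke the standard behaviour of associated primes along a short exact sequence, namely $\mathrm{Ass}_R(R/J)\subseteq \mathrm{Ass}_R(R/I)\subseteq \mathrm{Ass}_R(R/J)\cup \mathrm{Ass}_R(R/hR)$. Since $(h)=\bigcap_{i=1}^{s}(x_{j_i}^{b_i})$ is an irredundant primary decomposition in pairwise disjoint variables and each $(x_{j_i}^{b_i})$ is $(x_{j_i})$-primary, we have $\mathrm{Ass}_R(R/hR)=\{(x_{j_1}),\ldots,(x_{j_s})\}$. Substituting gives at once
$$\mathrm{Ass}_R(R/J)\subseteq \mathrm{Ass}_R(R/I)\subseteq \mathrm{Ass}_R(R/J)\cup\{(x_{j_1}),\ldots,(x_{j_s})\},$$
which already delivers the inclusion ``$\subseteq$'' of the claimed identity as well as the inclusion $\mathrm{Ass}_R(R/J)\subseteq \mathrm{Ass}_R(R/I)$.

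It remains to prove that each $(x_{j_i})$ genuinely lies in $\mathrm{Ass}_R(R/I)$, and this is the crux: the exact sequence only places these primes in an \emph{upper} bound, not inside $\mathrm{Ass}_R(R/I)$. I would establish membership by localising. Let $S$ be the multiplicative set of all monomials in the variables other than $x_{j_i}$. In $S^{-1}R$ each such variable becomes a unit, so $S^{-1}(h)=(x_{j_i}^{b_i})$ and $S^{-1}J=(x_{j_i}^{d})$, where $d=\min_{1\le \ell\le m}(\text{exponent of }x_{j_i}\text{ in }u_\ell/h)\ge 0$; hence $S^{-1}I=(x_{j_i}^{\,b_i+d})$, a proper $(x_{j_i})$-primary ideal of $S^{-1}R$ because $b_i\ge 1$. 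Consequently $\mathrm{Ass}_{S^{-1}R}(S^{-1}(R/I))=\{(x_{j_i})S^{-1}R\}$ is nonempty, and Lemma \ref{Ass-Local-Sharp} (applied to $M=R/I$) then forces some $\mathfrak{p}\in \mathrm{Ass}_R(R/I)$ with $\mathfrak{p}\cap S=\emptyset$ and $\mathfrak{p}S^{-1}R=(x_{j_i})S^{-1}R$. The order-preserving correspondence between primes of $S^{-1}R$ and primes of $R$ disjoint from $S$ identifies $\mathfrak{p}=(x_{j_i})$. Carrying this out for each $i=1,\ldots,s$ shows $\{(x_{j_1}),\ldots,(x_{j_s})\}\subseteq \mathrm{Ass}_R(R/I)$, and combining with the two inclusions above gives the desired equality. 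I expect this localisation step to be the main obstacle, since one must treat uniformly both the case where $x_{j_i}$ still divides every generator of $J$ and the case where it does not; passing to $S^{-1}R$ collapses exactly these complications into the single computation $b_i+d\ge 1$.
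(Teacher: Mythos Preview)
Your argument is correct. Note, however, that the paper does not supply its own proof of this statement: Theorem~\ref{5.2KHN} is quoted from \cite[Theorem~5.2]{KHN2} and used as a black box, so there is no in-paper proof to compare your approach against.

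That said, your short exact sequence
\[
0\longrightarrow R/J \xrightarrow{\ \cdot h\ } R/I \longrightarrow R/hR \longrightarrow 0
\]
together with the standard containment $\mathrm{Ass}(R/J)\subseteq\mathrm{Ass}(R/I)\subseteq\mathrm{Ass}(R/J)\cup\mathrm{Ass}(R/hR)$ is a clean way to obtain both inclusions at once, and your identification $\mathrm{Ass}_R(R/hR)=\{(x_{j_1}),\ldots,(x_{j_s})\}$ is correct. One simplification is available in the last step: instead of localising, you can observe directly that each $(x_{j_i})$ is a \emph{minimal} prime of $I$. Indeed $I=hJ\subseteq (h)\subseteq (x_{j_i})$, and since $(x_{j_i})$ has height one in the domain $R$, the only prime strictly below it is $(0)$, which does not contain the nonzero ideal $I$. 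Minimal primes are always associated, so $(x_{j_i})\in\mathrm{Ass}_R(R/I)$ follows immediately. Your localisation via Lemma~\ref{Ass-Local-Sharp} reaches the same conclusion and is valid; it is just slightly longer than necessary.
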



\begin{proposition}\label{Multiple}
Let  $I \subset R=K[x_1, \ldots, x_n]$ be a monomial ideal and $h$ be a monomial in $R$. 
 Then  $I$ has the copersistence property if and only if $hI$ has the copersistence property.
 \end{proposition}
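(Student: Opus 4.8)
The plan is to use Theorem~\ref{5.2KHN} to transfer the computation of associated primes of powers of $hI$ to those of powers of $I$, since $(hI)^k = h^k I^k$ and $h^k$ divides every generator of $(hI)^k$. First I would write $h = x_{j_1}^{b_1}\cdots x_{j_s}^{b_s}$ and set $T := \{(x_{j_1}), \ldots, (x_{j_s})\}$, the set of height-one primes coming from the support of $h$. Applying Theorem~\ref{5.2KHN} to the ideal $(hI)^k = h^k I^k$ with the monomial $h^k$ (which divides all generators of $h^k I^k$, and whose support is exactly $\mathrm{supp}(h)$), we obtain
\[
\mathrm{Ass}_R\bigl(R/(hI)^k\bigr) = \mathrm{Ass}_R(R/I^k) \cup T \qquad \text{for every } k \geq 1.
\]
This identity is the crux of the argument and I expect deriving it cleanly — in particular making sure the hypotheses of Theorem~\ref{5.2KHN} are met for each power and that the set $T$ is genuinely independent of $k$ — to be the main (though modest) obstacle.

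Once this identity is in hand, the equivalence follows by a routine set-theoretic manipulation. For the forward direction, assume $I$ has the copersistence property, so $\mathrm{Ass}_R(R/I^k) \supseteq \mathrm{Ass}_R(R/I^{k+1})$ for all $k \geq 1$. Then
\[
\mathrm{Ass}_R\bigl(R/(hI)^k\bigr) = \mathrm{Ass}_R(R/I^k) \cup T \supseteq \mathrm{Ass}_R(R/I^{k+1}) \cup T = \mathrm{Ass}_R\bigl(R/(hI)^{k+1}\bigr),
\]
so $hI$ has the copersistence property. For the converse, assume $hI$ has the copersistence property; taking a prime $\mathfrak{p} \in \mathrm{Ass}_R(R/I^{k+1})$, it lies in $\mathrm{Ass}_R(R/(hI)^{k+1}) \subseteq \mathrm{Ass}_R(R/(hI)^{k}) = \mathrm{Ass}_R(R/I^k) \cup T$. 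If $\mathfrak{p} \in \mathrm{Ass}_R(R/I^k)$ we are done; otherwise $\mathfrak{p} = (x_{j_t})$ for some $t$, but then I would argue $\mathfrak{p} \in \mathrm{Ass}_R(R/I^{k+1})$ together with part (i) of Proposition~\ref{Pro.supp} forces $x_{j_t} \in \mathrm{supp}(I)$, hence $\mathfrak{p} = (x_{j_t}) \in \mathrm{Min}(I) \subseteq \mathrm{Ass}_R(R/I^k)$ as well. Either way $\mathfrak{p} \in \mathrm{Ass}_R(R/I^k)$, proving copersistence for $I$.

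A small subtlety worth flagging: in the converse one must rule out the possibility that some $(x_{j_t}) \in T$ belongs to $\mathrm{Ass}_R(R/(hI)^{k+1})$ but to no power-associated prime of $I$ — this is exactly why the argument above invokes that such a $\mathfrak{p}$, being an associated prime of a power of $I$, must involve a variable in $\mathrm{supp}(I)$, and a height-one monomial prime generated by a variable dividing a generator of $I$ is automatically a minimal prime of $I$ and therefore in every $\mathrm{Ass}_R(R/I^k)$. If instead $x_{j_t} \notin \mathrm{supp}(I)$, then $(x_{j_t})$ never appears in $\mathrm{Ass}_R(R/I^{k+1})$ by Proposition~\ref{Pro.supp}(ii), so the case simply does not arise. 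With that wrinkle handled, the proof is complete; no heavy computation is needed beyond the invocation of Theorem~\ref{5.2KHN}.
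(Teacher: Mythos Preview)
Your approach is essentially identical to the paper's: both invoke Theorem~\ref{5.2KHN} to obtain $\mathrm{Ass}_R(R/(hI)^k) = \mathrm{Ass}_R(R/I^k) \cup T$ for every $k$, and the forward direction and converse then proceed by the same set-theoretic manipulation.

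There is one small slip in your edge case for the converse. The assertion ``a height-one monomial prime generated by a variable dividing a generator of $I$ is automatically a minimal prime of $I$'' is false in general (take $I=(x_1,\,x_2x_3)$ and the variable $x_2$: then $x_2\in\mathrm{supp}(I)$ but $(x_2)\not\supseteq I$), so the detour through Proposition~\ref{Pro.supp} (where, incidentally, you want part~(ii), not~(i)) does not by itself yield $(x_{j_t})\in\mathrm{Min}(I)$. The correct justification---and this is what the paper does---is that any associated prime of $I^{k+1}$ contains $I^{k+1}$; since $(x_{j_t})$ has height one it must then lie in $\mathrm{Min}(I^{k+1})=\mathrm{Min}(I^k)\subseteq\mathrm{Ass}_R(R/I^k)$. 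With that one-line fix your argument is complete and matches the paper's.
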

\begin{proof}
 Let $h=x_{j_1}^{b_1}\cdots x_{j_s}^{b_s}$ with $j_1, \ldots, j_s \in \{1, \ldots, n\}$. Fix $t\geq 1$. 
 According to Theorem \ref{5.2KHN}, we obtain the following 
 \begin{equation} \label{Multiple.1}
 \mathrm{Ass}_R(R/(hI)^{t+1})=\mathrm{Ass}_R(R/I^{t+1})\cup\{ (x_{j_1}),\ldots,(x_{j_s})\},
 \end{equation}
  and  
 \begin{equation} \label{Multiple.2}
 \mathrm{Ass}_R(R/(hI)^t)=\mathrm{Ass}_R(R/I^t)\cup\{ (x_{j_1}),\ldots,(x_{j_s})\}.
 \end{equation}
  To show the forward implication, let $I$ have the copersistence property. Hence, we have $\mathrm{Ass}_R(R/I^t) \supseteq  \mathrm{Ass}_R(R/I^{t+1})$. 
 It follows immediately from (\ref{Multiple.1}) and (\ref{Multiple.2}) that  $\mathrm{Ass}_R(R/(hI)^t) \supseteq  \mathrm{Ass}_R(R/(hI)^{t+1})$; 
 equivalently,   $hI$ has the copersistence property. Conversely, assume that  $hI$ has  the copersistence property and pick 
 $\mathfrak{p}\in \mathrm{Ass}_R(R/I^{t+1})$. One can rapidly deduce from  (\ref{Multiple.1}) 
 that $\mathfrak{p}\in \mathrm{Ass}_R(R/(hI)^{t+1})$. Since  $\mathrm{Ass}_R(R/(hI)^t) \supseteq  \mathrm{Ass}_R(R/(hI)^{t+1})$, we get  
  $\mathfrak{p}\in \mathrm{Ass}_R(R/(hI)^t)$. Hence, (\ref{Multiple.2}) gives  $\mathfrak{p}\in  \mathrm{Ass}_R(R/I^t)\cup\{ (x_{j_1}),\ldots,(x_{j_s})\}.$ 
If $\mathfrak{p}\in \mathrm{Ass}_R(R/I^t)$, then the proof is over. We thus assume $\mathfrak{p}=(x_{j_c})$ for some  $1\leq c \leq s$. 
This implies  that $\mathfrak{p} \in \mathrm{Min}(I^{t+1})$. Due to $\mathrm{Min}(I^t)= \mathrm{Min}(I^{t+1})$ and 
 $ \mathrm{Min}(I^{t}) \subseteq \mathrm{Ass}_R(R/I^t)$, we can derive  that $\mathfrak{p}\in \mathrm{Ass}_R(R/I^t)$. Consequently, 
 $I$ has the copersistence property, as desired.
\end{proof}


\subsection{Copersistence property under expansion}$\newline$

We first  recall the definition of the expansion operation  on monomial ideals, which has been presented  in \cite{BH}. \par 
Let $K$ be a field and $R = K[x_1, \ldots , x_n]$ be the polynomial ring over a field $K$ in the variables $x_1, \ldots , x_n$. Fix an ordered $n$-tuple $(i_1, \ldots , i_n)$ of positive integers, and consider the polynomial ring $R^{(i_1,\ldots ,i_n)}$ over $K$ in the variables $$x_{11}, \ldots , x_{1i_1} , x_{21}, \ldots , x_{2i_2} , \ldots , x_{n1}, \ldots , x_{ni_n}.$$
Let $\mathfrak{p}_j$ be the monomial prime ideal $(x_{j1}, x_{j2}, \ldots , x_{ji_j}) \subseteq R^{(i_1,\ldots,i_n)}$ for all $j=1,\ldots,n$. Attached to each monomial ideal $I \subset R$ a set of monomial generators $\{\mathbf {x}^{\mathbf{ a}_1} , \ldots , {\mathbf x}^{{\mathbf a}_m}\}$, where ${\mathbf x}^{\mathbf a_i}={x_1}^{a_i(1)}\cdots {x_n}^{a_i(n)}$ and $a_i(j)$ denotes the $j$th component of the vector ${\mathbf a}_i=(a_i(1),\ldots,a_i(n))$ for all $i=1,\ldots,m$. We define the {\it expansion of I with respect to the n-tuple $(i_1, \ldots, i_n)$}, denoted by $I^{(i_1,\ldots,i_n)}$, to be the monomial ideal
$$I^{(i_1,\ldots,i_n)} = \sum_{i=1}^m \mathfrak{p}_1^{a_i(1)}\cdots \mathfrak{p}_n^{a_i(n)}\subseteq R^{(i_1,\ldots,i_n)}.$$
We simply write $R^*$ and $I^*$,
respectively, rather than $R^{(i_1,\ldots,i_n)}$ and $I^{(i_1,\ldots,i_n)}$.\par
As an  example, assume that  $R = K[x_1, x_2, x_3]$ and the ordered $3$-tuple $(3,1,2)$. Then
we get $\mathfrak{p}_1 = (x_{11},x_{12}, x_{13})$, $\mathfrak{p}_2 = (x_{21})$, and $\mathfrak{p}_3 = (x_{31}, x_{32})$. This implies that 
for the monomial ideal $I = (x_1x_2^2,  x_2x_3, x_3^3)$, the ideal $I^* \subseteq K[x_{11}, x_{12}, x_{13}, x_{21}, x_{31}, x_{32}]$ is $\mathfrak{p}_1\mathfrak{p}^2_2+\mathfrak{p}_2\mathfrak{p}_3+\mathfrak{p}^3_3$, namely
$$I^* =(x_{11}x_{21}^2,x_{12}x_{21}^2,x_{13}x_{21}^2,x_{21}x_{31},x_{21}x_{32},x_{31}^3,x_{31}^2x_{32},x_{31}x_{32}^2,x_{32}^3).$$

\bigskip
To formulate Theorem \ref{Th.Expansion}, we have to utilize the  following auxiliary results. 
 
\begin{lemma} (\cite [Lemma 1.1]{BH}) \label{Lem.Bayati.Expansion}
Let $I$ and $J$ be monomial ideals in a polynomial ring $S$. Then
\begin{itemize}
\item[(i)] $f \in I^*$ if and only if $\pi(f)\in I$, for all $f\in S^*$;
\item[(ii)] $(I + J)^* = I^* + J^*$;
\item[(iii)] $(IJ)^* = I^*J^*$;
\item[(iv)] $(I \cap J)^* = I^*\cap J^*$;
\item[(v)] $(I : J)^* = (I^* : J^*)$;
\item[(vi)] $\sqrt{I^*} = (\sqrt{I})^*$;
\item[(vii)] If the monomial ideal $Q$ is $\mathfrak{p}$-primary, then $Q^*$ is
$\mathfrak{p}^*$-primary.
\end{itemize}
\end{lemma}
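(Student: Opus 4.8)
This is \cite[Lemma 1.1]{BH}, so in the paper it is simply quoted; were I to prove it, the plan would be to treat part (i) as the master statement and then read off the rest. Let $\pi\colon S^{*}\to S$ be the $K$-algebra surjection with $\pi(x_{jk})=x_{j}$ for all admissible $j,k$. For (i), I would first note that $I^{*}$ is a monomial ideal of $S^{*}$, so membership is tested monomial by monomial; for a monomial $u=\prod_{j,k}x_{jk}^{c_{jk}}$ put $a_{j}=\sum_{k}c_{jk}$, so that $\pi(u)=\prod_{j}x_{j}^{a_{j}}$. The only combinatorial input is the elementary observation that a monomial of $S^{*}$ lies in $\mathfrak{p}_{1}^{b_{1}}\cdots\mathfrak{p}_{n}^{b_{n}}$ exactly when its total degree in the $j$-th block of variables is at least $b_{j}$ for every $j$; applied to each generator $\mathbf{x}^{\mathbf{a}_{i}}$ of $I$, this gives $u\in I^{*}$ iff $\mathbf{x}^{\mathbf{a}_{i}}\mid\pi(u)$ for some $i$, i.e.\ iff $\pi(u)\in I$, which is (i). (As a byproduct this shows $I^{*}$ is independent of the chosen generating set.)

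Next I would deduce the algebraic identities. Parts (ii) and (iii) do not even use (i): they drop out of the very definition of $I^{*}$ together with $\mathfrak{p}_{j}^{a+b}=\mathfrak{p}_{j}^{a}\mathfrak{p}_{j}^{b}$ and distributivity, once one records that a generating set of $I+J$ (resp.\ of $IJ$) is the union (resp.\ the set of pairwise products) of generating sets of $I$ and $J$. Part (iv) is the same style of computation, now also invoking $\mathfrak{p}_{j}^{a}\cap\mathfrak{p}_{j}^{b}=\mathfrak{p}_{j}^{\max(a,b)}$ and the fact that intersection of monomial ideals distributes over sums. For (v) I would argue through (i): both $(I^{*}:J^{*})$ and $(I:J)^{*}$ are monomial ideals, and for a monomial $g$ one has $g\in(I^{*}:J^{*})$ iff $\pi(g)\pi(w)\in I$ for every monomial $w\in J^{*}$; since by (i) the monomials of $J^{*}$ are mapped by $\pi$ onto the monomials of $J$, this says $\pi(g)\in(I:J)$, i.e.\ $g\in(I:J)^{*}$. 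Part (vi) is again immediate from (i): $f\in\sqrt{I^{*}}$ iff $f^{n}\in I^{*}$ for some $n$ iff $\pi(f)^{n}\in I$ iff $\pi(f)\in\sqrt{I}$ iff $f\in(\sqrt{I})^{*}$.

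The step I expect to require the most care is (vii). Suppose $Q$ is $\mathfrak{p}$-primary with $\mathfrak{p}=(x_{j}:j\in T)$. By (vi), $\sqrt{Q^{*}}=(\sqrt{Q})^{*}=\mathfrak{p}^{*}=(x_{jk}:j\in T,\ 1\le k\le i_{j})$, which is a monomial prime, so it remains to verify that $Q^{*}$ is primary, i.e.\ that whenever $u,v\in S^{*}$ are monomials with $uv\in Q^{*}$ then $u\in Q^{*}$ or $v\in\sqrt{Q^{*}}$. Applying $\pi$ and (i), $\pi(u)\pi(v)=\pi(uv)\in Q$; since $Q$ is $\mathfrak{p}$-primary, either $\pi(u)\in Q$, whence $u\in Q^{*}$ by (i), or $\pi(v)^{m}\in Q$ for some $m$, i.e.\ $\pi(v^{m})\in Q$, whence $v^{m}\in Q^{*}$ by (i) and so $v\in\sqrt{Q^{*}}$. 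In short, the whole lemma rests on getting (i) right — the monomial reduction there is the one place I would be careful — after which (ii)--(vii) are bookkeeping, the only non-formal ingredient being the use of the hypothesis that $Q$ is primary in (vii).
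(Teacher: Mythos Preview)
Your observation is correct: the paper merely cites this as \cite[Lemma~1.1]{BH} and gives no proof, so there is nothing in the paper to compare against. Your sketch is essentially the standard argument and is sound; part (i) is indeed the linchpin, and (v)--(vii) follow from it exactly as you describe. Two small remarks: first, your claim that (ii)--(iv) ``do not even use (i)'' is slightly overstated, since the definition of $I^{*}$ is given in terms of a chosen generating set and you need independence of that choice (which, as you note, is a byproduct of (i)) before you can freely take unions or products of generating sets; second, (iv) is actually cleaner via (i) directly---a monomial $u$ lies in $I^{*}\cap J^{*}$ iff $\pi(u)\in I$ and $\pi(u)\in J$ iff $\pi(u)\in I\cap J$ iff $u\in(I\cap J)^{*}$---which avoids the detour through distributivity. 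For (vii), your reduction to monomials is fine because a monomial ideal whose radical is a monomial prime is primary precisely when all its minimal monomial generators involve only the variables of that prime, and your monomial argument forces exactly this.
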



\begin{proposition} (\cite [Proposition 1.2]{BH}) \label{Pro.Bayati.Expansion}
Let $I$ be a monomial ideal, and consider an (irredundant) primary
decomposition $I = Q_1\cap\cdots \cap Q_m$ of $I$. Then 
$I^*={Q^*_1} \cap \cdots \cap {Q^*_m}$ is an (irredundant) primary decomposition of $I^*$.
In particular, $\mathrm{Ass}(S^*/I^*) = \{\mathfrak{p}^* : \mathfrak{p} \in \mathrm{Ass}(S/I)\}.$
\end{proposition}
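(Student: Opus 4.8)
The plan is to reduce the statement to the formal identities for the expansion operation collected in Lemma \ref{Lem.Bayati.Expansion}, together with the first uniqueness theorem for primary decomposition. Since a monomial ideal admits an irredundant primary decomposition into monomial primary ideals, I take the $Q_i$ in the given decomposition $I = Q_1 \cap \cdots \cap Q_m$ to be monomial, so that each $Q_i^*$ is defined. First I would apply Lemma \ref{Lem.Bayati.Expansion}(iv) and induct on $m$ to obtain $I^* = Q_1^* \cap \cdots \cap Q_m^*$. Next, writing $\mathfrak{p}_i := \sqrt{Q_i}$, which is a monomial prime ideal, Lemma \ref{Lem.Bayati.Expansion}(vii) tells us that each $Q_i^*$ is $\mathfrak{p}_i^*$-primary; and one checks straight from the definition of the expansion that $\mathfrak{p}_i^*$ is again a monomial prime ideal, being generated by exactly the variables $x_{jk}$ of $S^*$ with $x_j \in \mathfrak{p}_i$. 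Hence $I^* = Q_1^* \cap \cdots \cap Q_m^*$ is a primary decomposition of $I^*$, and it remains to show it is irredundant.

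Irredundancy splits into two checks. For distinctness of the radicals $\mathfrak{p}_1^*, \ldots, \mathfrak{p}_m^*$, I would observe that $\mathfrak{p} \mapsto \mathfrak{p}^*$ is injective on monomial prime ideals: since every entry $i_j$ of the tuple is a positive integer, the variable $x_{j1}$ lies in $S^*$, and $x_j \in \mathfrak{p} \iff x_{j1} \in \mathfrak{p}^*$, so $\mathfrak{p}^*$ determines $\mathfrak{p}$; thus the $\mathfrak{p}_i^*$ are pairwise distinct. For non-redundancy of the components, I would use the retraction $\pi : S^* \to S$ from Lemma \ref{Lem.Bayati.Expansion}(i). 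Fix $j$. Irredundancy of $I = \bigcap_i Q_i$ provides a monomial $f \in \bigcap_{i \neq j} Q_i$ with $f \notin Q_j$. Regarding $f$ as an element of $S^*$ via the embedding $x_\ell \mapsto x_{\ell 1}$, so that $\pi(f) = f$, Lemma \ref{Lem.Bayati.Expansion}(i) applied with $Q_i$ in place of $I$ gives $f \in Q_i^*$ for all $i \neq j$ and $f \notin Q_j^*$. Hence $Q_j^*$ cannot be omitted, and $I^* = Q_1^* \cap \cdots \cap Q_m^*$ is irredundant.

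To conclude, I would invoke the first uniqueness theorem for primary decomposition: the set of associated primes of a module equals the set of radicals of the primary components in any irredundant primary decomposition. Applying this to $I$ gives $\mathrm{Ass}(S/I) = \{\mathfrak{p}_1, \ldots, \mathfrak{p}_m\}$, and applying it to $I^*$ with the irredundant decomposition just obtained gives $\mathrm{Ass}(S^*/I^*) = \{\mathfrak{p}_1^*, \ldots, \mathfrak{p}_m^*\} = \{\mathfrak{p}^* : \mathfrak{p} \in \mathrm{Ass}(S/I)\}$, as claimed.

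I expect the only genuinely delicate point to be the non-redundancy step, where the witness $f$ must be lifted to $S^*$ compatibly with $\pi$; this is exactly why the embedding $x_\ell \mapsto x_{\ell 1}$ is used, and it is legitimate precisely because every coordinate of the tuple $(i_1, \ldots, i_n)$ is positive, so that all the variables $x_{\ell 1}$ are present in $S^*$. Everything else is routine bookkeeping with the formal properties recorded in Lemma \ref{Lem.Bayati.Expansion}.
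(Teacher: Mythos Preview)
The paper does not supply its own proof of this proposition; it is quoted verbatim from \cite[Proposition~1.2]{BH} as a preliminary result, so there is nothing in the present paper to compare against. That said, your argument is correct and is precisely the natural proof: intersection commutes with expansion by Lemma~\ref{Lem.Bayati.Expansion}(iv), each $Q_i^*$ is $\mathfrak{p}_i^*$-primary by part~(vii), injectivity of $\mathfrak{p}\mapsto\mathfrak{p}^*$ on monomial primes follows since $x_j\in\mathfrak{p}\Leftrightarrow x_{j1}\in\mathfrak{p}^*$, and your lifting of a redundancy witness via $x_\ell\mapsto x_{\ell 1}$ combined with part~(i) handles irredundancy cleanly.
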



 The next theorem asserts that a monomial ideal has the copersistence property if and only if its expansion  has the copersistence property.

\begin{theorem}\label{Th.Expansion} 
Let  $I \subset R=K[x_1, \ldots, x_n]$ be a monomial ideal. Then $I$ has the copersistence property if and only if $I^*$ has the copersistence property, 
where $I^*$ denotes the  expansion of $I$.  
\end{theorem}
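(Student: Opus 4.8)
The plan is to reduce the claim about powers of $I^{*}$ to the corresponding statement about powers of $I$ by using the compatibility of expansion with products and with taking associated primes. First I would record the key identity: for every positive integer $k$ one has $(I^{k})^{*} = (I^{*})^{k}$. This is immediate from Lemma \ref{Lem.Bayati.Expansion}(iii) by induction on $k$, since $(I^{k})^{*} = (I \cdot I^{k-1})^{*} = I^{*} (I^{k-1})^{*} = I^{*}(I^{*})^{k-1} = (I^{*})^{k}$. Next I would invoke Proposition \ref{Pro.Bayati.Expansion} applied to the monomial ideal $I^{k}$: it gives
\[
\mathrm{Ass}(R^{*}/(I^{*})^{k}) = \mathrm{Ass}(R^{*}/(I^{k})^{*}) = \{\mathfrak{p}^{*} : \mathfrak{p}\in \mathrm{Ass}(R/I^{k})\},
\]
for every $k\geq 1$.

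With this dictionary in hand, the proof is essentially a bookkeeping argument. The crucial observation is that the assignment $\mathfrak{p}\mapsto \mathfrak{p}^{*}$ is injective on monomial prime ideals of $R$: indeed $\mathfrak{p} = (x_{i_1},\ldots,x_{i_r})$ is sent to $\mathfrak{p}^{*} = \mathfrak{p}_{i_1}+\cdots+\mathfrak{p}_{i_r} = (x_{i_1 1},\ldots,x_{i_1 i_{i_1}},\ldots,x_{i_r 1},\ldots,x_{i_r i_{i_r}})$, and one recovers $\mathfrak{p}$ from $\mathfrak{p}^{*}$ by reading off which ``blocks'' of variables occur; equivalently $\pi(\mathfrak{p}^{*}) = \mathfrak{p}$ where $\pi$ is the substitution map of Lemma \ref{Lem.Bayati.Expansion}. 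Consequently, for two sets $A, B$ of monomial primes of $R$, the inclusion $A\supseteq B$ holds if and only if $\{\mathfrak{p}^{*}:\mathfrak{p}\in A\}\supseteq \{\mathfrak{p}^{*}:\mathfrak{p}\in B\}$.

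Now I would assemble the equivalence. Suppose $I$ has the copersistence property, so $\mathrm{Ass}(R/I^{k})\supseteq \mathrm{Ass}(R/I^{k+1})$ for all $k\geq 1$. Applying the order-preserving, injective map $\mathfrak{p}\mapsto\mathfrak{p}^{*}$ and the displayed identity for $\mathrm{Ass}$, we get $\mathrm{Ass}(R^{*}/(I^{*})^{k})\supseteq \mathrm{Ass}(R^{*}/(I^{*})^{k+1})$ for all $k\geq 1$, i.e.\ $I^{*}$ has the copersistence property. Conversely, if $I^{*}$ has the copersistence property, then for each $k$ we have $\{\mathfrak{p}^{*}:\mathfrak{p}\in\mathrm{Ass}(R/I^{k})\}\supseteq \{\mathfrak{p}^{*}:\mathfrak{p}\in\mathrm{Ass}(R/I^{k+1})\}$, and injectivity of $\mathfrak{p}\mapsto\mathfrak{p}^{*}$ lets us conclude $\mathrm{Ass}(R/I^{k})\supseteq\mathrm{Ass}(R/I^{k+1})$, so $I$ has the copersistence property.

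The only genuinely substantive point is the injectivity (and order-reflecting property) of $\mathfrak{p}\mapsto\mathfrak{p}^{*}$ on monomial primes; everything else is a mechanical transport of structure along Lemma \ref{Lem.Bayati.Expansion} and Proposition \ref{Pro.Bayati.Expansion}. I expect this to be routine since the expansion replaces the variable $x_j$ by a block $x_{j1},\ldots,x_{ji_j}$ of fresh variables disjoint across $j$, so distinct monomial primes land in distinct expanded primes. I would therefore spend at most a sentence justifying it, perhaps by noting $\pi(\mathfrak{p}^{*}) = \mathfrak{p}$, and keep the rest of the write-up short.
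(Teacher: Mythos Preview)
Your proof is correct and follows essentially the same approach as the paper: both use Lemma~\ref{Lem.Bayati.Expansion}(iii) to identify $(I^{*})^{k}$ with $(I^{k})^{*}$ and Proposition~\ref{Pro.Bayati.Expansion} to transport associated primes along $\mathfrak{p}\mapsto\mathfrak{p}^{*}$. Your explicit remark about the injectivity of $\mathfrak{p}\mapsto\mathfrak{p}^{*}$ is a welcome addition, since it is precisely what makes the converse direction (left as ``a similar argument'' in the paper) go through.
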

\begin{proof}
 Let $I \subset R$ have  the copersistence property. Fix  $s \geq 1$ and take  $\mathfrak{q}\in \mathrm{Ass}_{R^*}(R^*/(I^*)^{s+1})$. 
From  Lemma \ref{Lem.Bayati.Expansion}(iii), we get  $(I^*)^{s+1}=(I^{s+1})^*$, and so  $\mathfrak{q}\in \mathrm{Ass}_{R^*}(R^*/(I^{s+1})^*)$. 
Due to  Proposition \ref{Pro.Bayati.Expansion}, we deduce that   $\mathfrak{p}\in \mathrm{Ass}_R(R/I^{s+1})$, where $\mathfrak{q}=\mathfrak{p}^*$. 
 Since  $I$ has the copersistence property, this implies that  $\mathfrak{p}\in \mathrm{Ass}_R(R/I^s)$. Using  Proposition \ref{Pro.Bayati.Expansion}, we have 
  $\mathfrak{q}\in \mathrm{Ass}_{R^*}(R^*/(I^s)^*)$. By  Lemma \ref{Lem.Bayati.Expansion}(iii), we have  $(I^*)^{s}=(I^{s})^*$, 
 and hence  $\mathfrak{q}\in \mathrm{Ass}_{R^*}(R^*/(I^*)^s)$.   Accordingly,  $I^*\subset R^*$ has the copersistence property.   
 
The converse can be proved  by a similar argument.  
\end{proof}


\subsection{Copersistence property under weighting} $\newline$

The purpose  of  this subsection is to study the copersistence property under weighting operation. To reach this aim, we first review the needed definition.

\begin{definition}
\em{
A \textit{weight} over a polynomial ring $R=K[x_1, \ldots, x_n]$ is a function $W: \{x_1, \ldots, x_n\} \rightarrow \mathbb{N}$ such that  $w_i=W(x_i)$ is called the {\it weight} of the variable $x_i$. For a monomial ideal $I\subset R $ and a weight $W$, we define the \textit{weighted ideal}, denoted 
by $I_W$, to be the ideal generated by $\{h(u) : u\in \mathcal{G}(I) \}$, where $h$ is the unique homomorphism $h: R \rightarrow R$ given by 
$h(x_i)= x_i^{w_i}$. 
}
\end{definition}


To see  an example, let  $R=K[x_1, x_2, x_3, x_4, x_5]$ and the monomial ideal  $I=(x_1^2x_4, x_2^4x_4^2x_5, x_1x_3^2x_5^3)$ in $R$. 
Also,  let $W : \{x_1, x_2, x_3, x_4, x_5\} \rightarrow \mathbb{N}$ be a weight over $R$ with $W(x_1)=3$, $W(x_2)=2$, $W(x_3)=2$, $W(x_4)=4$, and $W(x_5)=1$. One can easily check that  the weighted ideal $I_W$ is given by 
$$I_W=(x_1^6x_4^4, x_2^8x_4^8x_5, x_1^3x_3^4x_5^3).$$

\bigskip
To establish Theorem \ref{Th.Weighting}, one has to refer to the below lemmata. 

\begin{lemma}\label{LEM. Weighted} (\cite[Lemma 3.5]{SN})
Let $I$ and $J$ be two monomial ideals of a polynomial ring $R=K[x_1, \ldots, x_n]$, and $W$ a weight over $R$. Then the following statements hold. 
\begin {itemize}
\item[(i)] $(I+J)_W= I_W + J_W$;
\item[(ii)] $(IJ)_W= I_W J_W$;
\item[(iii)] $(I\cap J)_W= I_W \cap J_W$;
\item[(iv)] $(I :_RJ)_W= (I_W :_R J_W)$.
\end{itemize}
\end{lemma}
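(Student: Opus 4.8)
The plan is to translate each identity into a statement about exponent vectors and then read it off. Write $h\colon R\to R$ for the weighting homomorphism $x_i\mapsto x_i^{w_i}$; on a monomial it acts by $h(x^{a})=x^{Wa}$, where $x^{a}=x_1^{a_1}\cdots x_n^{a_n}$ and $Wa:=(w_1a_1,\dots,w_na_n)$. Since every $w_i\ge 1$, the map $a\mapsto Wa$ is injective on $\mathbb{N}^n$ and preserves the componentwise partial order, so $u\mid v\iff h(u)\mid h(v)$ for monomials $u,v$, and it commutes with componentwise maxima, $W\max(a,a')=\max(Wa,Wa')$. The first step I would establish is that $I_W$ is independent of the chosen monomial generating set: if $S$ is any set of monomials with $(S)=I$, then $(h(s):s\in S)=I_W$, because each $s\in S$ is divisible by some $u\in\mathcal{G}(I)$ and, conversely, each $u\in\mathcal{G}(I)$ lies in $(S)$ hence is divisible by some $s\in S$, and $h$ respects divisibility. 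Two consequences I would record for later use: $h$ maps $\mathcal{G}(I)$ bijectively onto $\mathcal{G}(I_W)$, and $I\subseteq I'$ implies $I_W\subseteq I'_W$.

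With this in hand, (i) and (ii) are essentially formal. Since $\mathcal{G}(I)\cup\mathcal{G}(J)$ is a monomial generating set of $I+J$, the independence statement gives $(I+J)_W=(h(w):w\in\mathcal{G}(I)\cup\mathcal{G}(J))=I_W+J_W$; and since $\{uv:u\in\mathcal{G}(I),\,v\in\mathcal{G}(J)\}$ generates $IJ$ and $h(uv)=h(u)h(v)$, we get $(IJ)_W=I_WJ_W$. For (iii), the inclusion $(I\cap J)_W\subseteq I_W\cap J_W$ is immediate from monotonicity. For the reverse inclusion I would take a monomial $x^{b}\in I_W\cap J_W$, choose $u=x^{a}\in\mathcal{G}(I)$ and $v=x^{a'}\in\mathcal{G}(J)$ with $Wa\le b$ and $Wa'\le b$ componentwise, and note that $\operatorname{lcm}(u,v)=x^{\max(a,a')}\in I\cap J$ while $h(\operatorname{lcm}(u,v))=x^{\max(Wa,Wa')}$ divides $x^{b}$; hence $x^{b}\in(I\cap J)_W$.

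Part (iv) is where the real work lies, and I expect it to be the main obstacle. First I would reduce to the principal case: writing $J=(v_1,\dots,v_t)$ with the $v_\ell$ monomials, one has $I:J=\bigcap_\ell (I:v_\ell)$ and $I_W:J_W=\bigcap_\ell (I_W:h(v_\ell))$ since the colon distributes over sums of generators, so iterating (iii) it suffices to show $(I:v)_W=(I_W:h(v))$ for a single monomial $v=x^{c}$. Now $I:x^{c}$ is generated by the monomials $x^{(\max(a_i-c_i,0))_i}$ as $x^{a}$ ranges over $\mathcal{G}(I)$, so by the independence statement $(I:x^{c})_W$ is generated by the monomials $x^{(w_i\max(a_i-c_i,0))_i}$. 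On the other hand, a monomial $x^{e}$ lies in $I_W:x^{Wc}$ iff $x^{e+Wc}\in I_W$ iff some $x^{a}\in\mathcal{G}(I)$ satisfies $w_ia_i\le e_i+w_ic_i$ for all $i$, equivalently $w_i\max(a_i-c_i,0)\le e_i$ for all $i$ (here $e_i\ge 0$ is what lets us insert the $\max$), which is exactly divisibility of $x^{e}$ by one of the listed generators. Hence the two ideals agree.

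The two points requiring care in (iv) are the interaction of $\max(\cdot,0)$ with multiplication by the positive weights $w_i$, and the fact that one is allowed to feed the (generally non-minimal) generating set $\{x^{(\max(a_i-c_i,0))_i}\}$ of $I:x^{c}$ into the weighting operation — which is precisely why the generating-set independence has to be proved at the outset. Everything else reduces to bookkeeping with componentwise inequalities.
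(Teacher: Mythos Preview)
The paper does not supply its own proof of this lemma; it simply quotes the result as \cite[Lemma~3.5]{SN} and uses it as a black box. So there is no in-paper argument to compare against.

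That said, your proof is correct and self-contained. The key organizing observation---that the exponent map $a\mapsto Wa$ is an order embedding of $(\mathbb{N}^n,\le)$ into itself, so $h$ preserves and reflects divisibility of monomials and commutes with componentwise $\max$---is exactly what makes all four identities routine. Your preliminary step, that $I_W$ is independent of the chosen monomial generating set of $I$, is the right thing to isolate first: it is what lets you feed the natural but non-minimal generating sets $\mathcal{G}(I)\cup\mathcal{G}(J)$, $\{uv\}$, and $\{u/\gcd(u,x^{c})\}$ directly into the weighting operation in (i), (ii), and (iv). The computation in (iv) is clean; the passage from $w_i(a_i-c_i)\le e_i$ to $w_i\max(a_i-c_i,0)\le e_i$ is valid precisely because $e_i\ge 0$ and $w_i>0$, as you note. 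Nothing is missing.
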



\begin{lemma}\label{ASS-Weighted} (\cite[Lemma 3.9]{SN})
Let $I$  be a monomial ideal in a polynomial ring $R=K[x_1, \ldots, x_n]$, and $W$ a weight over $R$. Then 
$\mathrm{Ass}_R(R/I_W)=\mathrm{Ass}_R(R/I).$
\end{lemma}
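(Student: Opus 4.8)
The plan is to pass to an irreducible decomposition of $I$ and to track what the weighting operation does to the individual components, exploiting that weighting commutes with finite intersections by Lemma~\ref{LEM. Weighted}(iii). The essential input from monomial ideal theory is the well-known fact (see, e.g., \cite{HH1}) that every monomial ideal admits an irredundant irreducible decomposition $I = M_1 \cap \cdots \cap M_t$ in which each component has the form $M_k = (x_{i_1}^{a_1}, \ldots, x_{i_{r_k}}^{a_{r_k}})$, and that $\mathrm{Ass}_R(R/I) = \{\sqrt{M_1}, \ldots, \sqrt{M_t}\}$, where $\sqrt{M_k} = (x_{i_1}, \ldots, x_{i_{r_k}})$. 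Thus it suffices to understand how weighting acts on an irreducible component and on the irredundancy of the decomposition.

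First I would record that for such a component one has, straight from the definition of the weighted ideal, $(M_k)_W = (x_{i_1}^{a_1 w_{i_1}}, \ldots, x_{i_{r_k}}^{a_{r_k} w_{i_{r_k}}})$. Since every $w_{i_l} \geq 1$, each exponent $a_l w_{i_l}$ is a positive integer, so $(M_k)_W$ is again an irreducible monomial ideal, with the \emph{same} radical $\sqrt{(M_k)_W} = (x_{i_1}, \ldots, x_{i_{r_k}}) = \sqrt{M_k}$. Applying Lemma~\ref{LEM. Weighted}(iii) inductively to the decomposition then yields $I_W = (M_1)_W \cap \cdots \cap (M_t)_W$, which is therefore an irreducible decomposition of $I_W$ whose components carry exactly the same radicals as those of $I$.

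It remains to show that this new decomposition is irredundant, and this is the step I expect to be the crux. For it I would establish the inclusion characterization that, for any two monomial ideals $A$ and $B$, one has $A \subseteq B$ if and only if $A_W \subseteq B_W$. The forward direction is the monotonicity of $h$ (if $u \mid v$ then $h(u) \mid h(v)$); the reverse direction is precisely where positivity of the weights is indispensable, since $h(v) \mid h(u)$ gives $w_i \deg_{x_i}(v) \leq w_i \deg_{x_i}(u)$, which forces $\deg_{x_i}(v) \leq \deg_{x_i}(u)$ for every $i$ (as $w_i \geq 1$), i.e. $v \mid u$. Granting this, irredundancy transfers: writing $\bigcap_{k \neq j}(M_k)_W = \bigl(\bigcap_{k \neq j} M_k\bigr)_W$ via Lemma~\ref{LEM. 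Weighted}(iii), the assumed irredundancy $\bigcap_{k \neq j} M_k \not\subseteq M_j$ yields $\bigl(\bigcap_{k \neq j} M_k\bigr)_W \not\subseteq (M_j)_W$ by the reflecting property. Hence $I_W = \bigcap_k (M_k)_W$ is irredundant, and comparing radicals gives $\mathrm{Ass}_R(R/I_W) = \{\sqrt{(M_k)_W}\}_k = \{\sqrt{M_k}\}_k = \mathrm{Ass}_R(R/I)$, as desired. The one delicate point throughout is the systematic use of $w_i \geq 1$: without positivity of the weights, the reflection of inclusions, and therefore the preservation of irredundancy, can fail.
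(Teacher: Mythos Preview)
Your argument is correct. Note, however, that the present paper does not supply its own proof of this lemma; it is simply quoted from \cite[Lemma~3.9]{SN}, so there is no in-paper argument to compare against. On its own merits your route is sound: pass to an irredundant irreducible decomposition, push the weighting through intersections via Lemma~\ref{LEM. Weighted}(iii), observe that each weighted irreducible component $(M_k)_W$ is again irreducible with the same radical, and then verify that irredundancy is preserved via the equivalence $A\subseteq B\Leftrightarrow A_W\subseteq B_W$ (which holds precisely because every $w_i\geq 1$). Each step is standard monomial-ideal bookkeeping, and the identification of $\mathrm{Ass}_R(R/I)$ with the set of radicals of components in an irredundant irreducible decomposition is exactly the right tool here.
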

 

The subsequent theorem states that a monomial ideal has the copersistence property if and only if its weighted  has the copersistence property.   

\begin{theorem}\label{Th.Weighting}
Let $I \subset R=K[x_1, \ldots, x_n]$  be a  monomial ideal,  and $W$ a weight over $R$. Then $I$ has the copersistence property if and only if $I_W$ has the copersistence property, where $I_W$ denotes the weighted ideal.  
\end{theorem}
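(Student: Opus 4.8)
The plan is to mirror the proof of Theorem~\ref{Th.Expansion} almost verbatim, replacing the expansion operation by the weighting operation and invoking the weighting analogues of the two facts used there. Specifically, the role of Lemma~\ref{Lem.Bayati.Expansion}(iii) (which says $(IJ)^*=I^*J^*$, and in particular $(I^*)^s=(I^s)^*$) will be played by Lemma~\ref{LEM. Weighted}(ii) (which gives $(IJ)_W=I_WJ_W$, hence $(I_W)^s=(I^s)_W$ for all $s\ge 1$ by an easy induction), and the role of Proposition~\ref{Pro.Bayati.Expansion} (which identifies $\mathrm{Ass}(S^*/I^*)$ with $\{\mathfrak{p}^*:\mathfrak{p}\in\mathrm{Ass}(S/I)\}$) will be played by Lemma~\ref{ASS-Weighted} (which gives the even cleaner statement $\mathrm{Ass}_R(R/I_W)=\mathrm{Ass}_R(R/I)$).

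First I would prove the forward implication. Assume $I$ has the copersistence property, fix $s\ge 1$, and pick $\mathfrak{q}\in\mathrm{Ass}_R(R/(I_W)^{s+1})$. Since $(I_W)^{s+1}=(I^{s+1})_W$ by Lemma~\ref{LEM. Weighted}(ii), we get $\mathfrak{q}\in\mathrm{Ass}_R(R/(I^{s+1})_W)$, and then Lemma~\ref{ASS-Weighted} gives $\mathfrak{q}\in\mathrm{Ass}_R(R/I^{s+1})$. Copersistence of $I$ yields $\mathfrak{q}\in\mathrm{Ass}_R(R/I^s)$; applying Lemma~\ref{ASS-Weighted} again (in the other direction) gives $\mathfrak{q}\in\mathrm{Ass}_R(R/(I^s)_W)$, and finally $(I^s)_W=(I_W)^s$ by Lemma~\ref{LEM. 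Weighted}(ii) gives $\mathfrak{q}\in\mathrm{Ass}_R(R/(I_W)^s)$. Hence $\mathrm{Ass}_R(R/(I_W)^s)\supseteq\mathrm{Ass}_R(R/(I_W)^{s+1})$ for all $s\ge1$, i.e. $I_W$ has the copersistence property.

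For the converse, the same chain of equalities runs the other way: if $I_W$ has the copersistence property and $\mathfrak{p}\in\mathrm{Ass}_R(R/I^{s+1})$, then $\mathfrak{p}\in\mathrm{Ass}_R(R/(I^{s+1})_W)=\mathrm{Ass}_R(R/(I_W)^{s+1})$ by Lemmas~\ref{ASS-Weighted} and \ref{LEM. Weighted}(ii), so copersistence of $I_W$ gives $\mathfrak{p}\in\mathrm{Ass}_R(R/(I_W)^s)=\mathrm{Ass}_R(R/(I^s)_W)=\mathrm{Ass}_R(R/I^s)$, as needed. I do not anticipate a genuine obstacle here: the only point that needs a one-line verification is the identity $(I_W)^s=(I^s)_W$, which follows by induction on $s$ from Lemma~\ref{LEM. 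Weighted}(ii) with $J=I^{s-1}$ (and $W$ fixed throughout). So the "hard part" is merely bookkeeping — making sure each invocation of Lemma~\ref{ASS-Weighted} is applied over the same ring $R$ and that the chain of set inclusions is traversed in the correct direction for each implication; there is no new idea beyond transporting the argument of Theorem~\ref{Th.Expansion}.
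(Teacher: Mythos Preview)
Your proposal is correct and matches the paper's proof essentially line for line: both directions use Lemma~\ref{LEM. Weighted}(ii) to identify $(I_W)^s$ with $(I^s)_W$ and Lemma~\ref{ASS-Weighted} to transfer associated primes back and forth, with the converse handled by the symmetric argument you wrote out (the paper just says ``a similar discussion'').
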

\begin{proof}
Assume $I$ has the copersistence property. Pick $\mathfrak{p}\in \mathrm{Ass}_R(R/I_W^{s+1})$, where $s\geq 1$. 
 Based on   Lemma \ref{LEM. Weighted}(ii), we have  $I_W^{s+1}=(I^{s+1})_W$, and so $\mathfrak{p}\in \mathrm{Ass}_R(R/(I^{s+1})_W)$. 
 It follows from  Lemma \ref{ASS-Weighted} that  $\mathfrak{p}\in \mathrm{Ass}_R(R/I^{s+1})$. On account of  $I$ has the copersistence property, 
 this yields that  $\mathfrak{p}\in \mathrm{Ass}_R(R/I^s)$.  According to  Lemma \ref{ASS-Weighted}, we get  $\mathfrak{p}\in \mathrm{Ass}_R(R/(I^s)_W)$. By virtue of  Lemma \ref{LEM. Weighted}(ii), we deduce that $I_W^{s}=(I^{s})_W$, and so $\mathfrak{p}\in \mathrm{Ass}_R(R/I_W^{s})$.
 This gives rise to   $I_W$ has the copersistence property, as claimed.   

A similar discussion can be used to  establish the converse.
\end{proof} 


In what follows, we will give an application of Theorems \ref{Th.Expansion} and \ref{Th.Weighting}. To do this, we first express the following example which 
is vital to frame  Theorem \ref{Th.Infinite. COP}.   We state it here for ease of reference.

\begin{example} \label{HNTT} (\cite[Example 3.1]{HNTT})  
 Let $A = k[x, y, z]$. For any integer $d\geq  2$, let  $I = (x^{d+2}, x^{d+1}y, xy^{d+1}, y^{d+2}, x^dy^2z)$. Then
\[
\mathrm{depth}(A/I^n) =
    \begin{dcases}
    0  & \text{if }  n\leq d-1,\\
1 & \text{if }   n\geq d. \\
                \end{dcases}
\]
\end{example}

The below theorem, as an application of Theorems \ref{Th.Expansion}  and \ref{Th.Weighting},  guarantees there exist infinitely many  monomial ideals  in $R=K[x_1, \ldots, x_n]$ such that  have  the copersistence property.

\begin{theorem} \label{Th.Infinite. COP}
Let $R=K[x_1, \ldots, x_n]$ be  a polynomial ring in $n$  variables, where $n\geq 3$,  with coefficients in a  field $K$. Then 
there exist infinitely many  monomial ideals  in $R$ such that  have  the copersistence property.
\end{theorem}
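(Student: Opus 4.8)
The plan is to exhibit a single monomial ideal $I_0$ with the copersistence property and then generate an infinite family from it by applying operations that preserve this property. The natural source for $I_0$ is Example \ref{HNTT}: for a fixed integer $d\geq 2$, the ideal $I_0=(x^{d+2},x^{d+1}y,xy^{d+1},y^{d+2},x^dy^2z)\subset K[x,y,z]$ has $\mathrm{depth}(A/I_0^n)=0$ for $n\leq d-1$ and $\mathrm{depth}(A/I_0^n)=1$ for $n\geq d$. Since $A/I_0^n$ has depth $0$ exactly when the maximal ideal $\mathfrak m=(x,y,z)$ lies in $\mathrm{Ass}_A(A/I_0^n)$, this says $\mathfrak m\in\mathrm{Ass}_A(A/I_0^n)$ precisely for $n\leq d-1$. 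First I would check that with this single jump at $n=d$, together with a direct verification that all the other (non-maximal, i.e.\ height $\leq 2$) associated primes stabilize immediately, the containment $\mathrm{Ass}_A(A/I_0^k)\supseteq\mathrm{Ass}_A(A/I_0^{k+1})$ holds for every $k\geq 1$: the only prime that can be lost as $k$ increases is $\mathfrak m$, and it is lost (never regained) at $k=d$. If one needs to be careful about the small primes, note that the radical of $I_0$ is $(x,y)$ (every generator is divisible by $x$ or by $y$), so $\mathrm{Min}(I_0)=\{(x,y)\}$ and the only candidate associated primes are $(x,y)$ and $\mathfrak m$; thus $\mathrm{Ass}_A(A/I_0^k)=\{(x,y),\mathfrak m\}$ for $1\leq k\leq d-1$ and $=\{(x,y)\}$ for $k\geq d$, which manifestly has the copersistence property.

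Having secured one example $I_0\subset K[x,y,z]$, I would produce infinitely many in $R=K[x_1,\dots,x_n]$ with $n\geq 3$ as follows. Embed $I_0$ into $R$ via a suitable relabelling of the three variables (this is harmless by Proposition \ref{Power}(iii), or simply by regarding $I_0 R$ and noting the added variables do not interfere). Then apply the weighting operation: for each weight $W\colon\{x_1,\dots,x_n\}\to\mathbb N$, Theorem \ref{Th.Weighting} guarantees that $(I_0)_W$ again has the copersistence property. There are infinitely many distinct weights, and distinct weights yield distinct ideals (e.g.\ the degrees of the minimal generators grow with the weights on the relevant variables), so $\{(I_0)_W : W \text{ a weight on } R\}$ is an infinite family of monomial ideals in $R$ all having the copersistence property. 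Alternatively, or in addition, one can iterate the expansion operation (Theorem \ref{Th.Expansion}) together with monomial multiples (Proposition \ref{Multiple}) to the same effect; the weighting argument is the cleanest.

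The main point requiring care — and the only genuine obstacle — is the verification that $I_0$ itself has the copersistence property, since Example \ref{HNTT} only records depth information. The translation from $\mathrm{depth}(A/I_0^n)$ to membership of $\mathfrak m$ in $\mathrm{Ass}_A(A/I_0^n)$ is standard (depth $0$ over a local/graded ring is equivalent to the maximal ideal being associated), and the identification of the remaining associated primes is an elementary computation with the primary decomposition of the powers $I_0^n$ in three variables. Once that is in hand, everything else is a bookkeeping argument: distinctness of the weighted ideals is immediate from looking at generator degrees, and the preservation statements are quoted verbatim from Theorems \ref{Th.Expansion} and \ref{Th.Weighting}. I would therefore devote the bulk of the written proof to pinning down $\mathrm{Ass}_A(A/I_0^n)$ for all $n$ and then close with one sentence invoking the weighting theorem to conclude infinitude.
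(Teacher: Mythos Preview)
Your proposal is correct and follows essentially the same route as the paper: start from the ideal of Example~\ref{HNTT}, show it has the copersistence property by combining the depth information with the observation that $(x,y)$ and $(x,y,z)$ are the only possible associated primes of any power, then manufacture infinitely many examples via Theorem~\ref{Th.Weighting}. The only cosmetic differences are that the paper computes $\mathrm{Ass}_S(S/L)$ from an explicit primary decomposition rather than the radical argument, and passes from three to $n$ variables using the expansion operation (Theorem~\ref{Th.Expansion}) instead of the bare extension $I_0R$; both choices are equally valid.
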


\begin{proof}
 Let  $L = (x_1^{d+2}, x_1^{d+1}x_2, x_1 x_2^{d+1}, x_2^{d+2}, x_1^dx_2^2x_3)\subset S=K[x_1, x_2, x_3]$, where $d\geq 2$.  
 Then one can easily check that 
$$L=(x_1^{d+2}, x_2) \cap (x_1^{d+1}, x_2^2)  \cap  (x_1^{d}, x_2^{d+1}) \cap  (x_1, x_2^{d+2})  \cap  (x_1^{d+1}, x_2^{d+1}, x_3).$$  
We thus get  $\mathrm{Ass}_S(S/L)=\{(x_1, x_2), (x_1, x_2, x_3)\}$. Since $\mathrm{Min}(L)=\{(x_1, x_2)\}$ 
and $\mathrm{Min}(L) = \mathrm{Min}(L^k)\subseteq \mathrm{Ass}_S(S/L^k)$ for all $k\geq  1$, we can deduce  that 
 $(x_1, x_2) \in \mathrm{Ass}_S(S/L^k)$ for all $k\geq 1$. 
 On the other hand,  it is well-known that,  for all $k\geq 1$,    $(x_1, x_2, x_3)\in \mathrm{Ass}_S(S/L^k)$  if and only if $\mathrm{depth}(S/L^k)=0$ 
    (see \cite[Exercise  2.2.11]{V1}). This allows us to conclude  from Example \ref{HNTT} that, for all $k\geq 1$, we have 
\[
\mathrm{Ass}_S(S/L^k) =
    \begin{dcases}
    \{(x_1, x_2), (x_1, x_2, x_3)\} & \text{if } k\leq d-1,\\
     \{(x_1, x_2)\} & \text{if } k \geq d. \\
                \end{dcases}
\]
 Hence,  $L$ has the copersistence property. Now, let $\mathfrak{p}_1:=(x_{i_1}, \ldots, x_{i_a})$, $\mathfrak{p}_2:=(x_{i_{a+1}}, \ldots, x_{i_b})$, and $\mathfrak{p}_3:=(x_{i_{b+1}}, \ldots, x_{i_c})$ with  $\mathrm{supp}(\mathfrak{p}_i) \cap  \mathrm{supp}(\mathfrak{p}_j)=\emptyset$ for all $1\leq i<j \leq 3$ and $\bigcup_{i=1}^{3}\mathrm{supp}(\mathfrak{p}_i)=\{x_1, \ldots, x_n\}$. 
 One can promptly deduce from Theorem \ref{Th.Expansion} that $L^*$ in $R=K[x_1, \ldots, x_n]$ has the copersistence property as well. Now, assume that  $I=(L^*)_W$ with $W(x_i)=\alpha_i$ such that $\alpha_i \geq 1$ for all $i=1, \ldots, n$. In  light of Theorem \ref{Th.Weighting}, we obtain $I$ 
has the copersistence property. In fact, there are infinitely many  such monomial ideals. This completes the proof. 
 \end{proof}


\subsection{Copersistence property under localization and contraction} $\newline$

In this subsection, we investigate the copersistence property under localization and contraction. To see this, we start with the following proposition. 

\begin{proposition}
Let $I$ be an  ideal  in a commutative Noetherian ring $R$. Then  $I$ has  the copersistence property if and only if $I_{\mathfrak{p}}$ has  the 
copersistence property for all $\mathfrak{p}\in \mathrm{Ass}_R(R/I)$, where $I_{\mathfrak{p}}$ denotes the localization of $I$  at  $\mathfrak{p}$. 
\end{proposition}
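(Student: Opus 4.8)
The plan is to prove the biconditional by translating statements about $\mathrm{Ass}_R(R/I^k)$ into statements about $\mathrm{Ass}_{R_{\mathfrak{p}}}(R_{\mathfrak{p}}/I_{\mathfrak{p}}^k)$ via Lemma \ref{Ass-Local-Sharp}, and by exploiting the fact that for a monomial (or more generally graded) ideal, every associated prime of every power is contained in some fixed finite set of associated primes of $I$ itself — actually, one only needs that $\mathrm{Ass}_R(R/I^k)$ for all $k$ live among the prime ideals that are contained in the maximal associated primes of $I$. The cleanest route: for a prime $\mathfrak{q} \in \mathrm{Ass}_R(R/I^k)$, we have $\mathfrak{q} \subseteq \mathfrak{p}$ for some $\mathfrak{p} \in \mathrm{Max}\,\mathrm{Ass}_R(R/I^k)$, and by Brodmann's stability together with the structure of monomial ideals, the maximal elements of $\bigcup_k \mathrm{Ass}_R(R/I^k)$ all already appear in $\mathrm{Ass}_R(R/I)$ — this is the point that needs the hypothesis (it holds for monomial ideals since $\mathrm{supp}$ of the associated primes is controlled by Proposition \ref{Pro.supp}, but more to the point one should just invoke that $I$ is being treated in a context where this holds, or restrict $\mathfrak{p}$ to range over $\mathrm{Ass}_R(R/I)$ as the statement does).

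First I would record the key translation: by Lemma \ref{Ass-Local-Sharp} applied with $S = R \setminus \mathfrak{p}$ and $M = R/I^k$, one has $\mathrm{Ass}_{R_{\mathfrak{p}}}(R_{\mathfrak{p}}/I_{\mathfrak{p}}^k) = \{\mathfrak{q}R_{\mathfrak{p}} : \mathfrak{q} \in \mathrm{Ass}_R(R/I^k),\ \mathfrak{q} \subseteq \mathfrak{p}\}$, using that localization commutes with powers so $(I^k)_{\mathfrak{p}} = (I_{\mathfrak{p}})^k$. Moreover $\mathfrak{q} \mapsto \mathfrak{q}R_{\mathfrak{p}}$ is an inclusion-preserving bijection between primes of $R$ contained in $\mathfrak{p}$ and primes of $R_{\mathfrak{p}}$. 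So $\mathrm{Ass}_R(R/I^k) \supseteq \mathrm{Ass}_R(R/I^{k+1})$ holds if and only if for every $\mathfrak{p} \in \mathrm{Ass}_R(R/I)$ (indeed it suffices to test on primes $\mathfrak{p}$ large enough to capture all the relevant $\mathfrak{q}$'s) we have $\mathrm{Ass}_{R_{\mathfrak{p}}}(R_{\mathfrak{p}}/I_{\mathfrak{p}}^k) \supseteq \mathrm{Ass}_{R_{\mathfrak{p}}}(R_{\mathfrak{p}}/I_{\mathfrak{p}}^{k+1})$.

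Then the argument splits. For the forward direction ($I$ copersistent $\Rightarrow$ each $I_{\mathfrak{p}}$ copersistent): fix $\mathfrak{p} \in \mathrm{Ass}_R(R/I)$, fix $k \geq 1$, take $\mathfrak{Q} \in \mathrm{Ass}_{R_{\mathfrak{p}}}(R_{\mathfrak{p}}/I_{\mathfrak{p}}^{k+1})$; write $\mathfrak{Q} = \mathfrak{q}R_{\mathfrak{p}}$ with $\mathfrak{q} \in \mathrm{Ass}_R(R/I^{k+1})$, $\mathfrak{q} \subseteq \mathfrak{p}$; by copersistence $\mathfrak{q} \in \mathrm{Ass}_R(R/I^k)$, and since still $\mathfrak{q} \subseteq \mathfrak{p}$, Lemma \ref{Ass-Local-Sharp} gives $\mathfrak{Q} = \mathfrak{q}R_{\mathfrak{p}} \in \mathrm{Ass}_{R_{\mathfrak{p}}}(R_{\mathfrak{p}}/I_{\mathfrak{p}}^k)$. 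For the converse ($I_{\mathfrak{p}}$ copersistent for all $\mathfrak{p} \in \mathrm{Ass}_R(R/I)$ $\Rightarrow$ $I$ copersistent): fix $k \geq 1$ and take $\mathfrak{q} \in \mathrm{Ass}_R(R/I^{k+1})$; the crucial point is to locate a $\mathfrak{p} \in \mathrm{Ass}_R(R/I)$ with $\mathfrak{q} \subseteq \mathfrak{p}$ — here I would choose $\mathfrak{p}$ to be a maximal element of $\mathrm{Ass}_R(R/I^{k+1})$ containing $\mathfrak{q}$ and argue (for monomial ideals, via Proposition \ref{Pro.supp}, or by noting that the maximal associated primes are fixed across powers for monomial ideals) that such a maximal element lies in $\mathrm{Ass}_R(R/I)$; then $\mathfrak{q}R_{\mathfrak{p}} \in \mathrm{Ass}_{R_{\mathfrak{p}}}(R_{\mathfrak{p}}/I_{\mathfrak{p}}^{k+1})$, copersistence of $I_{\mathfrak{p}}$ gives $\mathfrak{q}R_{\mathfrak{p}} \in \mathrm{Ass}_{R_{\mathfrak{p}}}(R_{\mathfrak{p}}/I_{\mathfrak{p}}^k)$, and contracting back via Lemma \ref{Ass-Local-Sharp} yields $\mathfrak{q} \in \mathrm{Ass}_R(R/I^k)$.

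The main obstacle is exactly that converse step: justifying that every $\mathfrak{q} \in \mathrm{Ass}_R(R/I^{k+1})$ is contained in some $\mathfrak{p} \in \mathrm{Ass}_R(R/I)$, i.e. that the maximal associated primes of the powers of $I$ already occur at the first power. For a general Noetherian ring this can fail, so the proof must either restrict to the monomial setting (where $\mathrm{Ass}_R(R/I^k)$ consists of monomial primes and the maximal ones are controlled — in fact for monomial ideals one has $\mathrm{Max}\,\mathrm{Ass}_R(R/I^k) \subseteq \mathrm{Ass}_R(R/I)$ is not literally true either, so care is needed) or, more honestly, one should phrase the test set of $\mathfrak{p}$'s as $\bigcup_{k\ge 1}\mathrm{Ass}_R(R/I^k)$ — which by Brodmann is a finite set — or simply all primes of $R$; I expect the intended reading is that $\mathfrak{p}$ ranges over a set large enough that every relevant $\mathfrak{q}$ is caught, and the honest fix is to either (a) observe that it is enough to know $I_{\mathfrak{p}}$ is copersistent for $\mathfrak{p}$ ranging over $\mathrm{Ass}^\infty(I) \cup \mathrm{Ass}_R(R/I)$, or (b) use that for $\mathfrak{q} \in \mathrm{Ass}_R(R/I^{k+1})$ one always has $\mathfrak{q} \supseteq$ some minimal prime, hence $\mathfrak{q}$ contains, and is contained in, monomial primes, and pass to $\mathfrak{p} := \mathfrak{q}$ itself when $\mathfrak{q} \in \mathrm{Ass}_R(R/I)$ and otherwise to a maximal associated prime above it, invoking the monomial structure. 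I would write the converse with $\mathfrak{p}$ chosen as a maximal element of $\mathrm{Ass}_R(R/I^{k+1})$ dominating $\mathfrak{q}$, then cite that for monomial ideals this forces $\mathfrak{p} \in \mathrm{Ass}_R(R/I)$ (or adjust the statement of the test set accordingly).
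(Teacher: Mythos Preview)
Your forward direction is exactly the paper's: apply Lemma~\ref{Ass-Local-Sharp} to write an associated prime of $(I_{\mathfrak p})^{k+1}$ as $\mathfrak p'R_{\mathfrak p}$ with $\mathfrak p'\in\mathrm{Ass}_R(R/I^{k+1})$, descend via copersistence of $I$, and go back up. The paper then simply writes ``the converse can be concluded by a similar argument.''

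Your suspicion about the converse is completely justified, and in fact the converse as stated is \emph{false}. Take $I=(xy,yz,xz)\subset R=K[x,y,z]$, the edge ideal of the triangle. Then $\mathrm{Ass}_R(R/I)=\{(x,y),(y,z),(x,z)\}$, and for each such $\mathfrak p$ the variable outside $\mathfrak p$ becomes a unit in $R_{\mathfrak p}$, so $I_{\mathfrak p}=\mathfrak pR_{\mathfrak p}$ is the maximal ideal of a two-dimensional regular local ring; hence $\mathrm{Ass}_{R_{\mathfrak p}}(R_{\mathfrak p}/I_{\mathfrak p}^k)=\{\mathfrak pR_{\mathfrak p}\}$ for all $k$, and every $I_{\mathfrak p}$ trivially has the copersistence property. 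Yet $(I^2:xyz)=(x,y,z)$, so $\mathfrak m\in\mathrm{Ass}_R(R/I^2)\setminus\mathrm{Ass}_R(R/I)$ and $I$ does \emph{not} have the copersistence property. The obstruction is precisely the one you isolated: an associated prime $\mathfrak q$ of $I^{k+1}$ need not be contained in any $\mathfrak p\in\mathrm{Ass}_R(R/I)$, so there is no way to ``see'' $\mathfrak q$ from any of the prescribed localizations. None of the fixes you list (passing to monomial ideals, invoking Proposition~\ref{Pro.supp}, taking maximal associated primes) repairs this; the only honest repair is to enlarge the test set of primes, e.g.\ to all $\mathfrak p\in\bigcup_{k\ge 1}\mathrm{Ass}_R(R/I^k)$, in which case both directions follow immediately from Lemma~\ref{Ass-Local-Sharp} as you outline.
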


\begin{proof}
First, let   $I$ have the copersistence property. Select  $\mathfrak{q} \in \mathrm{Ass}(I_\mathfrak{p}^{k+1})$, where  $k\geq 1$ and 
$\mathfrak{p}\in \mathrm{Ass}_R(R/I)$. By virtue of Lemma \ref{Ass-Local-Sharp}, we get $\mathfrak{q}=\mathfrak{p}'R_{\mathfrak{p}}$, where 
$\mathfrak{p}'\in \mathrm{Ass}(I^{k+1})$. Since   $I$ has  the copersistence property, this implies that $\mathfrak{p}'\in \mathrm{Ass}(I^{k})$. 
Once again, Lemma \ref{Ass-Local-Sharp} yields that $\mathfrak{q}=\mathfrak{p}'R_{\mathfrak{p}}\in \mathrm{Ass}(I_\mathfrak{p}^{k})$. 
Accordingly, $I_{\mathfrak{p}}$ has  the copersistence property.

The converse can be concluded by a similar argument. This finishes the proof. 
\end{proof}


Now, we turn our attention to the monomial localization. Let  $I\subset R=K[x_1, \ldots, x_n]$ be  a monomial ideal, where $K$ is a field. We  denote by $V^*(I)$ the set of monomial prime ideals containing $I$. Let $\mathfrak{p}=(x_{i_1}, \ldots, x_{i_r})$ be a monomial prime ideal. Then the  {\it monomial localization} of $I$ with respect to $\mathfrak{p}$, denoted by $I(\mathfrak{p})$, is the ideal in the polynomial ring $R(\mathfrak{p})=K[x_{i_1}, \ldots, x_{i_r}]$  which is obtained from $I$ by using the $K$-algebra homomorphism $R\rightarrow R(\mathfrak{p})$  with $x_j\mapsto 1$  for all $x_j\notin \{x_{i_1}, \ldots, x_{i_r}\}$. 

\bigskip
The following lemma and theorem are necessary for us to prove Theorem \ref{Th.Localization}. We insert  them here for ease of reference.

  \begin{lemma}\label{LEM.Localization} (\cite[Lemma 3.13]{SN})
Let $I$ and $J$ be two monomial ideals   in $R=K[x_1, \ldots, x_n]$, and $\mathfrak{p}$ be a monomial prime ideal of $R$. Then the following 
statements hold.
\begin {itemize}
\item[(i)]  $(I+J)(\mathfrak{p})= I(\mathfrak{p}) + J(\mathfrak{p})$;
\item[(ii)] $(IJ)(\mathfrak{p})= I(\mathfrak{p})  J(\mathfrak{p})$;
\item[(iii)] $(I\cap J)(\mathfrak{p})= I(\mathfrak{p}) \cap  J(\mathfrak{p})$;
\item[(iv)] $(I :_RJ)(\mathfrak{p})= (I(\mathfrak{p}) :_{R(\mathfrak{p})}  J(\mathfrak{p}))$;
\item[(v)]  If $Q$ is a $\mathfrak{q}$-primary monomial ideal in $R$ with $ \mathfrak{q}\subseteq \mathfrak{p}$, then $Q(\mathfrak{p})$ is a $\mathfrak{q}$-primary monomial ideal in $R(\mathfrak{p})$.
\end{itemize}
 \end{lemma}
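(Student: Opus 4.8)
The plan is to argue everything directly from the definition of the monomial localization as the extension of ideals along the $K$-algebra homomorphism $\phi\colon R\to R(\mathfrak p)$ with $\phi(x_{i_\ell})=x_{i_\ell}$ for the variables of $\mathfrak p=(x_{i_1},\dots,x_{i_r})$ and $\phi(x_j)=1$ for every other variable; thus $I(\mathfrak p)=\phi(I)R(\mathfrak p)$ is generated by $\{\phi(u):u\in\mathcal G(I)\}$. Writing $F=\{i_1,\dots,i_r\}$, the one computation underlying the whole lemma is that $\phi$ sends a monomial $u=\prod_j x_j^{a_j}$ to $\prod_{j\in F}x_j^{a_j}$, i.e. it truncates the exponent vector to its $F$-coordinates. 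Because $\mathrm{gcd}$, $\mathrm{lcm}$ and admissible division of monomials are the coordinatewise minimum, maximum and difference of exponent vectors, $\phi$ commutes with all three: $\phi(\mathrm{lcm}(u,v))=\mathrm{lcm}(\phi(u),\phi(v))$, $\phi(\mathrm{gcd}(u,v))=\mathrm{gcd}(\phi(u),\phi(v))$, and $\phi(u/w)=\phi(u)/\phi(w)$ whenever $w\mid u$. These identities will do all the work.

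Parts (i) and (ii) are purely formal: for any ring homomorphism the extension of ideals satisfies $(I+J)^e=I^e+J^e$ and $(IJ)^e=I^eJ^e$, and since $I(\mathfrak p)$ is by definition the extension of $I$ along $\phi$, both equalities follow without reference to the monomial structure. For (iii) I would start from the standard fact that $I\cap J$ is generated by $\{\mathrm{lcm}(u,v):u\in\mathcal G(I),\,v\in\mathcal G(J)\}$; applying $\phi$ to this (redundant but legitimate) generating set and using $\phi(\mathrm{lcm}(u,v))=\mathrm{lcm}(\phi(u),\phi(v))$ exhibits the same monomial generators for $(I\cap J)(\mathfrak p)$ and for $I(\mathfrak p)\cap J(\mathfrak p)$, giving the equality.

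Part (iv) then reduces to (iii). Writing $I:_R J=\bigcap_{v\in\mathcal G(J)}(I:_R v)$ with $I:_R v$ generated by $\{u/\mathrm{gcd}(u,v):u\in\mathcal G(I)\}$, the identity $\phi(u/\mathrm{gcd}(u,v))=\phi(u)/\mathrm{gcd}(\phi(u),\phi(v))$ gives $(I:_R v)(\mathfrak p)=I(\mathfrak p):_{R(\mathfrak p)}\phi(v)$, and pushing the localization through the finite intersection by (iii) yields $(I:_R J)(\mathfrak p)=\bigcap_v\big(I(\mathfrak p):_{R(\mathfrak p)}\phi(v)\big)=I(\mathfrak p):_{R(\mathfrak p)}J(\mathfrak p)$. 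For (v) I would invoke the combinatorial description of primary monomial ideals (cf. \cite{HH1}): a monomial ideal $Q$ is $\mathfrak q$-primary, with $\mathfrak q=(x_j:j\in G)$, exactly when $\mathrm{supp}(Q)\subseteq\{x_j:j\in G\}$ and $Q$ contains a pure power of each $x_j$ with $j\in G$. Since $\mathfrak q\subseteq\mathfrak p$ forces $G\subseteq F$, every generator of $Q$ is a monomial in the variables of $\mathfrak p$, so $\phi$ fixes each generator and $Q(\mathfrak p)$ is the very same ideal now read inside $R(\mathfrak p)=K[x_{i_1},\dots,x_{i_r}]$; both defining conditions persist verbatim, so $Q(\mathfrak p)$ is $\mathfrak q$-primary there.

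I expect the only real obstacle to be (iii), since intersection of ideals does not commute with extension along an arbitrary ring map, so the proof must genuinely use the $\mathrm{lcm}$-description of monomial intersections rather than any formal property of $\phi$. Once (iii) is established, (iv) is essentially a corollary, and (v) needs nothing beyond the standard classification of primary monomial ideals together with the observation that $\phi$ acts as the identity on monomials supported in $F$.
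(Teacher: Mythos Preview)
Your proof is correct. The paper itself does not prove this lemma: it is simply quoted as \cite[Lemma~3.13]{SN} and used as a black box, so there is no in-paper argument to compare against. Your approach---treating $I(\mathfrak p)$ as the extension along the $K$-algebra map $\phi$ that sets the variables outside $\mathfrak p$ to $1$, and then observing that $\phi$ commutes with $\mathrm{lcm}$, $\mathrm{gcd}$, and admissible division on monomials---is exactly the standard way to establish these identities, and each of your five steps goes through as written. In particular you are right to flag (iii) as the only place where something beyond formal properties of extension is needed, and the $\mathrm{lcm}$-description of monomial intersections handles it cleanly; (iv) and (v) then follow as you indicate.
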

 

\begin{theorem}\label{Localization}  (\cite[Theorem 3.14]{SN})
Let $I$ be a  monomial ideal in   a polynomial ring $R=K[x_1,\ldots, x_n]$, and $\mathfrak{p}\in V^*(I)$. Then 
$$\mathrm{Ass}_{R(\mathfrak{p})}(R(\mathfrak{p})/I(\mathfrak{p}))=\{\mathfrak{q}~:~ \mathfrak{q}\in \mathrm{Ass}_{R}(R/I)~\mathrm{and}~ \mathfrak{q} \subseteq \mathfrak{p}\}.$$
\end{theorem}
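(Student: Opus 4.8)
The plan is to transport an irredundant primary decomposition of $I$ through the monomial localization operation, with Lemma~\ref{LEM.Localization} doing the heavy lifting. First I would fix an irredundant primary decomposition $I = Q_1 \cap \cdots \cap Q_m$ in which every $Q_i$ is a $\mathfrak{q}_i$-primary monomial ideal; such a decomposition exists because $I$ is a monomial ideal, and then $\mathrm{Ass}_R(R/I) = \{\mathfrak{q}_1, \ldots, \mathfrak{q}_m\}$. Applying Lemma~\ref{LEM.Localization}(iii) repeatedly yields $I(\mathfrak{p}) = Q_1(\mathfrak{p}) \cap \cdots \cap Q_m(\mathfrak{p})$ inside $R(\mathfrak{p})$.

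Second, I would inspect each factor $Q_i(\mathfrak{p})$ according to whether $\mathfrak{q}_i \subseteq \mathfrak{p}$. If $\mathfrak{q}_i \not\subseteq \mathfrak{p}$, pick a variable $x_j \in \mathrm{supp}(\mathfrak{q}_i)$ with $x_j \notin \{x_{i_1}, \ldots, x_{i_r}\}$; since $\sqrt{Q_i} = \mathfrak{q}_i$, some power $x_j^t$ lies in $Q_i$, and the defining $K$-algebra homomorphism $R \to R(\mathfrak{p})$ sends $x_j$ to $1$, so $1 \in Q_i(\mathfrak{p})$, i.e. $Q_i(\mathfrak{p}) = R(\mathfrak{p})$ and this factor disappears. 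If $\mathfrak{q}_i \subseteq \mathfrak{p}$, then all variables of $\mathfrak{q}_i$ lie among $x_{i_1}, \ldots, x_{i_r}$, so $\mathfrak{q}_i(\mathfrak{p}) = \mathfrak{q}_i$, and Lemma~\ref{LEM.Localization}(v) shows that $Q_i(\mathfrak{p})$ is $\mathfrak{q}_i$-primary. Hence $I(\mathfrak{p}) = \bigcap_{\mathfrak{q}_i \subseteq \mathfrak{p}} Q_i(\mathfrak{p})$ is a primary decomposition whose prime radicals are exactly the $\mathfrak{q}_i$ with $\mathfrak{q}_i \subseteq \mathfrak{p}$; this already gives the inclusion $\mathrm{Ass}_{R(\mathfrak{p})}(R(\mathfrak{p})/I(\mathfrak{p})) \subseteq \{\mathfrak{q} \in \mathrm{Ass}_R(R/I) : \mathfrak{q} \subseteq \mathfrak{p}\}$.

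The step I expect to be most delicate is the reverse inclusion: localizing can turn an irredundant decomposition into a redundant one, so the surviving components $Q_i(\mathfrak{p})$ need not witness their own radicals, and one cannot simply invoke irredundancy. To get around this I would argue with colon ideals. Fix $\mathfrak{q}_i \in \mathrm{Ass}_R(R/I)$ with $\mathfrak{q}_i \subseteq \mathfrak{p}$; by the standard description of associated primes of monomial ideals there is a monomial $w \in R$ with $(I :_R w) = \mathfrak{q}_i$. Applying Lemma~\ref{LEM.Localization}(iv) to $I$ and the principal ideal $(w)$ gives $(I :_R w)(\mathfrak{p}) = \bigl( I(\mathfrak{p}) :_{R(\mathfrak{p})} (w)(\mathfrak{p}) \bigr)$; the left-hand side equals $\mathfrak{q}_i(\mathfrak{p}) = \mathfrak{q}_i$, while $(w)(\mathfrak{p})$ is the principal ideal generated by the image $\bar w$ of $w$. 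Thus $\bigl( I(\mathfrak{p}) :_{R(\mathfrak{p})} \bar w \bigr) = \mathfrak{q}_i$, which exhibits $\mathfrak{q}_i$ as an associated prime of $R(\mathfrak{p})/I(\mathfrak{p})$. Combining the two inclusions proves the equality.

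As a sanity check and an alternative proof worth recording, one may observe that if $S \subseteq R$ is the multiplicatively closed set generated by the variables outside $\mathfrak{p}$, then $S^{-1}R$ is a Laurent polynomial ring over $R(\mathfrak{p})$ with $S^{-1}I$ the extension of $I(\mathfrak{p})$; computing $\mathrm{Ass}(S^{-1}R/S^{-1}I)$ via Lemma~\ref{Ass-Local-Sharp} on one side (using that a monomial prime $\mathfrak{q}$ meets $S$ precisely when $\mathfrak{q} \not\subseteq \mathfrak{p}$) and via the behaviour of associated primes under Laurent polynomial extension on the other yields the same formula. I would keep the primary-decomposition argument as the main proof and mention this route only in passing.
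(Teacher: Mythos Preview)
The paper does not supply its own proof of this theorem: it is quoted verbatim as \cite[Theorem 3.14]{SN} and used as a black box. So there is no in-paper argument to compare against.

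That said, your proof is correct. The primary-decomposition transport via Lemma~\ref{LEM.Localization}(iii),(v) is the natural route, and you are right to flag that the surviving decomposition $\bigcap_{\mathfrak{q}_i\subseteq\mathfrak{p}} Q_i(\mathfrak{p})$ need not \emph{a priori} be irredundant; handling the reverse inclusion via Lemma~\ref{LEM.Localization}(iv) applied to a witness monomial $w$ with $(I:_R w)=\mathfrak{q}_i$ cleanly sidesteps that issue. The only small remark is that the colon computation already forces $\bar w\notin I(\mathfrak{p})$, since $\mathfrak{q}_i$ is a proper ideal of $R(\mathfrak{p})$, so no separate check is needed there. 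Your alternative route through $S^{-1}R$ and Lemma~\ref{Ass-Local-Sharp} is also sound and is essentially how one would expect the cited reference to argue.
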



The next theorem declares that  if a monomial ideal has the copersistence property,  then any monomial localization of it has the copersistence property as well.

\begin{theorem}\label{Th.Localization}
Let $I \subset R=K[x_1, \ldots, x_n]$  be a  monomial ideal. If  $I$ has the copersistence property, then   $I(\mathfrak{p})$  has the copersistence property.
\end{theorem}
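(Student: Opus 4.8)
The plan is to mimic the proofs of Theorems \ref{Th.Expansion} and \ref{Th.Weighting}: translate the copersistence condition through the monomial-localization operation using its compatibility with powers (Lemma \ref{LEM.Localization}(ii)) and the explicit description of associated primes of a monomial localization (Theorem \ref{Localization}). The one genuine subtlety, compared with the expansion and weighting cases, is that monomial localization is only defined when the given prime actually contains the relevant ideal, so one must be careful about which primes $\mathfrak{p}$ are eligible at each power $I^s$; this is where most of the bookkeeping will go.

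First I would fix $\mathfrak{p}$ a monomial prime ideal of $R$ with $I \subseteq \mathfrak{p}$ (if $I \not\subseteq \mathfrak{p}$ then $I(\mathfrak{p}) = R(\mathfrak{p})$ and the statement is vacuous/trivial, so I would dispose of that case first). Note that $\mathfrak{p} \in V^*(I)$ forces $\mathfrak{p} \in V^*(I^s)$ for every $s \geq 1$, since $I^s \subseteq I \subseteq \mathfrak{p}$, so Theorem \ref{Localization} applies to every power $I^s$ simultaneously. Next, fix $s \geq 1$ and take $\mathfrak{q} \in \mathrm{Ass}_{R(\mathfrak{p})}(R(\mathfrak{p})/I(\mathfrak{p})^{s+1})$. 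By Lemma \ref{LEM.Localization}(ii) we have $I(\mathfrak{p})^{s+1} = (I^{s+1})(\mathfrak{p})$, so $\mathfrak{q} \in \mathrm{Ass}_{R(\mathfrak{p})}(R(\mathfrak{p})/(I^{s+1})(\mathfrak{p}))$, and Theorem \ref{Localization} (applied to $I^{s+1}$) gives that $\mathfrak{q} \in \mathrm{Ass}_R(R/I^{s+1})$ with $\mathfrak{q} \subseteq \mathfrak{p}$. Since $I$ has the copersistence property, $\mathfrak{q} \in \mathrm{Ass}_R(R/I^s)$, and as we still have $\mathfrak{q} \subseteq \mathfrak{p}$, Theorem \ref{Localization} (applied to $I^s$) yields $\mathfrak{q} \in \mathrm{Ass}_{R(\mathfrak{p})}(R(\mathfrak{p})/(I^s)(\mathfrak{p}))$. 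Finally Lemma \ref{LEM.Localization}(ii) again gives $(I^s)(\mathfrak{p}) = I(\mathfrak{p})^s$, so $\mathfrak{q} \in \mathrm{Ass}_{R(\mathfrak{p})}(R(\mathfrak{p})/I(\mathfrak{p})^s)$. Hence $\mathrm{Ass}_{R(\mathfrak{p})}(R(\mathfrak{p})/I(\mathfrak{p})^s) \supseteq \mathrm{Ass}_{R(\mathfrak{p})}(R(\mathfrak{p})/I(\mathfrak{p})^{s+1})$ for all $s \geq 1$, i.e. $I(\mathfrak{p})$ has the copersistence property.

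The main thing to watch is that, unlike expansion and weighting (which are bijective on associated primes), monomial localization only retains the primes contained in $\mathfrak{p}$, and the operation $I \mapsto I(\mathfrak{p})$ is defined relative to that fixed $\mathfrak{p}$. So the key point making the argument go through is simply the monotonicity $I^{s+1} \subseteq I^s \subseteq \mathfrak{p}$, ensuring $\mathfrak{p}$ is a valid localizing prime for every power at once, together with the fact that the condition ``$\mathfrak{q} \subseteq \mathfrak{p}$'' is preserved verbatim when we pass from $I^{s+1}$ to $I^s$ via copersistence. Unlike the earlier theorems, only one direction is true in general (localization can destroy higher associated primes), so I would not claim an ``if and only if'' here — the statement as given is one-directional, and that is the right level of generality.

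I would also remark, if a converse-type or sharper statement is wanted, that one cannot in general recover the copersistence property of $I$ from that of all its monomial localizations, because the behavior at $\mathfrak{p} = \mathfrak{m}$ (the full maximal monomial prime in $\mathrm{supp}(I)$) already encodes the whole ideal; but the present statement needs nothing of the sort, so I would stop after establishing the displayed inclusion for each $s$.
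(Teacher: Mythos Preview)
Your proof is correct and follows essentially the same argument as the paper: pick $\mathfrak{q}\in\mathrm{Ass}_{R(\mathfrak{p})}(R(\mathfrak{p})/I(\mathfrak{p})^{s+1})$, use Lemma~\ref{LEM.Localization}(ii) to identify $I(\mathfrak{p})^{s+1}=I^{s+1}(\mathfrak{p})$, apply Theorem~\ref{Localization} to pull $\mathfrak{q}$ back to $\mathrm{Ass}_R(R/I^{s+1})$ with $\mathfrak{q}\subseteq\mathfrak{p}$, invoke copersistence of $I$, and then push forward again. Your added remarks on the trivial case $I\not\subseteq\mathfrak{p}$ and on why $\mathfrak{p}\in V^*(I^s)$ for all $s$ are good hygiene that the paper leaves implicit, but the substance of the argument is the same.
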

\begin{proof}
Suppose that $I$ has the copersistence property. Let $s\geq 1$ and take 
$\mathfrak{q} \in  \mathrm{Ass}_{R(\mathfrak{p})}(R(\mathfrak{p})/I(\mathfrak{p})^{s+1})$. It follows from Lemma \ref{LEM.Localization}(ii) that 
 $I(\mathfrak{p})^{s+1}=I^{s+1}(\mathfrak{p})$. Hence, we get   $\mathfrak{q} \in  \mathrm{Ass}_{R(\mathfrak{p})}(R(\mathfrak{p})/I^{s+1}(\mathfrak{p}))$.   By virtue of   Theorem \ref{Localization}, we have $\mathfrak{q}\in \mathrm{Ass}_R(R/I^{s+1})$ and $\mathfrak{q} \subseteq \mathfrak{p}$. 
 Since $I$ has the copersistence property, this implies that $\mathfrak{q}\in \mathrm{Ass}_R(R/I^s)$. Due to  $\mathfrak{q}\in \mathrm{Ass}_R(R/I^s)$ and $\mathfrak{q} \subseteq \mathfrak{p}$, Theorem  \ref{Localization} allows us to conclude that  
$\mathfrak{q} \in  \mathrm{Ass}_{R(\mathfrak{p})}(R(\mathfrak{p})/I^s(\mathfrak{p}))$.  Once again,  Lemma \ref{LEM.Localization}(ii) yields  that 
 $I(\mathfrak{p})^{s}=I^{s}(\mathfrak{p})$. Consequently, we can derive  that   $I(\mathfrak{p})$ has the copersistence property, as required.
\end{proof}


In the following corollary, we provide an advantageous  application of Theorem  \ref{Th.Localization}. In fact, Theorem \ref{Th.Localization} enables us  to refute the
 copersistence property for some monomial ideals, and  assists  us in constructing  new counterexamples derived from  some well-known counterexamples.
 
 \begin{corollary}
Let $I$ be a monomial ideal in $R_1=K[x_1, \ldots, x_s]$ with $\mathcal{G}(I)=\{u_1, \ldots, u_m\}$, and $J_1, \ldots, J_m$ be  some monomial ideals    in $R_2=K[x_{s+1}, \ldots, x_n]$.  If $I$ does  not have the  copersistence property, then $L:=u_1J_1R+\cdots + u_mJ_mR$ does not have the copersistence
  property, where $R=K[x_1, \ldots, x_n]$.
\end{corollary}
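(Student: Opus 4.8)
The plan is to reduce the statement about $L$ to the known failure of copersistence for $I$ by exhibiting a monomial localization of $L$ that recovers $I$ (up to the isomorphism allowed by monomial relabelling), and then invoke the contrapositive of Theorem \ref{Th.Localization}. First I would choose the monomial prime ideal $\mathfrak{p}:=(x_1,\ldots,x_s)\subset R$. By construction every generator $u_jv$ of $L$, with $v\in\mathcal{G}(J_j)$, involves the variables $x_{s+1},\ldots,x_n$ only through the factor $v$, and $v$ itself lies in $R_2$; hence applying the $K$-algebra homomorphism $R\to R(\mathfrak{p})=K[x_1,\ldots,x_s]$ that sends $x_{s+1},\ldots,x_n\mapsto 1$ sends $u_jv\mapsto u_j$. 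Therefore $L(\mathfrak{p})=(u_1,\ldots,u_m)R(\mathfrak{p})=I$ (here I should note $\mathfrak{p}\in V^*(L)$, since each $u_j\in\mathfrak{p}$, so the monomial localization and Theorem \ref{Th.Localization} genuinely apply).

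Next I would argue by contradiction: suppose $L$ \emph{does} have the copersistence property. Then Theorem \ref{Th.Localization} gives that $L(\mathfrak{p})$ has the copersistence property. But $L(\mathfrak{p})=I$ as ideals of $R(\mathfrak{p})=R_1$, so $I$ has the copersistence property, contradicting the hypothesis. This yields the claim.

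The one point that needs a little care — and the main (mild) obstacle — is verifying cleanly that $L(\mathfrak{p})$ equals $I$ on the nose rather than merely being isomorphic to it: one must check that the minimal generating set of $L(\mathfrak{p})$ is exactly $\{u_1,\ldots,u_m\}$, i.e. that no $u_j$ becomes redundant or degenerates after setting the $R_2$-variables to $1$. Since each $v\in\mathcal{G}(J_j)$ is a monomial supported in $\{x_{s+1},\ldots,x_n\}$, it maps to the constant monomial $1$, so $u_jv\mapsto u_j$ and the image ideal is literally $(u_1,\ldots,u_m)$; this is generated by a subset of $\mathcal{G}(I)$, hence equals $I$ and has the same minimal generators. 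With that observation in hand the rest is the formal contrapositive argument above. (Alternatively, if one wanted to avoid even this bookkeeping, one could phrase the conclusion via Proposition \ref{Multiple}-style reasoning, but the monomial-localization route is the most direct.)
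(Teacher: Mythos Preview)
Your proof is correct and follows essentially the same approach as the paper: argue by contradiction, apply monomial localization at a prime containing $\mathrm{supp}(I)$ but none of the $R_2$-variables, invoke Theorem \ref{Th.Localization}, and identify $L(\mathfrak{p})$ with $I$. The only cosmetic difference is that the paper takes $\mathfrak{p}=(\bigcup_{i=1}^m\mathrm{supp}(u_i))R$ rather than your $\mathfrak{p}=(x_1,\ldots,x_s)$, which amounts to the same thing; your choice has the minor advantage that $R(\mathfrak{p})=R_1$ on the nose.
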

 
 \begin{proof}
 Suppose, on the contrary, that $L$ has the copersistence property.   Set  $\mathfrak{p}:= (\bigcup_{i=1}^m\mathrm{supp}(u_i))R$. It is clear  that $\mathfrak{p}$ is a  monomial  prime  ideal of  $R$,  and also $L\subseteq \mathfrak{p}$. It follows now from   Theorem \ref{Th.Localization} that  $L(\mathfrak{p})$ 
has the copersistence property.  Due to  $L(\mathfrak{p})=I$, we can deduce that  $I$ has the copersistence property, a contradiction. Consequently, we get
   $L$ does not have the copersistence property, and the proof is complete. 
\end{proof}


 To formulate the next result, we require to recollect  the definition of the contraction operation. Assume that $I$ is a monomial ideal in $R=K[x_1, \ldots, x_n]$ with $\mathcal{G}(I)=\{u_1, \ldots, u_m\}$. For some $1\leq j \leq n$,  the  \textit{contraction} of $x_j$  from  $I$, denoted by $I/x_j$, is obtained by setting $x_j=1$ in  $u_i$ for each $i=1, \ldots, m$.   Because  the  contraction $I/x_j$ is exactly the monomial localization of $I$ with respect to $\mathfrak{p}=\mathfrak{m}\setminus\{x_j\}$, where $\mathfrak{m}=(x_1, \ldots, x_n)$ is the graded maximal ideal of $R=K[x_1,\ldots, x_n]$, then
 according to  Theorem \ref{Th.Localization}, we can promptly deduce  the following result.  
 
 \begin{corollary}\label{Cor. contraction}
 Let $I$ be  a    monomial ideal in  $R=K[x_1, \ldots, x_n]$, and     $1\leq j \leq n$.   If  $I$ has the copersistence property, then   $I/x_j$  has the copersistence property.
 \end{corollary}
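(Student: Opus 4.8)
The plan is to reduce the statement about contraction directly to the localization result, Theorem~\ref{Th.Localization}, exactly as the surrounding text suggests. First I would fix the graded maximal ideal $\mathfrak{m}=(x_1,\ldots,x_n)$ of $R=K[x_1,\ldots,x_n]$ and observe that $\mathfrak{p}:=\mathfrak{m}\setminus\{x_j\}=(x_1,\ldots,x_{j-1},x_{j+1},\ldots,x_n)$ is a monomial prime ideal of $R$, so that the monomial localization $I(\mathfrak{p})$ is defined and lives in $R(\mathfrak{p})=K[x_1,\ldots,x_{j-1},x_{j+1},\ldots,x_n]$. The key observation is then that the monomial localization homomorphism $R\to R(\mathfrak{p})$ sending $x_\ell\mapsto 1$ for every variable $x_\ell\notin\{x_1,\ldots,x_{j-1},x_{j+1},\ldots,x_n\}$ is precisely the map that sets $x_j=1$; hence, by the very definitions of contraction and monomial localization, we have the equality $I/x_j = I(\mathfrak{p})$ of ideals in $R(\mathfrak{p})$.

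Once this identification is in place, the corollary is immediate: if $I$ has the copersistence property, then Theorem~\ref{Th.Localization} (which asserts that monomial localization preserves the copersistence property) gives that $I(\mathfrak{p})$ has the copersistence property, and since $I/x_j = I(\mathfrak{p})$, we conclude that $I/x_j$ has the copersistence property. A minor point to check is the hypothesis of Theorem~\ref{Th.Localization}: that theorem is stated for an arbitrary monomial prime ideal $\mathfrak{p}$ of $R$ (its proof only invokes Theorem~\ref{Localization} with $\mathfrak{p}\in V^*(I(\mathfrak{p})^{s+1})$, which holds automatically since one only considers associated primes contained in $\mathfrak{p}$), so no containment assumption $I\subseteq\mathfrak{p}$ is actually needed; I would simply apply it verbatim.

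Because this is a one-line deduction from an already-established theorem, there is no substantive obstacle. The only thing requiring a modicum of care is making the identification $I/x_j = I(\mathfrak{p})$ genuinely precise at the level of minimal generating sets: if $\mathcal{G}(I)=\{u_1,\ldots,u_m\}$, then $I/x_j$ is generated by the monomials obtained from the $u_i$ by deleting all powers of $x_j$, while $I(\mathfrak{p})$ is generated by the images of the $u_i$ under the $K$-algebra map that kills exactly the variables outside $\mathrm{supp}(\mathfrak{p})$, i.e. kills exactly $x_j$; these two operations coincide, so the generating sets agree and the ideals are equal. With that spelled out, the proof is complete.

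\begin{proof}
Let $\mathfrak{m}=(x_1,\ldots,x_n)$ be the graded maximal ideal of $R$ and set
$$\mathfrak{p}:=\mathfrak{m}\setminus\{x_j\}=(x_1,\ldots,x_{j-1},x_{j+1},\ldots,x_n),$$
which is a monomial prime ideal of $R$. By definition, the monomial localization $I(\mathfrak{p})$ is the image of $I$ in $R(\mathfrak{p})=K[x_1,\ldots,x_{j-1},x_{j+1},\ldots,x_n]$ under the $K$-algebra homomorphism $R\to R(\mathfrak{p})$ that maps $x_\ell\mapsto 1$ for every $x_\ell\notin\{x_1,\ldots,x_{j-1},x_{j+1},\ldots,x_n\}$; the only such variable is $x_j$, so this map is exactly the substitution $x_j=1$. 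Comparing with the definition of the contraction $I/x_j$, we obtain $I/x_j=I(\mathfrak{p})$ as ideals in $R(\mathfrak{p})$. Since $I$ has the copersistence property, Theorem~\ref{Th.Localization} implies that $I(\mathfrak{p})$ has the copersistence property, and therefore $I/x_j$ has the copersistence property, as required.
\end{proof}
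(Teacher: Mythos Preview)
Your proof is correct and follows exactly the paper's own approach: the paper observes (in the paragraph preceding the corollary) that $I/x_j$ is precisely the monomial localization $I(\mathfrak{p})$ with $\mathfrak{p}=\mathfrak{m}\setminus\{x_j\}$, and then deduces the result immediately from Theorem~\ref{Th.Localization}. You have simply spelled out this identification more explicitly.
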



\subsection{Copersistence property under deletion} $\newline$

We first recall the definition of the deletion operation. Assume that   $I\subset R=K[x_1, \ldots, x_n]$ is a  monomial ideal with 
$\mathcal{G}(I)=\{u_1, \ldots, u_m\}$.  Then the  \textit{deletion}  of $x_j$  from  $I$ with $1\leq j \leq n$, denoted by $I\setminus x_j$, is formed  by putting  $x_j=0$ in  $u_i$ for each $i=1, \ldots, m$.  

The following proposition states that if a square-free monomial ideal has the copersistence property, then it still retains the copersistence property under the deletion operation.

\begin{proposition}  \label{Pro.Deletion}
Let $I\subset  K[x_1, \ldots, x_n]$  be a square-free monomial ideal and $1\leq i \leq n$. If  $I$ has the copersistence property, then   $I\setminus x_i$ has  the  copersistence  property. 
\end{proposition}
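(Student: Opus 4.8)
The plan is to reduce the statement about the deletion $I \setminus x_i$ to a statement about a suitable power and then invoke the structural results already available in the paper. The key observation is that for a square-free monomial ideal $I$, the deletion $I \setminus x_i$ can be realized as a monomial localization: specifically, setting $x_i = 0$ in the generators discards precisely those generators divisible by $x_i$ and leaves the rest untouched, and this is the same thing as first passing to the ideal generated by the $x_i$-free part and then localizing. More cleanly, for square-free monomial ideals one has the well-known identity $(I \setminus x_i)^s = (I^s \setminus x_i)$ together with $\mathrm{Ass}_R(R/(I \setminus x_i)) $ being obtained from $\mathrm{Ass}_R(R/I)$ by intersecting with the subring $K[x_1,\ldots,\widehat{x_i},\ldots,x_n]$; the same compatibility holds at every power because deletion commutes with taking powers for square-free (indeed arbitrary) monomial ideals. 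So the first step is to record these two facts: $(I \setminus x_i)^s = I^s \setminus x_i$ and the relationship between $\mathrm{Ass}_R(R/(J \setminus x_i))$ and $\mathrm{Ass}_R(R/J)$ for an arbitrary monomial ideal $J$.

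Next I would make the associated-primes comparison explicit. For a monomial ideal $J \subset R = K[x_1,\ldots,x_n]$ and a variable $x_i$, one has that a monomial prime $\mathfrak{q} \subset K[x_1,\ldots,\widehat{x_i},\ldots,x_n]$ lies in $\mathrm{Ass}_{R'}(R'/(J \setminus x_i))$, where $R' = K[x_1,\ldots,\widehat{x_i},\ldots,x_n]$, if and only if $\mathfrak{q} \in \mathrm{Ass}_R(R/J)$ (regarding $\mathfrak{q}$ as a prime of $R$ not containing $x_i$), or possibly $\mathfrak{q}$ comes from some $\mathfrak{p} \in \mathrm{Ass}_R(R/J)$ with $x_i \in \mathfrak{p}$ via removing the variable $x_i$ — this is exactly the content analogous to Theorem~\ref{Localization} for the deletion operation, and the cleanest route is to cite the square-free deletion machinery of \cite{SN} (the same source as Theorem~\ref{Th.NTF.Deletion}). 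Once this dictionary is in place, the argument is the now-familiar bootstrap: fix $s \geq 1$ and take $\mathfrak{q} \in \mathrm{Ass}_{R'}(R'/(I\setminus x_i)^{s+1}) = \mathrm{Ass}_{R'}(R'/(I^{s+1}\setminus x_i))$. Lift $\mathfrak{q}$ to the corresponding $\mathfrak{p} \in \mathrm{Ass}_R(R/I^{s+1})$ under the dictionary; since $I$ has the copersistence property, $\mathfrak{p} \in \mathrm{Ass}_R(R/I^s)$; push back down through the dictionary to get $\mathfrak{q} \in \mathrm{Ass}_{R'}(R'/(I^s \setminus x_i)) = \mathrm{Ass}_{R'}(R'/(I\setminus x_i)^s)$. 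Hence $\mathrm{Ass}_{R'}(R'/(I\setminus x_i)^s) \supseteq \mathrm{Ass}_{R'}(R'/(I\setminus x_i)^{s+1})$ for all $s \geq 1$, i.e., $I \setminus x_i$ has the copersistence property.

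The main obstacle I anticipate is the lifting step, namely verifying that the passage $\mathfrak{p} \mapsto \mathfrak{q}$ between $\mathrm{Ass}_R(R/I^{s+1})$ and $\mathrm{Ass}_{R'}(R'/(I^{s+1}\setminus x_i))$ is well-behaved in both directions — in particular that an associated prime of the deletion always lifts to an associated prime of the original ideal, and not merely to an embedded-or-not ambiguity. For square-free ideals this is governed by the fact (implicit in \cite{SN}) that deletion corresponds to intersecting with the smaller polynomial ring together with the observation that minimal primes are preserved (so no spurious primes appear), and one has to be slightly careful that when $x_i \in \mathrm{supp}$ of a generator the corresponding primary components behave correctly. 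Since this bookkeeping has essentially been done in \cite{SN} to prove Theorem~\ref{Th.NTF.Deletion}, the cleanest presentation is to isolate it as the identity $\mathrm{Ass}_R(R/I^s) \supseteq \mathrm{Ass}_R(R/I^{s+1}) \Rightarrow \mathrm{Ass}_{R'}(R'/(I\setminus x_i)^s) \supseteq \mathrm{Ass}_{R'}(R'/(I\setminus x_i)^{s+1})$, and to cite the relevant deletion lemmas from \cite{SN} for the compatibility of $\mathrm{Ass}$ with deletion rather than reprove them.
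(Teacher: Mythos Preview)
Your proposed ``dictionary'' between $\mathrm{Ass}_R(R/I^s)$ and $\mathrm{Ass}_{R'}(R'/(I\setminus x_i)^s)$ does not exist in the form your argument requires, and this is a genuine gap. Deletion is \emph{not} analogous to monomial localization here: while $(I\setminus x_i)^s = I^s\setminus x_i$ does hold (for any monomial ideal, in fact), there is no clean correspondence of the shape ``$\mathfrak{q}\in\mathrm{Ass}_{R'}(R'/(J\setminus x_i))$ if and only if $\mathfrak{q}$ (or $\mathfrak{q}+(x_i)$) lies in $\mathrm{Ass}_R(R/J)$'' valid for an arbitrary monomial ideal $J$. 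In particular the push-down step fails: from $\mathfrak{p}\in\mathrm{Ass}_R(R/I^s)$ with $x_i\notin\mathfrak{p}$ one cannot conclude $\mathfrak{p}\in\mathrm{Ass}_{R'}(R'/(I\setminus x_i)^s)$. The example given immediately after the proposition, with $I=(x_1x_4,\,x_2^2,\,x_1x_2,\,x_3x_4^2,\,x_3^2x_4,\,x_2x_3x_4,\,x_1x_3^2)$, is an explicit witness: $I$ has the copersistence property and $(x_1,x_2,x_4)\in\mathrm{Ass}_R(R/I)=\mathrm{Ass}_R(R/I^2)$, yet $(x_1,x_2,x_4)\in\mathrm{Ass}_{R'}(R'/(I\setminus x_3)^2)\setminus\mathrm{Ass}_{R'}(R'/(I\setminus x_3))$. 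Since the only place you actually invoke square-freeness is for the commutation $(I\setminus x_i)^s=I^s\setminus x_i$---which holds regardless---your argument as written would apply verbatim to this ideal and prove a false statement. The reference \cite{SN} does not supply the missing dictionary: Theorem~\ref{Th.NTF.Deletion} asserts only that deletion preserves normal torsion-freeness, not any pointwise lifting of associated primes.

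The paper's proof uses square-freeness in a much stronger and earlier way. Since $I$ is square-free, $\mathrm{Ass}_R(R/I)=\mathrm{Min}(I)$, and then copersistence forces $\mathrm{Ass}_R(R/I^s)\subseteq\mathrm{Ass}_R(R/I)=\mathrm{Min}(I)$ for all $s$; that is, $I$ is already normally torsion-free (Proposition~\ref{Power}(iv)). Now Theorem~\ref{Th.NTF.Deletion} gives that $I\setminus x_i$ is normally torsion-free as well, so $\mathrm{Ass}_{R'}(R'/(I\setminus x_i)^k)=\mathrm{Min}(I\setminus x_i)$ is constant in $k$ and copersistence is immediate. No lifting dictionary is needed.
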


\begin{proof}
Assume that $I$ has the copersistence property. Set $L:=I\setminus x_i$ and $S=R\setminus x_i$. Since $I$ is square-free, Proposition \ref{Power}(iv) implies that $I$ is normally torsion-free. In light of Theorem \ref{Th.NTF.Deletion}, we can deduce that  $L$ in $S$ is normally torsion-free,  and hence    $\mathrm{Ass}_S(S/L^k)=\mathrm{Ass}_S(S/L)=\mathrm{Min}(L)$ for all $k\geq 1$. This gives that $L$ has the copersistence property, as claimed.
\end{proof}

It should be observed  that we cannot remove the word  ``square-free" in Proposition \ref{Pro.Deletion}. We provide such a counterexample in the 
next  example.

\begin{example}
\em{
Let $I=(x_1x_4, x_2^2, x_1x_2, x_3x_4^2, x_3^2x_4, x_2x_3x_4, x_1x_3^2)$ be a monomial ideal in $R=K[x_1, x_2, x_3, x_4]$. 
We first show that $I$ has the copersistence property.  Using  \textit{Macaulay2} \cite{GS}, we detect that  $I$, $I^2$, and $I^3$ are integrally closed. 
 Since $R=K[x_1, x_2, x_3, x_4]$ is a polynomial ring in four  variables,  we can deduce from  Proposition  \ref{Roberts-Normal}  that $I$ is a normal monomial ideal. 
 Now, by the proof of Proposition  \ref{Power}(v), we derive that $I$ has the  persistence property. Furthermore, Using  \textit{Macaulay2} \cite{GS}, we get 
$$\mathrm{Ass}_R(R/I)=\{(x_1,x_2, x_3), (x_1,x_2,x_4), (x_2,x_3,x_4), (x_1,x_2,x_3,x_4)\}.$$ 
Since $I$ has the persistence property  and  $(x_1,x_2,x_3,x_4)\in \mathrm{Ass}_R(R/I)$, this implies that $(x_1,x_2,x_3,x_4)\in \mathrm{Ass}_R(R/I^k)$ for all $k\geq 1$. In addition, due to $$\mathrm{Min}(I^k)=\mathrm{Min}(I)=\{(x_1,x_2, x_3), (x_1,x_2,x_4), (x_2,x_3,x_4) \text{ for all } k\geq  1,$$ 
 this gives   $\{(x_1,x_2, x_3), (x_1,x_2,x_4), (x_2,x_3,x_4)\} \subseteq \mathrm{Ass}_R(R/I^k)$ for all $k\geq 1$. Consequently, we obtain $\mathrm{Ass}_R(R/I^k)=\mathrm{Ass}_R(R/I)=\mathrm{Min}(I)$ for all $k\geq 1$. 
This means that $I$ has the copersistence property. Now, one can easily check that   $L:=I\setminus x_3=(x_1x_4, x_2^2, x_1x_2)$. 
 Using  \textit{Macaulay2} \cite{GS}, we get  $(x_1, x_2, x_4) \in \mathrm{Ass}(L^2)\setminus \mathrm{Ass}(L).$
This yields that $I\setminus x_3$ does not have the copersistence property. This finishes our argument.
}
\end{example}


\subsection{Copersistence property under polarization} $\newline$

First, we start by reviewing the definition of the polarization operation.

\begin{definition}
\em{ 
Let $I$ be a monomial ideal in a polynomial ring $R=K[x_1, \ldots, x_n]$.  The process of  {\it  polarization}   replaces a power $x_i^t$
by a product of $t$ new variables $x_{(i,1)}\cdots x_{(i,t)}$, where  each  $x_{(i,j)}$ is called a  {\it shadow} of $x_i$. In particular, we  use $\widetilde{I^t}$ to denote the polarization of $I^t$, and  $S_t$ for the new polynomial ring in this polarization, and also we  use $\widetilde{w} $ to denote the polarization in $S_t$ of a monomial $w$ in $R=K[x_1, \ldots, x_n]$. Moreover, the {\it depolarization} of a monomial  ideal in $S_t$ is obtained by putting  $x_{(i,j)}=x_i$ for all $i,j$.
}
\end{definition}

In what follows, in Example \ref{Exam.Polarization.1}, we present a monomial ideal $I$ which has  the  copersistence  property, but  $\widetilde{I}$ 
does not have  the copersistence property. Next, in Example \ref{Exam.Polarization.2}, we give a monomial ideal  $L$ which does not have the copersistence property, 
but  $\widetilde{L}$ has the copersistence property. 


\begin{example} \label{Exam.Polarization.1}
\em{
Let $R=K[x,y,z,t]$ be the polynomial ring over a field $K$ and $I=(x^4, y^4z, x^3y, xy^3, x^2y^2zt)$.   One can easily check  that 
$$I=(x,y)^{4} \cap (x,z) \cap (x^3, y^3,z) \cap (x^3,y^3,t).$$
Hence, we get $\mathrm{Ass}_R(R/I)=  \{(x,y), (x,z), (x,y,z), (x,y,t)\}$.  Now, we claim that 
$\mathrm{Ass}_R(R/I^s)=  \{(x,y), (x,z), (x,y,z)\}$ for all $s\geq 2$. First note that since $(x,y), (x,z) \in \mathrm{Min}(I)$, this implies that 
$(x,y), (x,z) \in\mathrm{Ass}_R(R/I^s)$  for all $s\geq 2$. 
We here verify that $t\notin \mathrm{supp}(I^s)$ for all $s\geq 2$. Take an arbitrary monomial 
$u\in \mathcal{G}(I^s)$, where $s\geq 2$.  Then there exist nonnegative integers $\ell_i$ ($i=1, \ldots, 5$) with $\sum_{i=1}^{5}\ell_i=s$ such that 
$$u=(x^4)^{\ell_1} (y^4z)^{\ell_2}  (x^3y)^{\ell_3}  (xy^3)^{\ell_4} (x^2y^2zt)^{\ell_5}.$$  We want to demonstrate that  there 
exists some monomial $v\in I^{s}$ with  $t \nmid v$ such that $v \mid u$.
If $\ell_5=0$, then there is  nothing to show. Let $\ell_5\geq 1$. Now,  we may consider the following cases:
  
\bigskip
\textbf{Case 1.} $\ell_5=2k$ for some $k\geq 1$.  Then it is not hard to check  that 
$$(x^4)^{\ell_1+k} (y^4z)^{\ell_2+k}  (x^3y)^{\ell_3}  (xy^3)^{\ell_4} \mid u.$$

 \textbf{Case 2.} $\ell_5=2k+1$ for some $k\geq 0$. It is easy to investigate the following statements
  \begin{itemize}
\item[(1)]   $x^4(x^2y^2z)=(x^3y)^2z\in I^2$.
\item[(2)]   $(y^4z)(x^2y^2z)=(xy^3)^2z^2\in I^2$.
\item[(3)]    $(x^3y)(x^2y^2z)=x^4(xy^3)z\in I^2$.
\item[(4)]    $(xy^3)(x^2y^2z)=(x^3y)(y^4z)\in I^2$. 
\end{itemize}
 
Put $u_1:=(x^4)^{\ell_1} (y^4z)^{\ell_2}  (x^3y)^{\ell_3}  (xy^3)^{\ell_4} (x^2y^2zt)^{2k}$. Then $u=u_1(x^2y^2zt)$. 
According to Case 1, there exists some monomial $v_1\in I^{s-1}$ with  $t \nmid v_1$ such that $v_1 \mid u_1$. It follows now from the above 
statements that $v_1(x^2y^2z)\in I^s$ with  $t\nmid v_1(x^2y^2z)$ such that $v_1(x^2y^2z) \mid u$.   

 Consequently, we get $t\notin \mathrm{supp}(I^s)$ for all $s\geq 2$. From Proposition \ref{Pro.supp}, we can conclude that, for all $s\geq 2$, 
 we have $(x,y,z,t) \notin \mathrm{Ass}_R(R/I^s)$, $(x,y,t) \notin \mathrm{Ass}_R(R/I^s)$, $(x,z,t) \notin \mathrm{Ass}_R(R/I^s)$, 
 and $(y,z,t) \notin \mathrm{Ass}_R(R/I^s)$.
 
 We here establish $(x,y,z) \in\mathrm{Ass}_R(R/I^s)$  for all $s\geq 2$. To do this, let $s\geq 2$ and  set 
$h:=x^{s+1}y^{3s-1}$. We show that $(I^s:h)=(x,y,z)$. According to the following statements

  \begin{itemize}
\item[(1)]   $xh=x^{s+2}y^{3s-1}=(x^3y)(xy^3)^{s-1}y\in I^s$;
\item[(2)]   $yh=x^{s+1}y^{3s}=x(xy^3)^s \in I^s$; 
\item[(3)]   $zh=x^{s+1}y^{3s-1}z=(x^3y)(y^4z)(xy^3)^{s-2}\in I^s$,
\end{itemize}
we can derive that $(x,y,z)\subseteq (I^s:h)$.  On the other hand, since $t\notin \mathrm{supp}(I^s)$ for all $s\geq 2$ and $(x,y), (x,z) \in\mathrm{Ass}_R(R/I^s)$  for all $s\geq 2$, we can rapidly deduce that $\mathrm{supp}(I^s)=\{x,y,z\}$ for all $s\geq 2$. In particular, this implies that $(I^s:h)\subseteq (x,y,z)$, and so $(I^s:h)=(x,y,z)$ for all $s\geq 2$.  

  Hence,  $\mathrm{Ass}_R(R/I^s)=  \{(x,y), (x,z), (x,y,z)\}$ for all $s\geq 2$. Therefore, we obtain    $\mathrm{Ass}_R(R/I^s) \supseteq  \mathrm{Ass}_R(R/I^{s+1})$ for all $s\geq 1$. This gives rise to   $I$ has the copersistence property.

In what follows, our aim is to show that  $\widetilde{I}$ does not have the copersistence property. To accomplish this,
 let $S=K[x_1, x_2, x_3, x_4, y_1,y_2, y_3, y_4, z_1, t_1]$  such that $\widetilde{x^4}=x_1x_2x_3x_4$, 
 $\widetilde{y^4}=y_1y_2y_3y_4$, $\widetilde{z}=z_1$, and $\widetilde{t}=t_1$.
    It is routine to investigate that  the polarization of $I$ in $S$ is as follows
$$\widetilde{I}=(x_1x_2x_3x_4, y_1y_2y_3y_4z_1, x_1x_2x_3y_1,  x_1y_1y_2y_3, x_1x_2y_1y_2z_1t_1).$$
 Using \textit{Macaulay2} \cite{GS}, we obtain
  $(x_1, x_3, y_3, z_1)\in \mathrm{Ass}_S(S/(\widetilde{I})^2) \setminus \mathrm{Ass}_S(S/\widetilde{I}).$
 This implies that $\widetilde{I}$ does not have the copersistence property, as claimed. 
 }
\end{example}

\begin{example} \label{Exam.Polarization.2}
\em{
Assume that $R=K[x_1, x_2, x_3, x_4]$ is the polynomial ring over a field $K$ and   $L=(x_1^3x_2, x_1x_2^3, x_2^4, x_1^4x_3, x_1^4x_4)$. 
 Using \textit{Macaulay2} \cite{GS}, we get 
  $(x_1, x_2, x_3, x_4)\in \mathrm{Ass}_R(R/L^2) \setminus \mathrm{Ass}_R(R/L).$  This gives that $L$  does not have the copersistence property. 
  
  From now on, we want to show that  $\widetilde{L}$ has  the copersistence property. 
 For this purpose, let $S=K[x_1, x_2, x_3, x_4, x_5, x_6, x_7, x_8, x_9, x_{10}]$ such that $\widetilde{x^4_1}=x_1x_2x_3x_4$, 
 $\widetilde{x^4_2}=x_5x_6x_7x_8$, $\widetilde{x_3}=x_9$, and $\widetilde{x_4}=x_{10}$. 
  Then  the polarization of $L$ in $S$ is as follows
 $$\widetilde{L}=(x_1x_2x_3x_5, x_1x_5x_6x_7, x_5x_6x_7x_8,  x_1x_2x_3x_4x_9, x_1x_2x_3x_4x_{10}).$$
 To simplify the notation, set $J:=\widetilde{L}$.  Using \textit{Macaulay2} \cite{GS}, we obtain 
 \begin{align*}
 \mathrm{Ass}_S(S/J)=\{&(x_1,x_5), (x_1,x_6), (x_1,x_7), (x_1, x_8), (x_2,x_5), (x_2,x_6), (x_2,x_7), \\  
 & (x_3,x_5),  (x_3,x_6), (x_3,x_7), (x_4,x_5),  (x_5, x_9, x_{10})\}.
 \end{align*}
We here  claim that $J$ is a  normally torsion-free square-free monomial ideal. To do this, put $\mathfrak{q}:=(x_5, x_9)$, $h=(x_{10})$, and 
 \begin{align*}
 I:&=(x_1,x_5) \cap (x_1,x_6) \cap  (x_1,x_7) \cap  (x_1, x_8) \cap  (x_2,x_5) \cap  (x_2,x_6)    \\  
 & \cap  (x_2,x_7) \cap  (x_3,x_5) \cap   (x_3,x_6) \cap  (x_3,x_7) \cap  (x_4,x_5).
 \end{align*}
 It is obvious that  $J=I \cap (\mathfrak{q}, h)$. Now, consider the following simple graphs
 \begin{align*}
 E(G)=\{&\{x_1,x_5\}, \{x_1,x_6\}, \{x_1,x_7\}, \{x_1, x_8\}, \{x_2,x_5\}, \{x_2,x_6\},    \\  
 &   \{x_2,x_7\}, \{x_3,x_5\}, \{x_3,x_6\}, \{x_3,x_7\}, \{x_4,x_5\}\},
 \end{align*}
  and 
\begin{align*}
 E(H)=\{&\{x_1,x_5\}, \{x_1,x_6\}, \{x_1,x_7\}, \{x_1, x_8\}, \{x_2,x_5\}, \{x_2,x_6\},    \\  
 &   \{x_2,x_7\}, \{x_3,x_5\}, \{x_3,x_6\}, \{x_3,x_7\}, \{x_4,x_5\}, \{x_5, x_9\}\}.
 \end{align*}
 One can easily  see that $I$ (resp. $I\cap \mathfrak{q}$) is the cover  ideal of  $G$ (resp.  $H$). 
 Since both $G$ and $H$ are bipartite, by  Corollary  \ref{Cover. Bipartite. NTF},  we obtain  both $I$ and $I\cap \mathfrak{q}$ are normally torsion-free. 
 In addition, we have  $\mathrm{supp}(h) \cap  (\mathrm{supp}(\mathfrak{q}) \cup \mathrm{supp}(I))=\emptyset$. It follows immediately from 
Lemma  \ref{NTF1} that $J$ is normally torsion-free as well, and so $\mathrm{Ass}_S(S/J^k)= \mathrm{Ass}_S(S/J)= \mathrm{Min}(J)$ for all $k\geq 1$. 
Therefore, $\widetilde{L}$  has the copersistence property, as required. 
}
\end{example}


\section{Nearly copersistence property of monomial ideals} \label{NCOP}

In this section, we aim to investigate the notion of nearly copersistence property of monomial ideals. Recall that a monomial ideal $I$ in a ploynomial ring 
  $R=K[x_1, \ldots, x_n]$ over a  field $K$ is said to be   \textit{nearly copersistence property} 
 if there exist a positive integer $s$ and a monomial prime ideal  $\mathfrak{p}$ such that 
 $\mathrm{Ass}_R(R/I^{m}) \cup \{\mathfrak{p}\} \supseteq \mathrm{Ass}_R(R/I^{m+1})$ for all $1\leq m\leq s$, and  $\mathrm{Ass}_R(R/I^m) \supseteq \mathrm{Ass}_R(R/I^{m+1})$   for all $m \geq s+1$. 

We begin this section with the following theorem which gives a large class of square-free monomial ideals which have nearly copersistence property. To do this, one 
needs to recall that a finite simple undirected connected graph $G$ is said to be  an {\it almost bipartite} graph when  $G$ has only one induced odd cycle subgraph.

\begin{theorem} \label{NCOP.Almost}
Let  $G=(V(G), E(G))$ be  a finite simple connected graph, and $J(G)$ denotes the cover ideal of $G$.  Then   $J(G)$ has 
nearly copersistence property  if and only if $G$ is either a bipartite graph or an almost bipartite graph. 
\end{theorem}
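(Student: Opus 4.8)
The plan is to reduce the statement, in both directions, to the known description of associated primes of powers of cover ideals. The key external inputs will be Proposition \ref{NKA.Pro} (which computes $\mathrm{Ass}_R(R/J(C_{2n+1})^s)$ for odd cycles), Lemma \ref{FHV2} (which says that a monomial prime $P$ belongs to $\mathrm{Ass}(R/J(\mathcal{H})^d)$ if and only if it belongs to the corresponding associated primes of the induced subhypergraph on $\mathrm{supp}(P)$), Corollary \ref{Cover. Bipartite. NTF} (bipartite cover ideals are normally torsion-free, hence trivially copersistent), together with the standard fact that $J(G)$ is normally torsion-free precisely when $G$ is bipartite (Gitler--Reyes--Villarreal). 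The overall structure is: first dispose of the bipartite case; then treat the almost bipartite case by a careful analysis of where the ``extra'' prime can enter; then prove the converse by showing that a graph with two or more induced odd cycles forces at least two new primes to appear at incomparable stages, contradicting nearly copersistence.

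First I would handle the easy direction. If $G$ is bipartite, then $J(G)$ is normally torsion-free by Corollary \ref{Cover. Bipartite. NTF}, so $\mathrm{Ass}_R(R/J(G)^k) = \mathrm{Min}(J(G))$ for all $k$, and the copersistence property holds; since copersistence implies nearly copersistence, we are done. If $G$ is almost bipartite with unique induced odd cycle on a vertex set supporting a prime $\mathfrak{p}$, I would argue that for every monomial prime $P$ and every $d$, membership $P\in\mathrm{Ass}(R/J(G)^d)$ is detected on the induced subgraph $G_P$ via Lemma \ref{FHV2}; if $G_P$ is bipartite then $J(G_P)$ is normally torsion-free and $P$ can only be a minimal prime, so it lies in $\mathrm{Ass}$ of every power; if $G_P$ is non-bipartite then $G_P$ contains an induced odd cycle, which by the almost-bipartite hypothesis must be \emph{the} odd cycle, forcing $\mathrm{supp}(P)$ to contain that cycle's vertex set, and a monotonicity argument (using Proposition \ref{NKA.Pro} on the odd cycle and the fact that adding the extra prime $\mathfrak{p}$ of the full homogeneous maximal ideal of the cycle is the only non-minimal prime that can occur) shows the only possible ``new'' embedded prime that appears at some power $m+1$ but not at $m$ is $\mathfrak{p}$ itself, and once it appears it persists. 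Thus $\mathrm{Ass}_R(R/I^m)\cup\{\mathfrak{p}\}\supseteq\mathrm{Ass}_R(R/I^{m+1})$ for all $m$, giving nearly copersistence with $s$ taken large enough to cover the stabilization index.

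For the converse, suppose $J(G)$ has the nearly copersistence property witnessed by the prime $\mathfrak{p}$, and suppose toward a contradiction that $G$ is connected but neither bipartite nor almost bipartite, so $G$ has at least two distinct induced odd cycles $C$ and $C'$. The idea is to produce two distinct embedded (non-minimal) primes $Q\supsetneq$ (something minimal) and $Q'$, supported respectively on sets containing $V(C)$ and $V(C')$, that are \emph{not} minimal primes of $J(G)$ and appear in $\mathrm{Ass}_R(R/J(G)^{d})$ for suitable $d\ge 2$: using Lemma \ref{FHV2} localized to $V(C)$ and to $V(C')$ and Proposition \ref{NKA.Pro}, the ``maximal ideal of the induced cycle'' is such an embedded prime for each cycle. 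Since these two primes have distinct supports they are distinct, so at the stage where they first appear we would need $\mathrm{Ass}_R(R/I^{m+1})\setminus\mathrm{Ass}_R(R/I^{m})$ to contain two distinct primes, contradicting that only the single prime $\mathfrak{p}$ is ever allowed as a new element (and that beyond $s$ the sequence is copersistent, hence no new primes at all).

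I expect the main obstacle to be the almost-bipartite (forward) direction: one must show cleanly that the \emph{only} embedded prime that ever shows up for $J(G)$ is the one attached to the unique odd cycle, and that its appearance is monotone in the exponent (appears and then stays). This requires combining the localization principle of Lemma \ref{FHV2} with a structural claim that any non-bipartite induced subgraph of an almost bipartite graph must itself contain the unique odd cycle, plus persistence of that single embedded prime across powers (which one gets from Proposition \ref{NKA.Pro} applied to the induced cycle together with Lemma \ref{FHV2}). Making the bookkeeping of which $m$ the extra prime may enter, and verifying it never leaves, is the delicate part; the converse direction is comparatively mechanical once one exhibits two distinct embedded primes.
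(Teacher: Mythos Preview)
Your converse direction (nearly copersistence $\Rightarrow$ bipartite or almost bipartite) is essentially the paper's argument: produce the two primes $\mathfrak{p}_1=(x_i:x_i\in V(C_1))$ and $\mathfrak{p}_2=(x_i:x_i\in V(C_2))$ via Proposition~\ref{NKA.Pro} and Lemma~\ref{FHV2}, observe that every prime in $\mathrm{Ass}_R(R/J(G))$ is generated by two variables while $\lvert\mathrm{supp}(\mathfrak{p}_i)\rvert\ge 3$, and conclude $\mathfrak{p}_1=\mathfrak{p}_2=\mathfrak{p}$ from the inclusion $\mathrm{Ass}_R(R/J(G))\cup\{\mathfrak{p}\}\supseteq\mathrm{Ass}_R(R/J(G)^2)$. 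That is fine.

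The gap is in your treatment of the almost bipartite case. You want to show that the \emph{only} embedded prime of any power of $J(G)$ is $\mathfrak{p}=(x_i:x_i\in V(C))$, and you appeal to Proposition~\ref{NKA.Pro} for this. But Proposition~\ref{NKA.Pro} only describes $\mathrm{Ass}(J(C_{2n+1})^s)$ for the bare cycle; combined with Lemma~\ref{FHV2} it tells you that $\mathfrak{p}\in\mathrm{Ass}(J(G)^s)$ for $s\ge 2$, not that no \emph{larger} prime occurs. Concretely, if $P$ is a monomial prime with $V(C)\subsetneq\mathrm{supp}(P)$, then $G_P$ is again almost bipartite (its unique induced odd cycle is still $C$), but nothing in your outline rules out $P\in\mathrm{Ass}(k[P]/J(G_P)^d)$ for some $d$. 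Your ``monotonicity argument (using Proposition~\ref{NKA.Pro} on the odd cycle\ldots)'' does not touch this case, and the sentence ``the only non-minimal prime that can occur'' is precisely what needs proof.

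The paper sidesteps this by invoking \cite[Theorem~4.5]{NQKR}, whose proof yields directly that $\mathrm{Ass}(J(G)^s)=\mathrm{Min}(J(G))\cup\{\mathfrak{p}\}$ for all $s\ge 2$ whenever $G$ is almost bipartite; nearly copersistence is then immediate. If you want to avoid that citation, you can close the gap by induction on $\lvert V(G)\rvert$: for $P$ with $V(C)\subsetneq\mathrm{supp}(P)\subsetneq V(G)$, the induced graph $G_P$ is almost bipartite with fewer vertices, so by induction $\mathrm{Ass}(J(G_P)^d)=\mathrm{Min}(J(G_P))\cup\{(x_i:x_i\in V(C))\}$, and the maximal ideal of $k[P]$ is in neither set. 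The remaining case $P=\mathfrak{m}$ then needs a separate argument (this is where the actual content of \cite[Theorem~4.5]{NQKR} lies). Either way, you should flag clearly that Proposition~\ref{NKA.Pro} alone is not enough.
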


\begin{proof}
We  first assume that  $J(G)$  has  nearly copersistence property. On the contrary, assume  that $G$ is neither bipartite nor almost bipartite. This implies  $G$ has at least two induced odd cycle subgraphs, say $C_1$ and $C_2$.  Now,  Proposition  \ref{NKA.Pro} allows us to  deduce  
 $\mathfrak{p}_1=(x_i~:~ x_i \in  V(C_1))\in \mathrm{Ass}(J(C_1)^s)$ and  $\mathfrak{p}_2=(x_i~:~ x_i\in  V(C_2))\in \mathrm{Ass}(J(C_2)^s)$ for all $s\geq 2$. By virtue of  $G_{\mathfrak{p}_1}=C_1$ and $G_{\mathfrak{p}_2}=C_2$, it follows immediately from  Lemma \ref{FHV2}
  that $\mathfrak{p}_1, \mathfrak{p}_2 \in \mathrm{Ass}_R(R/J(G)^s)$ for all $s\geq 2$, where   $R=K[x_i : x_i \in V(G)]$. 
  Since $J(G)$ has nearly copersistence property, there exists some   monomial prime ideal $\mathfrak{p}$ such that 
   $\mathrm{Ass}_R(R/J(G)) \cup \{\mathfrak{p}\}\supseteq \mathrm{Ass}_R(R/J(G)^2)$.   We know that if $\mathfrak{q}\in \mathrm{Ass}_R(R/J(G))$, then $\mathrm{supp}(\mathfrak{q})=2$. On account of   $\mathrm{supp}(\mathfrak{p}_1)\geq 3$   and $\mathrm{supp}(\mathfrak{p}_2)\geq 3$, we must have 
        $\mathfrak{p}_1, \mathfrak{p}_2 \notin \mathrm{Ass}_R(R/J(G))$, and so $\mathfrak{p}_1=\mathfrak{p}_2=\mathfrak{p}$. 
     This leads to a   contradiction. 
   
To establish the converse, let $G$ be either a bipartite graph or an almost bipartite graph. 
According to Corollary \ref{Cover. Bipartite. NTF}, we know that the cover ideal of any bipartite graph is 
normally torsion-free. In fact, in this case, we have $\mathrm{Ass}_R(R/J(G))=\mathrm{Ass}_R(R/J(G))^s$ for all $s\geq 2$, and by choosing 
$\mathfrak{p}:=(x_i~:~ x_i\in V(G))$,  the claim is true. 
Hence, suppose that  $G$ is an almost bipartite graph, and assume that  $C$ is  its unique induced odd cycle subgraph. Set 
  $\mathfrak{p}:=(x_i~:~ x_i\in V(C))$. 
  Following the proof of \cite[Theorem 4.5]{NQKR} implies that $\mathrm{Ass}(J(G)^s)=\mathrm{Ass}_R(R/J(G)) \cup \{\mathfrak{p}\}$ for all $s\geq 2$.  This means that $J(G)$ has nearly copersistence property, as required. 
\end{proof}


In Proposition \ref{NCOP-to-COP} and Lemma \ref{Lem.NCOP.1}, we seek some connections between the copersistence property and nearly copersistence property. 

\begin{proposition} \label{NCOP-to-COP}
Let $I\subset  R=K[x_1, \ldots, x_n]$ be a monomial ideal such that has nearly copersistence property. Then   there exists a monomial prime ideal $\mathfrak{p}\in V^*(I)$ such that $I(\mathfrak{p}\setminus \{x_i\})$ has the copersistence property  for all $x_i\in \mathfrak{p}$.
\end{proposition}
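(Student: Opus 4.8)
The plan is to exploit the behaviour of associated primes under monomial localization (Theorem~\ref{Localization} together with Lemma~\ref{LEM.Localization}(ii)), combined with the key observation that $\mathfrak{p}\setminus\{x_i\}$ is a \emph{strictly} smaller prime than $\mathfrak{p}$, so that the single exceptional prime $\mathfrak{p}$ furnished by the nearly copersistence property is automatically deleted once we pass to the monomial localization $I(\mathfrak{p}\setminus\{x_i\})$.

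First I would fix $s$ and a monomial prime $\mathfrak{p}$ witnessing the nearly copersistence property of $I$, and reduce to the case $\mathfrak{p}\in V^*(I)$. If $\mathrm{Ass}_R(R/I^{m})\supseteq\mathrm{Ass}_R(R/I^{m+1})$ holds for every $m\ge 1$ (so that $I$ already has the copersistence property), then the definition is satisfied with \emph{any} monomial prime in place of $\mathfrak{p}$, and I simply replace $\mathfrak{p}$ by the graded maximal ideal $\mathfrak{m}=(x_1,\dots,x_n)\in V^*(I)$. Otherwise, since $\mathrm{Ass}_R(R/I^{m})\supseteq\mathrm{Ass}_R(R/I^{m+1})$ holds for all $m\ge s+1$, the inclusion must fail for some $m_0$ with $1\le m_0\le s$; comparing this failure with $\mathrm{Ass}_R(R/I^{m_0})\cup\{\mathfrak{p}\}\supseteq\mathrm{Ass}_R(R/I^{m_0+1})$ forces $\mathfrak{p}\in\mathrm{Ass}_R(R/I^{m_0+1})$, and as an associated prime of $R/I^{m_0+1}$ contains $I^{m_0+1}$ while $\mathfrak{p}$ is prime, we get $\mathfrak{p}\supseteq I$, i.e. $\mathfrak{p}\in V^*(I)$. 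This $\mathfrak{p}$ is the monomial prime claimed in the statement.

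Next I would fix $x_i\in\mathfrak{p}$, put $\mathfrak{q}:=\mathfrak{p}\setminus\{x_i\}$ (a monomial prime ideal with $\mathfrak{q}\subsetneq\mathfrak{p}$), and show that $I(\mathfrak{q})$ has the copersistence property. If $\mathfrak{q}\notin V^*(I)$, then some minimal generator of $I$ involves no variable of $\mathfrak{q}$ and hence maps to $1$ under $R\to R(\mathfrak{q})$, so $I(\mathfrak{q})=R(\mathfrak{q})$ and there is nothing to prove. So assume $\mathfrak{q}\in V^*(I)=V^*(I^{k})$ for all $k\ge 1$. Fix $k\ge 1$. By Lemma~\ref{LEM.Localization}(ii), $I(\mathfrak{q})^{k}=I^{k}(\mathfrak{q})$ and $I(\mathfrak{q})^{k+1}=I^{k+1}(\mathfrak{q})$, and Theorem~\ref{Localization} gives
$$\mathrm{Ass}_{R(\mathfrak{q})}(R(\mathfrak{q})/I(\mathfrak{q})^{k})=\{\mathfrak{r}\in\mathrm{Ass}_R(R/I^{k}):\mathfrak{r}\subseteq\mathfrak{q}\},$$
and likewise with $k$ replaced by $k+1$. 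Now take $\mathfrak{r}\in\mathrm{Ass}_{R(\mathfrak{q})}(R(\mathfrak{q})/I(\mathfrak{q})^{k+1})$, so $\mathfrak{r}\in\mathrm{Ass}_R(R/I^{k+1})$ and $\mathfrak{r}\subseteq\mathfrak{q}$. If $1\le k\le s$ then $\mathfrak{r}\in\mathrm{Ass}_R(R/I^{k+1})\subseteq\mathrm{Ass}_R(R/I^{k})\cup\{\mathfrak{p}\}$, and since $\mathfrak{r}\subseteq\mathfrak{q}\subsetneq\mathfrak{p}$ rules out $\mathfrak{r}=\mathfrak{p}$, we conclude $\mathfrak{r}\in\mathrm{Ass}_R(R/I^{k})$; if $k\ge s+1$ then $\mathfrak{r}\in\mathrm{Ass}_R(R/I^{k+1})\subseteq\mathrm{Ass}_R(R/I^{k})$ immediately. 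Either way $\mathfrak{r}\in\mathrm{Ass}_R(R/I^{k})$ with $\mathfrak{r}\subseteq\mathfrak{q}$, so $\mathfrak{r}\in\mathrm{Ass}_{R(\mathfrak{q})}(R(\mathfrak{q})/I(\mathfrak{q})^{k})$ by Theorem~\ref{Localization} once more, which proves that $I(\mathfrak{q})$ has the copersistence property.

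I do not expect a genuine obstacle here: the proof is bookkeeping built on two known results. The only subtle points are the reduction ensuring $\mathfrak{p}\in V^*(I)$ (so that the conclusion is not vacuous) and the degenerate case $\mathfrak{q}\notin V^*(I)$ where the monomial localization collapses to the unit ideal; the conceptual core is simply that $\mathfrak{p}$ cannot be contained in the strictly smaller prime $\mathfrak{p}\setminus\{x_i\}$, so the exceptional prime never survives localization at $\mathfrak{p}\setminus\{x_i\}$.
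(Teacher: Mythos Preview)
Your proof is correct and follows essentially the same route as the paper: both arguments localize at $\mathfrak{q}=\mathfrak{p}\setminus\{x_i\}$, invoke Lemma~\ref{LEM.Localization}(ii) and Theorem~\ref{Localization}, and use that $\mathfrak{q}\subsetneq\mathfrak{p}$ to rule out the exceptional prime. Your treatment is in fact more careful than the paper's, which tacitly assumes $\mathfrak{p}\in V^*(I)$ and does not address the degenerate case $\mathfrak{q}\notin V^*(I)$; your reductions for these two points are correct and fill genuine (if minor) gaps.
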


\begin{proof}
Due to $I$ has  nearly copersistence property, hence  there exist  $s\geq 1$ and a monomial prime ideal  $\mathfrak{p}$ such that 
 $\mathrm{Ass}_R(R/I^{m}) \cup \{\mathfrak{p}\} \supseteq \mathrm{Ass}_R(R/I^{m+1})$ for all $1\leq m\leq s$, and  $\mathrm{Ass}_R(R/I^m) \supseteq \mathrm{Ass}_R(R/I^{m+1})$   for all $m \geq s+1$. In the sequel, we want  to show that $I(\mathfrak{p}\setminus \{x_i\})$ has the copersistence property  for all $x_i\in \mathfrak{p}$. Fix  $x_i\in \mathfrak{p}$ and  put $\mathfrak{q}:=\mathfrak{p}\setminus \{x_i\}$. We aim to prove that 
  $\mathrm{Ass}_{R(\mathfrak{q})}(R(\mathfrak{q})/I(\mathfrak{q})^{r}) \supseteq 
  \mathrm{Ass}_{R(\mathfrak{q})}(R(\mathfrak{q})/I(\mathfrak{q})^{r+1})$ for all $r\geq 1$. To do this, fix  $r \geq 1$. From Lemma \ref{LEM.Localization}(ii), 
  we have $I(\mathfrak{q})^{r+1}=I^{r+1}(\mathfrak{q})$. Now, according to Theorem \ref{Localization}, we get the following 
$$\mathrm{Ass}_{R(\mathfrak{q})}(R(\mathfrak{q})/I^{r+1}(\mathfrak{q}))=\{Q~:~ Q\in \mathrm{Ass}_{R}(R/I^{r+1})~\text{and}~ Q \subseteq \mathfrak{q}\}.$$ 
Consider  $Q\in \mathrm{Ass}_{R(\mathfrak{q})}(R(\mathfrak{q})/I(\mathfrak{q})^{r+1})$. Hence, we obtain  $Q\in \mathrm{Ass}_{R}(R/I^{r+1})$ and
  $Q \subseteq \mathfrak{q}$. It follows from  $\mathfrak{q}=\mathfrak{p}\setminus \{x_i\}$ that   $Q\neq \mathfrak{p}$. We therefore must have 
    $Q\in   \mathrm{Ass}_R(R/I^r)$, and Theorem  \ref{Localization} implies that 
     $Q\in  \mathrm{Ass}_{R(\mathfrak{q})}(R(\mathfrak{q})/I^r(\mathfrak{q}))$. Since $I(\mathfrak{q})^{r}=I^{r}(\mathfrak{q})$, this leads to 
      $Q\in  \mathrm{Ass}_{R(\mathfrak{q})}(R(\mathfrak{q})/I(\mathfrak{q})^{r})$. This gives rise to $I(\mathfrak{p}\setminus \{x_i\})$ has the copersistence property, as desired. 
\end{proof}


\begin{lemma} \label{Lem.NCOP.1}
  Let  $I$ be a monomial ideal in $R=K[x_1, \ldots, x_n]$ such that  $I(\mathfrak{m}\setminus \{x_i\})$ has  the copersistence property   for all $i=1, \ldots, n$, where $\mathfrak{m}=(x_1, \ldots, x_n)$. Then $I$    has nearly copersistence property.  
\end{lemma}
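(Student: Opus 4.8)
The plan is to verify the definition of nearly copersistence property directly, choosing the monomial prime ideal to be the graded maximal ideal $\mathfrak{m}=(x_1,\ldots,x_n)$ and choosing $s$ to be the index of stability $s_0$ of $I$ (which exists by Brodmann's theorem \cite{BR}). With these choices the whole assertion reduces to establishing the single containment
\[
\mathrm{Ass}_R(R/I^m)\cup\{\mathfrak{m}\}\supseteq\mathrm{Ass}_R(R/I^{m+1})\qquad\text{for all }m\geq 1.
\]
Granting this, the definition follows at once: for $m\geq s_0$ Brodmann's theorem gives $\mathrm{Ass}_R(R/I^m)=\mathrm{Ass}_R(R/I^{m+1})$, so the displayed containment for $1\leq m\leq s_0$ together with this equality for $m\geq s_0+1$ are precisely the two conditions required, with $s=s_0$ and $\mathfrak{p}=\mathfrak{m}$. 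Taking $\mathfrak{p}=\mathfrak{m}$ is the natural choice here, since $\mathfrak{m}$ is the unique monomial prime of $R$ that is not contained in any of the ideals $\mathfrak{m}\setminus\{x_i\}$.

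To prove the displayed containment, fix $m\geq 1$ and take $Q\in\mathrm{Ass}_R(R/I^{m+1})$; since $I$ is a monomial ideal, $Q$ is a monomial prime, say $Q=(x_j:j\in S)$ with $S\subseteq\{1,\ldots,n\}$. If $Q=\mathfrak{m}$ there is nothing to prove, so assume $S\neq\{1,\ldots,n\}$, fix some $i\notin S$, and put $\mathfrak{p}_i:=\mathfrak{m}\setminus\{x_i\}$, so that $Q\subseteq\mathfrak{p}_i$. I first note that $\mathfrak{p}_i\in V^*(I)$: the associated prime $Q$ contains some minimal prime $\mathfrak{p}_0$ of $I^{m+1}$, equivalently of $I$, and hence $I\subseteq\mathfrak{p}_0\subseteq Q\subseteq\mathfrak{p}_i$. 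Now I pass to the monomial localization at $\mathfrak{p}_i$. Using $I^{m+1}(\mathfrak{p}_i)=I(\mathfrak{p}_i)^{m+1}$ from Lemma \ref{LEM.Localization}(ii) and applying Theorem \ref{Localization} to the monomial ideal $I^{m+1}$ (legitimate since $\mathfrak{p}_i\in V^*(I)=V^*(I^{m+1})$), I obtain
\[
\mathrm{Ass}_{R(\mathfrak{p}_i)}\bigl(R(\mathfrak{p}_i)/I(\mathfrak{p}_i)^{m+1}\bigr)=\{Q'\in\mathrm{Ass}_R(R/I^{m+1}):Q'\subseteq\mathfrak{p}_i\}\ni Q.
\]
Since $I(\mathfrak{p}_i)=I(\mathfrak{m}\setminus\{x_i\})$ has the copersistence property by hypothesis, this forces $Q\in\mathrm{Ass}_{R(\mathfrak{p}_i)}(R(\mathfrak{p}_i)/I(\mathfrak{p}_i)^{m})$. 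Applying Lemma \ref{LEM.Localization}(ii) and Theorem \ref{Localization} once more, this time to $I^{m}$, places $Q$ in $\{Q'\in\mathrm{Ass}_R(R/I^m):Q'\subseteq\mathfrak{p}_i\}\subseteq\mathrm{Ass}_R(R/I^m)$, which is exactly what we wanted.

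I do not expect a genuine obstacle: the argument first descends to, and then ascends from, the monomial localization, dual to the proof of Proposition \ref{NCOP-to-COP}, and the only new input is that $\mathfrak{m}\setminus\{x_i\}$ captures every associated prime of every power of $I$ except possibly $\mathfrak{m}$ itself, which is precisely the prime we are allowed to add. The one point deserving care is checking that $\mathfrak{p}_i\in V^*(I)$, so that Theorem \ref{Localization} applies; this is where one uses that an associated prime of a monomial ideal lies over a minimal prime, so that $Q\subseteq\mathfrak{p}_i$ already implies $I\subseteq\mathfrak{p}_i$. (If one prefers to avoid even this, one can observe that the conclusion of Theorem \ref{Localization} in fact holds for every monomial prime $\mathfrak{p}$, both sides being empty whenever $\mathfrak{p}\notin V^*(I)$.)
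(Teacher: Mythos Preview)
Your proof is correct and follows essentially the same route as the paper's: reduce to showing $\mathrm{Ass}_R(R/I^m)\cup\{\mathfrak{m}\}\supseteq\mathrm{Ass}_R(R/I^{m+1})$ for all $m\geq 1$ via monomial localization at $\mathfrak{m}\setminus\{x_i\}$ (using Lemma~\ref{LEM.Localization}(ii) and Theorem~\ref{Localization}), then invoke the index of stability for the tail. Your additional verification that $\mathfrak{p}_i\in V^*(I)$ is a point the paper leaves implicit.
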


 \begin{proof}
   Fix $k \geq 1$ and choose   $Q \in \mathrm{Ass}_R(R/I^{k+1})$ with  $Q\neq \mathfrak{m}$.  Since  $Q$ is a monomial prime ideal, we can derive  that 
   $Q\subseteq \mathfrak{m}\setminus \{x_j\}$ for some $x_j \in \mathfrak{m}$. Set   $\mathfrak{q}:=\mathfrak{m}\setminus \{x_j\}$. 
   Due to  $I(\mathfrak{q})$ has the copersistence property, we get 
     $\mathrm{Ass}_{R(\mathfrak{q})}(R(\mathfrak{q})/I(\mathfrak{q})^k) \supseteq \mathrm{Ass}_{R(\mathfrak{q})}(R(\mathfrak{q})/I(\mathfrak{q})^{k+1})$.  Since   $Q \in \mathrm{Ass}_R(R/I^{k+1})$ and  $Q\subseteq \mathfrak{q}$,      Theorem \ref{Localization} gives that $Q\in \mathrm{Ass}_{R(\mathfrak{q})}(R(\mathfrak{q})/I^{k+1}(\mathfrak{q}))$, and by virtue of 
     $I(\mathfrak{q})^{k+1}=I^{k+1}(\mathfrak{q})$, we obtain $Q\in \mathrm{Ass}_{R(\mathfrak{q})}(R(\mathfrak{q})/I(\mathfrak{q})^{k+1})$.
    As   $I(\mathfrak{q})$ has  the copersistence property, this implies that 
$Q \in \mathrm{Ass}_{R(\mathfrak{q})}(R(\mathfrak{q})/I(\mathfrak{q})^k)$, and hence $Q \in \mathrm{Ass}_{R(\mathfrak{q})}(R(\mathfrak{q})/I^k(\mathfrak{q}))$. It follows  from Theorem \ref{Localization} that $Q \in \mathrm{Ass}_R(R/I^{k})$.
  This leads to $\mathrm{Ass}_R(R/I^{k}) \cup \{\mathfrak{m}\} \supseteq \mathrm{Ass}_R(R/I^{k+1})$. Let $\ell_0$ denote the index of stability of 
  $I$. Then we have   $\mathrm{Ass}_R(R/I^{\alpha}) = \mathrm{Ass}_R(R/I^{\alpha+1})$ for all $\alpha \geq \ell_0$. 
Hence, this permits  us to deduce that   $I$  has nearly copersistence property, and the proof is complete.  
\end{proof}


In the subsequent example,  we will show  how one can use Lemma  \ref{Lem.NCOP.1} to detect whether a monomial ideal has  nearly copersistence property 
or not.  In fact, Morey and  Villarreal  in \cite[Example 4.18]{MV} stated that for 
 $I=(x_1x_2^2x_3, x_2x_3^2x_4, x_3x_4^2x_5, x_4x_5^2x_1, x_5x_1^2x_2)$ and $\mathfrak{m}=(x_1, x_2, x_3, x_4, x_5)$, 
 we have $\mathfrak{m}\in \mathrm{Ass}_R(R/I)$, $\mathfrak{m}\notin \mathrm{Ass}_R(R/I^2)$, 
$\mathfrak{m}\notin \mathrm{Ass}_R(R/I^3)$, and $\mathfrak{m}\in \mathrm{Ass}_R(R/I^4)$. In Example \ref{Exam.NCOP.1}, we show $I$ has 
nearly copersistence property.

\begin{example} \label{Exam.NCOP.1}
\em{
Let   $R=K[x_1,x_2,x_3,x_4,x_5]$, $\mathfrak{m}=(x_1, x_2, x_3, x_4, x_5)$, and consider the monomial ideal
   $I=(x_1x_2^2x_3, x_2x_3^2x_4, x_3x_4^2x_5, x_4x_5^2x_1, x_5x_1^2x_2)$ in $R$. 
We claim that $I$    has nearly copersistence property. To do this, our strategy is to employ Lemma \ref{Lem.NCOP.1}. By the symmetry, it is enough for us to 
show that
 $J:=I(\mathfrak{m}\setminus \{x_5\})=(x_1x_2^2x_3, x_2x_3^2x_4, x_3x_4^2, x_1x_4, x_1^2x_2)$ has  the copersistence property  
 in $S=K[x_1, x_2, x_3, x_4]$.  To reach this aim, we verify that $\mathrm{Ass}_S(S/J^k)=\mathrm{Ass}_S(S/J)$ for all $k\geq 1$. 
 To do this, it is sufficient to prove that   $J$ has the persistence property  and also is normally torsion-free, since both 
 $\mathrm{Ass}_S(S/J^k) \subseteq \mathrm{Ass}_S(S/J^{k+1})$ and $\mathrm{Ass}_S(S/J^k) \subseteq \mathrm{Ass}_S(S/J)$ for all $k\geq 1$ imply that 
  $\mathrm{Ass}_S(S/J^k)=\mathrm{Ass}_S(S/J)$ for all $k\geq 1$.  
 Using  \textit{Macaulay2} \cite{GS}, we are able to check that $J$, $J^2$, and $J^3$ are integrally closed. 
 Since $S$ is a polynomial ring in four  variables,  it follows  from Proposition  \ref{Roberts-Normal}  that $J$ is a normal monomial ideal. In view of the proof of Proposition  \ref{Power}(v), $J$ has the  persistence property. Here, we demonstrate that $J$ is normally torsion-free.  Using \textit{Macaulay2} \cite{GS}, we get 
$$\mathrm{Ass}_S(S/J)=\{(x_1, x_3), (x_1, x_4), (x_2, x_4), (x_1, x_2, x_4), (x_1, x_3, x_4)\}.$$
To complete our argument, we need only establish $(x_1, x_2, x_3) \notin \mathrm{Ass}_S(S/J^k)$, $(x_2, x_3,x_4) \notin \mathrm{Ass}_S(S/J^k)$,  and  $(x_1,x_2, x_3, x_4)\notin \mathrm{Ass}_S(S/J^k)$ 
 for all $k\geq 2$. Suppose, on the contrary, that $\mathfrak{p}_1:=(x_1, x_2, x_3) \in \mathrm{Ass}_S(S/J^k)$ 
 (resp.  $\mathfrak{p}_2:=(x_2, x_3,x_4) \in \mathrm{Ass}_S(S/J^k)$)  for some $k\geq 2$. By  Theorem \ref{Localization}, we get  $(x_1, x_2, x_3) \in \mathrm{Ass}_S(S/J^k(\mathfrak{p}_1))$ (resp. $(x_2, x_3,x_4) \in \mathrm{Ass}_S(S/J^k(\mathfrak{p}_2)))$. 
  Due to $J^k(\mathfrak{p}_1)=J(\mathfrak{p}_1)^{k}$ (resp.  $J^k(\mathfrak{p}_2)=J(\mathfrak{p}_2)^{k}$), we obtain 
   $(x_1, x_2, x_3) \in \mathrm{Ass}_S(S/J(\mathfrak{p}_1)^{k})$ (resp. $(x_2, x_3,x_4) \in \mathrm{Ass}_S(S/J(\mathfrak{p}_2)^{k})$). 
  It is not difficult to see that $J(\mathfrak{p}_1)=(x_1, x_3)$ (resp. $J(\mathfrak{p}_2)=(x_2, x_4)$). Hence, we have 
   $\mathrm{Ass}_S(S/J(\mathfrak{p}_1)^{k})=\{(x_1, x_3)\}$ (resp. $\mathrm{Ass}_S(S/J(\mathfrak{p}_2)^{k})=\{(x_2, x_4)\}$), a contradiction. 
     We therefore have $(x_1, x_2, x_3) \notin \mathrm{Ass}_S(S/J^k)$ and  $(x_2, x_3,x_4) \notin \mathrm{Ass}_S(S/J^k)$ for all $k\geq 2$.  
   On the contrary, assume  $\mathfrak{p}_3:=(x_1, x_2, x_3,x_4) \in \mathrm{Ass}_S(S/J^k)$  for some $k\geq 2$, and seek a contradiction. 
   Since $J$ has the persistence property, we must have $\mathfrak{p}_3 \in \mathrm{Ass}^{\infty}(J)$, where 
     $\mathrm{Ass}^{\infty}(J)$ denotes the  stable set  of associated prime ideals  of $J$. We can now borrow the 
algorithm in \cite[page 216]{BHR} to compute $\mathrm{Ass}^{\infty}(J)$ as follows. In particular, it should be noted that since
 $\mathrm{ht} (\mathfrak{p}_3)=4$, 
we have to consider  the 3rd Koszul homology. 
\begin{verbatim} 
S = QQ[x_1, x_2, x_3, x_4]; 
J = monomialIdeal(x_1*x_2^2*x_3, x_2*x_3^2*x_4, x_3*x_4^2,
x_1*x_4, x_1^2*x_2); 
R = reesAlgebra(J); 
P_3 = {x_1,x_2,x_3,x_4};
phi = matrix{P_3}; 
C = (koszul phi)**R; 
dim(HH_3 C) > 0
 \end{verbatim}
After using  \textit{Macaulay2} \cite{GS}, we see that  the output of the above algorithm is: $\mathtt{false}$. This means that $\mathfrak{p}_3 \notin \mathrm{Ass}^{\infty}(J)$, which leads to a contradiction.  Consequently, we have $(x_1, x_2, x_3,x_4) \notin \mathrm{Ass}_S(S/J^k)$  for all $k\geq 2$, and 
so $J$ is normally torsion-free. This gives rise to $J$ has the  copersistence property, and therefore $I$ has nearly copersistence property, as desired. 
}
\end{example}


 Already, Herzog and Vladoiu in \cite{HV} introduced a class of monomial ideals which are called  monomial ideals  of intersection type. 
 Indeed, a monomial ideal is said to be a {\it monomial ideal of intersection type} when it can be presented as an intersection of powers of monomial prime ideals. In Example \ref{Exam.NCOP.2}, by using Lemma  \ref{Lem.NCOP.1},  we will investigate a subclass of monomial ideals of intersection type which have nearly copersistence property.

\begin{example} \label{Exam.NCOP.2}
\em{
 Let  $I\subset R=K[x_1, \ldots, x_n]$  be the following  monomial ideal   
  $$I=(\mathfrak{m}\setminus \{x_1\})^{d_1} \cap (\mathfrak{m}\setminus \{x_2\})^{d_2} \cap \cdots \cap  (\mathfrak{m}\setminus \{x_n\})^{d_n},$$ 
  where $\mathfrak{m}=(x_1, \ldots, x_n)$ and  $d_1, \ldots, d_n$ are some positive integers. 
 It is easy to see that  
 $I(\mathfrak{m}\setminus \{x_i\})=(\mathfrak{m}\setminus \{x_i\})^{d_i}$  for all $i=1, \ldots, n$.   
 By virtue of  $\mathfrak{m}\setminus \{x_i\}$, for each $i=1, \ldots, n$, has the copersistence property, we can derive from 
 Proposition  \ref{Power}(ii) that   $(\mathfrak{m}\setminus \{x_i\})^{d_i}$, for each $i=1, \ldots, n$, has the copersistence property as well.
 Now, it  follows immediately from  Lemma \ref{Lem.NCOP.1} that $I$ has nearly copersistence property. 
}
\end{example}


It has already been shown in \cite[Lemma 3.1]{CMS} that if $G$  is a cycle of length $2k + 1$ and $I$ is the edge ideal of $G$, 
then  $\mathrm{Ass}(R/I^n) = \mathrm{Min}(R/I)$ if $n \leq  k$ and  $\mathrm{Ass}(R/I^n) =\mathrm{Min}(R/I) \cup \{\mathfrak{m}\}$ 
if  $n \geq  k + 1.$ It follows promptly  from this lemma  that $I$ has nearly copersistence property. In the next corollary, we will directly re-prove this  fact
 by means of  Lemma  \ref{Lem.NCOP.1}. 
  
\begin{corollary} \label{NCOP.Edge}
Edge ideals of odd cycles have nearly copersistence property.
\end{corollary}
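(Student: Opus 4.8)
The plan is to deduce Corollary \ref{NCOP.Edge} from Lemma \ref{Lem.NCOP.1} by the same localization argument used in Example \ref{Exam.NCOP.1}. Let $C_{2k+1}$ be an odd cycle on the vertex set $[2k+1]$, let $I = I(C_{2k+1}) \subseteq R = K[x_1,\ldots,x_{2k+1}]$ be its edge ideal, and let $\mathfrak{m} = (x_1,\ldots,x_{2k+1})$. By Lemma \ref{Lem.NCOP.1}, it suffices to show that $I(\mathfrak{m}\setminus\{x_i\})$ has the copersistence property for every $i$. By the cyclic symmetry of $C_{2k+1}$ (which realizes the permutation hypothesis of Proposition \ref{Power}(iii) after deletion of a vertex, or more directly since all choices of $i$ give isomorphic ideals), it is enough to treat one value of $i$, say $i = 2k+1$.

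First I would identify the contracted ideal explicitly. Setting $x_{2k+1} = 1$ in the generators of $I(C_{2k+1})$ removes the two edges incident to the vertex $2k+1$ and leaves $J := I(\mathfrak{m}\setminus\{x_{2k+1}\}) = (x_1x_2, x_2x_3, \ldots, x_{2k-1}x_{2k})$, which is precisely the edge ideal of the path $P_{2k}$ on $2k$ vertices. The path is a bipartite graph, hence a forest, so its edge ideal is normally torsion-free — this is a classical fact (e.g. via Corollary \ref{Cover. Bipartite. NTF} applied appropriately, or the standard result that edge ideals of bipartite graphs are normally torsion-free). Therefore $\mathrm{Ass}_{R(\mathfrak{p})}(R(\mathfrak{p})/J^s) = \mathrm{Ass}_{R(\mathfrak{p})}(R(\mathfrak{p})/J) = \mathrm{Min}(J)$ for all $s \geq 1$, and in particular the chain of inclusions needed for the copersistence property holds (with equality). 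Then Lemma \ref{Lem.NCOP.1} immediately yields that $I(C_{2k+1})$ has nearly copersistence property.

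The only point requiring a little care is the symmetry reduction: contracting different vertices of $C_{2k+1}$ yields path ideals on the same number of vertices, and these are all isomorphic as ideals under a relabeling of variables, so by Proposition \ref{Power}(iii) (or simply by the observation in its proof that isomorphic monomial ideals share the copersistence property) it genuinely suffices to check one of them. I do not anticipate a real obstacle here — the main content is the recognition that the contraction of an odd-cycle edge ideal at a vertex is a path edge ideal, which is normally torsion-free — but one should be mildly attentive to the fact that the deleted variable $x_{2k+1}$ still divides generators of $I$ on "both sides", so the contraction merges the cycle into a single path rather than leaving two components.
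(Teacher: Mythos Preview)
Your overall strategy---reduce to Lemma \ref{Lem.NCOP.1}, exploit cyclic symmetry to check a single vertex, and argue that the monomial localization is normally torsion-free---is exactly the paper's approach. However, your computation of the localization is wrong. Setting $x_{2k+1}=1$ does \emph{not} remove the two edges incident to vertex $2k+1$; rather, the generators $x_{2k}x_{2k+1}$ and $x_{2k+1}x_1$ map to the single variables $x_{2k}$ and $x_1$. After discarding the now-redundant generators $x_1x_2$ and $x_{2k-1}x_{2k}$, one obtains
\[
I(\mathfrak{m}\setminus\{x_{2k+1}\}) \;=\; (x_1,\,x_{2k}) \;+\; (x_2x_3,\, x_3x_4,\,\ldots,\, x_{2k-2}x_{2k-1}),
\]
not the edge ideal of $P_{2k}$. (You appear to have confused contraction $x_{2k+1}\mapsto 1$ with deletion $x_{2k+1}\mapsto 0$.)

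The fix is immediate and is precisely what the paper does: the right-hand side is a sum, on disjoint variable sets, of the monomial prime $(x_1,x_{2k})$ and the edge ideal of a shorter path. Both summands have the copersistence property (the prime trivially, the path edge ideal because it is normally torsion-free), so Proposition \ref{Summation} gives copersistence for the sum. Lemma \ref{Lem.NCOP.1} then finishes exactly as you intended.
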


\begin{proof}
Assume that  $R=K[x_1, \ldots, x_{2n+1}]$,  $\mathfrak{m}=(x_1, \ldots, x_{2n+1})$, and  $C_{2n+1}=(V(C_{2n+1}), E(C_{2n+1}))$ denotes 
 the odd cycle graph of order $2n+1$ with the vertex set $V(C_{2n+1})=\{x_1, \ldots, x_{2n+1}\}$ and  the following edge set
$$E(C_{2n+1})=\{\{x_i, x_{i+1}\} \mid i=1, \ldots, 2n\} \cup \{\{x_{2n+1}, x_{1}\}\}.$$ Put $L:=I(C_{2n+1})$. 
By the symmetry, it is sufficient to demonstrate that $L(\mathfrak{m}\setminus \{x_{2n+1}\})$ has the copersistence property. One can easily see that 
$$L(\mathfrak{m}\setminus \{x_{2n+1}\})=(x_ix_{i+1} \mid i=2, \ldots, 2n-2)+ (x_1, x_{2n}).$$
Set $J:=(x_ix_{i+1} \mid i=2, \ldots, 2n-2)$. Since $J$  is the edge ideal of a path graph, we can deduce that 
$\mathrm{Ass}_R(R/J)=\mathrm{Ass}_R(R/J^s)$ for all $s\geq 1$, and so has the copersistence property. Moreover, the monomial prime ideal 
$\mathfrak{q}:=(x_1, x_{2n})$ has the copersistence property. Due to $\mathrm{supp}(\mathfrak{q}) \cap \mathrm{supp}(J)=\emptyset$, it follows from 
Theorem \ref{Summation} that $L$ has the copersistence property. Now, the claim can be concluded from Lemma \ref{Lem.NCOP.1}. 
\end{proof}


It is natural to ask whether the converse of  Lemma  \ref{Lem.NCOP.1} is true. The answer, in general, is negative  and we provide such a counterexample in 
  Example \ref{Not-Lem.NCOP.1}.  Before stating it, one has to  remember the following definition and fact. 
 
\begin{definition} (\cite[Definition 10.5.2]{V1}) 
\em{
A \textit{Hochster configuration of order $k$} of $G$ consists of two 
odd cycles $C_{2r+1}$ and $C_{2s+1}$ satisfying the following conditions:
\begin{itemize}
\item[(i)]  $C_{2r+1} \cap  N_G(C_{2s+1}) = \emptyset$  and $k = r + s + 1$.
\item[(ii)] No chord of either $C_{2r+1}$ or $C_{2s+1}$ is an edge of $G$.
\end{itemize}
}
\end{definition}

\begin{proposition} (\cite[Proposition 10.5.3]{V1}) \label{Hochster}
 If $G$ has a Hochster configuration of order $k$, then $\overline{I(G)^k}\neq I(G)^k$.
\end{proposition}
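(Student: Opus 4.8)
The plan is to exhibit a single explicit squarefree monomial witnessing $\overline{I(G)^k}\neq I(G)^k$. Write the Hochster configuration as the pair of odd cycles $C_{2r+1}$ and $C_{2s+1}$ with $r+s+1=k$, label the vertices of $C_{2r+1}$ by variables $a_1,\dots,a_{2r+1}$ (edges $a_ia_{i+1}$, indices mod $2r+1$) and those of $C_{2s+1}$ by $b_1,\dots,b_{2s+1}$ (edges $b_jb_{j+1}$), and set
$$
f \;=\; \Big(\prod_{i=1}^{2r+1} a_i\Big)\Big(\prod_{j=1}^{2s+1} b_j\Big),
$$
a squarefree monomial of degree $(2r+1)+(2s+1)=2(r+s+1)=2k$. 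I would then verify the two memberships $f\in\overline{I(G)^k}$ and $f\notin I(G)^k$ separately.

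For $f\in\overline{I(G)^k}$: since each vertex of an odd cycle lies on exactly two of its edges, $\prod_i (a_ia_{i+1})=\big(\prod_i a_i\big)^2$ and likewise for the $b$'s, so
$$
f^2 \;=\; \prod_{e\in E(C_{2r+1})} e \;\cdot\; \prod_{e'\in E(C_{2s+1})} e' .
$$
As the cycles are subgraphs of $G$, this is a product of $(2r+1)+(2s+1)=2k$ edge generators of $I(G)$, hence $f^2\in I(G)^{2k}=\big(I(G)^k\big)^2$. Recalling the standard description of the integral closure of a monomial ideal $L$ — a monomial $u$ lies in $\overline L$ as soon as $u^N\in L^N$ for some $N\ge 1$ — applied with $L=I(G)^k$ and $N=2$, I conclude $f\in\overline{I(G)^k}$.

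For $f\notin I(G)^k$: suppose $f\in I(G)^k$. Then $f$ is divisible by a product $e_1e_2\cdots e_k$ of $k$ edge generators of $I(G)$; comparing degrees ($2k=2k$) forces $f=e_1\cdots e_k$, and squarefreeness of $f$ forces the $e_\ell$ to be pairwise vertex-disjoint with $\bigcup_\ell e_\ell=V(C_{2r+1})\cup V(C_{2s+1})$. Thus $\{e_1,\dots,e_k\}$ is a perfect matching of $G$ on these $2k$ vertices. But condition (i) of a Hochster configuration forces $V(C_{2r+1})$ and $V(C_{2s+1})$ to be disjoint (a common vertex would lie in $N_G(C_{2s+1})$) and forces no edge of $G$ to run between the two sets, so each $e_\ell$ is contained in one of $V(C_{2r+1})$, $V(C_{2s+1})$, and the matching restricts to a perfect matching of $V(C_{2r+1})$ — impossible since $|V(C_{2r+1})|=2r+1$ is odd. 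This contradiction gives $f\notin I(G)^k$, and hence $\overline{I(G)^k}\neq I(G)^k$.

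The step requiring the most care — and the only place the combinatorial hypothesis genuinely enters — is this last matching argument: one must notice that the edges $e_\ell$ dividing $f$ need not be cycle edges, merely edges of $G$ among the $2k$ vertices of $f$, and then use condition (i) to split the putative perfect matching cycle by cycle (condition (ii), no chords, is not needed for this implication). The other point to state precisely is the monomial criterion for integral closure invoked in the first step, on which the passage from $f^2\in\big(I(G)^k\big)^2$ to $f\in\overline{I(G)^k}$ relies.
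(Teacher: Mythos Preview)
Your argument is correct. Note, however, that the paper does not supply its own proof of this proposition --- it is simply quoted from Villarreal's monograph \cite{V1} --- so there is no in-paper argument against which to compare. The witness $f=\prod_i a_i\prod_j b_j$, the computation $f^2=\prod_{e\in E(C_{2r+1})\cup E(C_{2s+1})}e\in I(G)^{2k}$, and the parity obstruction (a perfect matching of $G$ on $V(C_{2r+1})\cup V(C_{2s+1})$ would, by condition~(i), split into perfect matchings of the two odd vertex sets) together constitute exactly the standard proof as found in \cite{V1}. Your closing remark that condition~(ii) is not invoked in this direction is also accurate: the no-chord hypothesis matters for other uses of Hochster configurations but is irrelevant to the parity count here.
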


\begin{example} \label{Not-Lem.NCOP.1} 
\em{
Let $R=K[x_1, x_2, x_3, x_4, x_5, x_6]$ be a polynomial ring over a field $K$ and  $L$ be the following square-free  monomial ideal
$$L=(x_1x_2, x_2x_3, x_3x_4, x_4x_5, x_5x_1, x_1x_4x_6).$$ 
Set   $\mathfrak{m}:=(x_1, x_2, x_3, x_4, x_5, x_6)$  and $\mathfrak{p}:=(x_1, x_2, x_3, x_4, x_5)$. 
We claim that $L$ has nearly copersistence property,  while 
$$Q:=L(\mathfrak{m}\setminus\{x_6\})=(x_1x_2, x_2x_3, x_3x_4, x_4x_5, x_5x_1, x_1x_4),$$
 does not have the copersistence property. Using  \textit{Macaulay2} \cite{GS}, we find
 \begin{align*}
 \mathrm{Ass}_R(R/L)=\{&(x_1, x_2, x_4), (x_1,x_3,x_4), (x_1,x_3,x_5),  (x_2,x_4,x_5), \\
 & (x_2,x_3,x_5,x_6)\},
 \end{align*}
  and $\mathrm{Ass}_R(R/L^2)=\mathrm{Ass}(R/L)\cup \{\mathfrak{p}\}$.   In what follows, our aim is to show that
  $\mathrm{Ass}_R(R/L^s)= \mathrm{Ass}(R/L) \cup \{\mathfrak{p}\}$ for all $s\geq 3$. To simplify the notation, put 
  $I:=(x_1x_2, x_2x_3, x_3x_4, x_4x_5, x_5x_1).$ 
  According to  Proposition \ref{Hochster}, we conclude  that both $I$ and $J:=I+(x_1x_4)$ are  normal. It follows now from Theorem \ref{ANKRQ} 
  that $L=I+ (x_1x_4x_6)$ is normal as well. According to the proof of Proposition \ref{Power}(v), we can deduce that $L$ has the persistence property. 
 This implies that $\mathfrak{p}\in \mathrm{Ass}_R(R/L^s)$ for all $s\geq 3$.
  We here verify that $\mathfrak{m}\notin \mathrm{Ass}_R(R/L^s)$ for all $s\geq 3$. 
  Otherwise, $\mathfrak{m}\in \mathrm{Ass}_R(R/L^s)$ for some  $s\geq 3$. 
Since $L$ has the persistence property, this implies that we must have $\mathfrak{m}\in \mathrm{Ass}^{\infty}(L)$. 
 We can now refer to the algorithm in \cite[page 216]{BHR} to determine   $\mathrm{Ass}^{\infty}(L)$ as follows. Furthermore, notice that  since 
 $\mathrm{ht} (\mathfrak{m})=6$, we must  consider  the $5$th Koszul homology. 
\begin{verbatim} 
S = QQ[x_1, x_2, x_3, x_4, x_5, x_6]; 
L = monomialIdeal(x_1*x_2, x_2*x_3, x_3*x_4, x_4*x_5,
x_5*x_1, x_1*x_4*x_6); 
R = reesAlgebra(L); 
m = {x_1,x_2,x_3,x_4,x_5,x_6};
phi = matrix{m}; 
C = (koszul phi)**R; 
dim(HH_5 C) > 0
 \end{verbatim}
Using  \textit{Macaulay2} \cite{GS}, we detect that  the output of the above algorithm is: $\mathtt{false}$. 
This gives  that $\mathfrak{m} \notin \mathrm{Ass}^{\infty}(L)$,   a contradiction. Therefore, we have 
 $\mathfrak{m}\notin \mathrm{Ass}(R/L^s)$ for all $s\geq 3$.  From now on, our strategy is to rely on Theorem  \ref{Localization}, which
  enables us to eliminate redundant choices.   Let $\mathfrak{q}\in \mathrm{Ass}_R(R/L^s)\setminus \mathrm{Ass}_R(R/L^2)$ for some $s\geq 3$ be an embedded primes. Based on the above discussion, we have  $\mathfrak{q}\neq \mathfrak{m}$ and $\mathfrak{q}\neq \mathfrak{p}$. Hence, 
    we gather  all  possible cases in the table below: 
 
 \begin{center}
 \begin{tabular}{|c|c|}
  \hline
   $\mathfrak{q}=(x_2,x_3,x_4,x_5)$  &    $L(\mathfrak{q})=(x_2,x_4,x_5)$  \\
\hline
  $\mathfrak{q}=(x_2,x_4,x_5,x_6)$   &     $L(\mathfrak{q})=(x_2,x_4,x_5)$   \\
\hline
 $\mathfrak{q}=(x_1,x_2,x_4,x_6)$   &     $L(\mathfrak{q})=(x_1, x_2, x_4)$   \\
\hline
  $\mathfrak{q}=(x_1,x_3,x_4,x_6)$  &    $L(\mathfrak{q})=(x_1, x_3, x_4)$  \\
\hline
   $\mathfrak{q}=(x_1,x_3,x_5,x_6)$ &  $L(\mathfrak{q})=(x_1, x_3, x_5)$  \\
  \hline  
    $\mathfrak{q}=(x_1,x_2,x_3,x_4)$  & $L(\mathfrak{q})=(x_1, x_4, x_2x_3)$ \\
  \hline
   $\mathfrak{q}=(x_1,x_2,x_3,x_5)$ &    $L(\mathfrak{q})=x_1(x_2,x_4)+(x_3,x_5)$   \\
 \hline
 $\mathfrak{q}=(x_1,x_2,x_4,x_5)$  &  $L(\mathfrak{q})=(x_2, x_4, x_1x_5)$   \\
\hline
 $\mathfrak{q}=(x_1,x_3,x_4,x_5)$ &     $L(\mathfrak{q})=(x_1, x_3, x_4x_5)$ \\
\hline
     $\mathfrak{q}=(x_1,x_2,x_3,x_4,x_6)$ & $L(\mathfrak{q})=(x_1,x_4, x_2 x_3)$   \\
\hline
   $\mathfrak{q}=(x_1,x_2,x_4,x_5,x_6)$ & $L(\mathfrak{q})=(x_2,x_4, x_1 x_5)$  \\
\hline
  $\mathfrak{q}=(x_1,x_3,x_4,x_5,x_6)$ & $L(\mathfrak{q})=(x_1,x_3, x_4 x_5)$  \\
\hline
    $\mathfrak{q}=(x_1,x_2,x_3,x_5,x_6)$ & $L(\mathfrak{q})=x_1(x_2,x_6)+(x_3,x_5)$ \\
\hline
  $\mathfrak{q}=(x_2,x_3,x_4,x_5,x_6)$ &  $L(\mathfrak{q})=x_4(x_3,x_6)+(x_2,x_5)$  \\
\hline
   \end{tabular}
\end{center}

\bigskip
On account of Theorem \ref{Th.NTF.Summation},  each of the $L(\mathfrak{q})$'s  is a normally torsion-free square-free monomial ideal. 
This leads to  a contradiction. Thus,  we obtain    $\mathrm{Ass}_R(R/L^s)= \mathrm{Ass}(R/L) \cup \{\mathfrak{p}\}$ for all $s\geq 3$.
In  light of  $\mathrm{Ass}_R(R/L)\cup \{\mathfrak{p}\} \supseteq \mathrm{Ass}_R(R/L^2)$, and for all $m\geq 2$, we have 
 $\mathrm{Ass}_R(R/L^m) \supseteq \mathrm{Ass}_R(R/L^{m+1})$, this implies that $L$ has nearly copersistence property. 
   On the other hand, using  \textit{Macaulay2} \cite{GS}, we get    $(x_1, x_2, x_3, x_4, x_5)\in \mathrm{Ass}(Q^2)\setminus \mathrm{Ass}(Q).$
   Hence, $Q$ does not have the copersistence property. This yields that  the converse of  Lemma  \ref{Lem.NCOP.1} is not true in general. 
 }
\end{example}


The following  proposition asserts that, under a certain condition,  a monomial ideal has nearly copersistence property if and only if its monomial multiple  has 
nearly  copersistence property.

\begin{proposition}\label{Pro.NCOP.Multiple}
Let  $I \subset R=K[x_1, \ldots, x_n]$ be a monomial ideal, and $h$  be a monomial in $R$ such that $\mathrm{gcd}(h,u)=1$ for all $u\in \mathcal{G}(I)$. Then  $I$  has nearly copersistence property  if and only if $hI$   has nearly copersistence property. 
\end{proposition}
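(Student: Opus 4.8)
The plan is to reduce both directions to a single identity describing $\mathrm{Ass}_R(R/(hI)^k)$ in terms of $\mathrm{Ass}_R(R/I^k)$, supplied by Theorem \ref{5.2KHN} much as in the proof of Proposition \ref{Multiple}, and then to transport the definition of nearly copersistence property across it. Write $h=x_{j_1}^{b_1}\cdots x_{j_r}^{b_r}$ with $j_1,\dots,j_r\in\{1,\dots,n\}$; the hypothesis $\mathrm{gcd}(h,u)=1$ for every $u\in\mathcal{G}(I)$ means precisely that none of $x_{j_1},\dots,x_{j_r}$ divides any generator of $I$. Fix $k\geq 1$. Since $(hI)^k=h^kI^k$, the monomial $h^k=x_{j_1}^{kb_1}\cdots x_{j_r}^{kb_r}$ divides every minimal generator of $(hI)^k$, with quotient ideal $I^k$, so Theorem \ref{5.2KHN} gives
$$\mathrm{Ass}_R(R/(hI)^k)=\mathrm{Ass}_R(R/I^k)\cup\{(x_{j_1}),\dots,(x_{j_r})\}\qquad(k\geq 1).$$
Moreover, every minimal generator of $I^k$ is a product of $k$ generators of $I$, hence is divisible by no $x_{j_c}$, so by Proposition \ref{Pro.supp}(ii) no associated prime of $I^k$ contains any $x_{j_c}$. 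In particular each $(x_{j_c})$ lies outside $\mathrm{Ass}_R(R/I^k)$, the union above is disjoint, and every $\mathfrak{q}\in\mathrm{Ass}_R(R/I^k)$ is different from each $(x_{j_c})$.

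For the forward direction, assume $I$ has nearly copersistence property, witnessed by a positive integer $s$ and a monomial prime ideal $\mathfrak{p}$. I would verify that the same pair witnesses it for $hI$. If $1\leq m\leq s$, then applying the identity at $k=m+1$ and at $k=m$, and using $\mathrm{Ass}_R(R/I^{m+1})\subseteq\mathrm{Ass}_R(R/I^m)\cup\{\mathfrak{p}\}$,
$$\mathrm{Ass}_R(R/(hI)^{m+1})=\mathrm{Ass}_R(R/I^{m+1})\cup\{(x_{j_1}),\dots,(x_{j_r})\}\subseteq\mathrm{Ass}_R(R/(hI)^m)\cup\{\mathfrak{p}\};$$
if $m\geq s+1$, the same computation with $\mathrm{Ass}_R(R/I^{m+1})\subseteq\mathrm{Ass}_R(R/I^m)$ yields $\mathrm{Ass}_R(R/(hI)^{m+1})\subseteq\mathrm{Ass}_R(R/(hI)^m)$. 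Hence $hI$ has nearly copersistence property.

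For the converse, assume $hI$ has nearly copersistence property, witnessed by a positive integer $s$ and a monomial prime ideal $\mathfrak{p}$ (still a monomial prime ideal of $R$, hence admissible for $I$). Fix $m\geq 1$ and pick $\mathfrak{q}\in\mathrm{Ass}_R(R/I^{m+1})$; by the identity, $\mathfrak{q}\in\mathrm{Ass}_R(R/(hI)^{m+1})$. If $1\leq m\leq s$, then $\mathfrak{q}\in\mathrm{Ass}_R(R/(hI)^m)\cup\{\mathfrak{p}\}=\mathrm{Ass}_R(R/I^m)\cup\{(x_{j_1}),\dots,(x_{j_r})\}\cup\{\mathfrak{p}\}$; since $\mathfrak{q}\in\mathrm{Ass}_R(R/I^{m+1})$ cannot equal any $(x_{j_c})$, we get $\mathfrak{q}\in\mathrm{Ass}_R(R/I^m)\cup\{\mathfrak{p}\}$. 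For $m\geq s+1$ the same reasoning with the summand $\{\mathfrak{p}\}$ omitted gives $\mathfrak{q}\in\mathrm{Ass}_R(R/I^m)$. Thus $s$ and $\mathfrak{p}$ witness nearly copersistence property for $I$.

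The whole argument is bookkeeping, so there is no deep obstacle; the one step needing care is exactly the disjointness noted above — the height-one primes $(x_{j_1}),\dots,(x_{j_r})$ created by the factor $h$ must never re-enter $\mathrm{Ass}_R(R/I^k)$, for otherwise a containment read off the identity could fail. This is where the coprimality hypothesis is essential: without it one would have to decide, for each power, whether a given $(x_{j_c})$ is minimal or embedded for $I^k$, and that bookkeeping is precisely what the hypothesis removes, via Proposition \ref{Pro.supp}. A harmless subtlety is that the correction prime in the definition may be any monomial prime, so nothing goes wrong if it happens to coincide with one of the $(x_{j_c})$.
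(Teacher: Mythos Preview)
Your proof is correct and follows essentially the same route as the paper: both invoke Theorem~\ref{5.2KHN} to obtain $\mathrm{Ass}_R(R/(hI)^k)=\mathrm{Ass}_R(R/I^k)\cup\{(x_{j_1}),\dots,(x_{j_r})\}$ for each $k$, then push the nearly-copersistence inclusions through this identity, using Proposition~\ref{Pro.supp} and the coprimality hypothesis in the converse direction to rule out $\mathfrak{q}=(x_{j_c})$. Your extra remark that the union is in fact disjoint is true but not needed; otherwise the arguments coincide.
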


\begin{proof}
To show the forward implication, suppose that  $I$ has  nearly copersistence property. Also,   let $h=x_{j_1}^{\alpha_1}\cdots x_{j_t}^{\alpha_t}$ 
with $j_1, \ldots, j_t \in \{1, \ldots, n\}$ and $\alpha_i>0$ for all $i=1, \ldots, t$.   It follows from Theorem   \ref{5.2KHN} that   
\begin{equation}
\mathrm{Ass}_R(R/(hI)^{m+1})=\mathrm{Ass}_R(R/I^{m+1})\cup\{ (x_{j_1}),\ldots,(x_{j_t})\}, \label{NCOP.Multiple.1}
\end{equation}

and
\begin{equation}
\mathrm{Ass}_R(R/(hI)^{m})=\mathrm{Ass}_R(R/I^{m})\cup\{ (x_{j_1}),\ldots,(x_{j_t})\}. \label{NCOP.Multiple.2}
\end{equation}
Since  $I$ has  nearly copersistence property,  hence  there exist  $s\geq 1$ and a monomial prime ideal  $\mathfrak{p}$ such that  $\mathrm{Ass}_R(R/I^{m}) \cup \{\mathfrak{p}\} \supseteq \mathrm{Ass}_R(R/I^{m+1})$ for all $1\leq m\leq s$, and  $\mathrm{Ass}_R(R/I^m) \supseteq \mathrm{Ass}_R(R/I^{m+1})$   for all $m \geq s+1$. 
  If $1\leq m \leq s$, then $\mathrm{Ass}_R(R/I^{m}) \cup \{\mathfrak{p}\} \supseteq \mathrm{Ass}_R(R/I^{m+1})$, and so 
   $$\mathrm{Ass}_R(R/I^{m})\cup\{ (x_{j_1}),\ldots,(x_{j_t})\} \cup \{\mathfrak{p}\} \supseteq \mathrm{Ass}_R(R/I^{m+1})\cup\{ (x_{j_1}),\ldots,(x_{j_t})\}.$$
  Based on     \eqref{NCOP.Multiple.1} and \eqref{NCOP.Multiple.2}, we obtain $\mathrm{Ass}_R(R/(hI)^{m})  \cup \{\mathfrak{p}\} \supseteq \mathrm{Ass}_R(R/(hI)^{m+1})$. 
 Hence, let  $m\geq s+1$.  Since $\mathrm{Ass}_R(R/I^m) \supseteq \mathrm{Ass}_R(R/I^{m+1})$, we get 
 $$\mathrm{Ass}_R(R/I^{m})\cup\{ (x_{j_1}),\ldots,(x_{j_t})\}  \supseteq \mathrm{Ass}_R(R/I^{m+1})\cup\{ (x_{j_1}),\ldots,(x_{j_t})\}.$$
 It  follows rapidly from \eqref{NCOP.Multiple.1} and  \eqref{NCOP.Multiple.2} that  $\mathrm{Ass}_R(R/(hI)^{m}) \supseteq \mathrm{Ass}_R(R/(hI)^{m+1})$. 
 This implies that $hI$   has nearly copersistence property.

To demonstrate the converse, assume  that $hI$   has nearly copersistence property.  This yields that  there exist $s\geq 1$  and a monomial prime ideal  $\mathfrak{p}$ such that   $\mathrm{Ass}_R(R/(hI)^{m}) \cup \{\mathfrak{p}\} \supseteq \mathrm{Ass}_R(R/(hI)^{m+1})$ for all $1\leq m\leq s$, and 
 $\mathrm{Ass}_R(R/(hI)^m) \supseteq \mathrm{Ass}_R(R/(hI)^{m+1})$   for all $m \geq s+1$. 
 Take  an arbitrary element $\mathfrak{q}\in \mathrm{Ass}_R(R/I^{m+1})$. 
 Let  $1\leq m \leq s$. From \eqref{NCOP.Multiple.1}, we have   
 \begin{equation}
 \mathrm{Ass}_R(R/I^{m})\cup\{ (x_{j_1}),\ldots,(x_{j_t})\}\cup \{\mathfrak{p}\}
   \supseteq \mathrm{Ass}_R(R/I^{m+1})\cup\{ (x_{j_1}),\ldots,(x_{j_t})\}. \label{NCOP.Multiple.3}
 \end{equation}
  According to \eqref{NCOP.Multiple.3}, we derive that   $\mathfrak{q}\in  \mathrm{Ass}_R(R/I^{m})\cup\{ (x_{j_1}),\ldots,(x_{j_t})\}\cup \{\mathfrak{p}\}$. 
 By virtue of  $\mathrm{gcd}(h,u)=1$ for all $u\in \mathcal{G}(I)$,   Proposition \ref{Pro.supp} implies that  $\mathfrak{q} \notin  \{ (x_{j_1}),\ldots,(x_{j_t})\}$. This gives rise to    $\mathfrak{q}\in  \mathrm{Ass}_R(R/I^{m})\cup \{\mathfrak{p}\}$.  We thus   assume that $m\geq s+1$. 
On account of   $\mathfrak{q} \notin  \{ (x_{j_1}),\ldots,(x_{j_t})\}$ and    $$\mathrm{Ass}_R(R/I^{m})\cup\{ (x_{j_1}),\ldots,(x_{j_t})\}    \supseteq \mathrm{Ass}_R(R/I^{m+1})\cup\{ (x_{j_1}),\ldots,(x_{j_t})\},$$ one can promptly conclude that  $\mathfrak{q}\in  \mathrm{Ass}_R(R/I^{m})$. 
   This implies that $I$  has nearly copersistence property, and the proof is complete. 
\end{proof}


In the subsequent  lemma, we  prove  that  a monomial ideal has nearly  copersistence property if and only if its expansion (or weighted)  has nearly 
 copersistence property. 

\begin{lemma}\label{Lem.NCOP.Expansion+Weighting} 
Let  $I \subset R=K[x_1, \ldots, x_n]$ be a monomial ideal. Then the following statements hold.
\begin{itemize}
\item[(i)] $I$ has nearly copersistence property if and only if $I^*$ has  nearly copersistence property, 
where $I^*$ denotes the  expansion of $I$.  
\item[(ii)] $I$ has nearly copersistence property if and only if $I_W$ has nearly  copersistence property, 
where $I_W$ denotes the  weighted ideal.    
\end{itemize}
\end{lemma}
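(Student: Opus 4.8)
The plan is to reduce both biconditionals to the behaviour of the sets $\mathrm{Ass}_R(R/I^m)$ under the two operations and then to track carefully the auxiliary monomial prime appearing in the definition of nearly copersistence property. Part (ii) is essentially formal: by Lemma \ref{LEM. Weighted}(ii) one has $(I_W)^m=(I^m)_W$ for all $m\ge 1$, so Lemma \ref{ASS-Weighted} gives $\mathrm{Ass}_R(R/(I_W)^m)=\mathrm{Ass}_R(R/(I^m)_W)=\mathrm{Ass}_R(R/I^m)$; hence the sequences of associated primes of the powers of $I_W$ and of $I$ coincide term by term, and $I$ satisfies the defining inclusions of nearly copersistence property with a witness $(s,\mathfrak{p})$ if and only if $I_W$ does with the same $(s,\mathfrak{p})$. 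I expect no difficulty here.

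For part (i) I would first set up a dictionary. By Lemma \ref{Lem.Bayati.Expansion}(iii) we have $(I^*)^m=(I^m)^*$, so Proposition \ref{Pro.Bayati.Expansion} yields $\mathrm{Ass}_{R^*}(R^*/(I^*)^m)=\{\mathfrak{p}^*:\mathfrak{p}\in\mathrm{Ass}_R(R/I^m)\}$ for every $m\ge 1$. Since $\mathfrak{p}\mapsto\mathfrak{p}^*$ is injective on monomial primes, for any monomial prime $\mathfrak{p}$ of $R$ the inclusion $\mathrm{Ass}_R(R/I^{m+1})\subseteq\mathrm{Ass}_R(R/I^m)\cup\{\mathfrak{p}\}$ is equivalent to $\mathrm{Ass}_{R^*}(R^*/(I^*)^{m+1})\subseteq\mathrm{Ass}_{R^*}(R^*/(I^*)^m)\cup\{\mathfrak{p}^*\}$, and likewise with the auxiliary prime omitted. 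The forward implication of (i) then drops out: a witness $(s,\mathfrak{p})$ for $I$ produces the witness $(s,\mathfrak{p}^*)$ for $I^*$.

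For the converse of (i), assume $I^*$ has nearly copersistence property with witness $(s,\mathfrak{q})$, where $\mathfrak{q}$ is a monomial prime of $R^*$. The one point that needs care is that every member of every set $\mathrm{Ass}_{R^*}(R^*/(I^*)^m)$ is of the special form $\mathfrak{p}^*$, whereas a general monomial prime of $R^*$ need not be; this forces a dichotomy. Either $\mathfrak{q}=\mathfrak{p}_0^*$ for some monomial prime $\mathfrak{p}_0$ of $R$, and then the dictionary returns the witness $(s,\mathfrak{p}_0)$ for $I$; or $\mathfrak{q}$ is not of this form, in which case $\mathfrak{q}$ can never lie in $\mathrm{Ass}_{R^*}(R^*/(I^*)^{m+1})$, so adjoining it is vacuous for every $1\le m\le s$, giving $\mathrm{Ass}_{R^*}(R^*/(I^*)^m)\supseteq\mathrm{Ass}_{R^*}(R^*/(I^*)^{m+1})$; combined with the inclusions for $m\ge s+1$ this says $I^*$ has the copersistence property, whence so does $I$ by Theorem \ref{Th.Expansion}, and in particular $I$ has nearly copersistence property. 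The main obstacle --- really the only one --- is recognizing and disposing of this dichotomy, i.e.\ checking that a ``spurious'' auxiliary prime living in the expanded ring cannot actually be required, so that the nearly copersistence structure genuinely descends to $I$; the rest is bookkeeping with the cited results on expansion and weighting.
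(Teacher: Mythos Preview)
Your proposal is correct and follows essentially the same route as the paper: both parts are reduced to the bijection/identity between $\mathrm{Ass}(R/I^m)$ and $\mathrm{Ass}(R^*/(I^*)^m)$ (resp.\ $\mathrm{Ass}(R/(I_W)^m)$) furnished by Lemma~\ref{Lem.Bayati.Expansion}(iii) with Proposition~\ref{Pro.Bayati.Expansion} (resp.\ Lemmas~\ref{LEM. Weighted}(ii) and~\ref{ASS-Weighted}), and the witness prime is transported along this correspondence. The one place you are more careful than the paper is the converse of (i): the paper simply writes ``by a similar argument,'' whereas you explicitly handle the possibility that the witness prime $\mathfrak{q}$ for $I^*$ is not of the form $\mathfrak{p}_0^*$, observing that in that case $\mathfrak{q}$ is never needed and $I^*$ (hence $I$, via Theorem~\ref{Th.Expansion}) in fact has the copersistence property; this is a genuine point the paper's sketch glosses over, and your treatment of it is clean.
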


\begin{proof}
(i) We first assume that $I$ has nearly  copersistence property. Choose $\mathfrak{q}\in \mathrm{Ass}_{R^*}(R^*/(I^*)^{k+1})$, where $k\geq 1$. 
It follows from Lemma \ref{Lem.Bayati.Expansion}(iii) that   $(I^*)^{k+1}=(I^{k+1})^*$, and hence  $\mathfrak{q}\in \mathrm{Ass}_{R^*}(R^*/(I^{k+1})^*)$. 
From   Proposition \ref{Pro.Bayati.Expansion}, we get    $\mathfrak{p}_1\in \mathrm{Ass}_R(R/I^{k+1})$, where $\mathfrak{q}=\mathfrak{p}_1^*$. 
 Thanks to  $I$ has nearly  copersistence property, this gives that  there exist a positive integer $s$ and a monomial prime ideal  $\mathfrak{p}$ such that 
 $\mathrm{Ass}_R(R/I^{m}) \cup \{\mathfrak{p}\} \supseteq \mathrm{Ass}_R(R/I^{m+1})$ for all $1\leq m\leq s$, and  $\mathrm{Ass}_R(R/I^m) \supseteq \mathrm{Ass}_R(R/I^{m+1})$   for all $m \geq s+1$. Let $k \geq s+1$. Then  $\mathfrak{p}_1\in \mathrm{Ass}_R(R/I^{k})$. Now,  Proposition \ref{Pro.Bayati.Expansion} implies that    $\mathfrak{q}\in \mathrm{Ass}_{R^*}(R^*/(I^k)^*)$. Since  $(I^*)^{k}=(I^{k})^*$, this yields that  $\mathfrak{q}\in \mathrm{Ass}_{R^*}(R^*/(I^*)^k)$.   So, let $1\leq k\leq s$. If $\mathfrak{p}_1\in \mathrm{Ass}_R(R/I^{k})$, then  we can repeat the previous argument.  Hence, let $\mathfrak{p}_1=\mathfrak{p}$. Then 
  $\mathfrak{q}\in \mathrm{Ass}_{R^*}(R^*/(I^*)^k) \cup  \{\mathfrak{p}^*\}$. Therefore,  $I^*\subset R^*$ has nearly copersistence property.   
 By a similar argument,  we can establish the converse.

(ii) This statement can be shown by combining  together Lemmas  \ref{LEM. Weighted} and \ref{ASS-Weighted} and mimicking the proof of part (i). 
\end{proof}


As an application of  Lemma  \ref{Lem.NCOP.Expansion+Weighting}, we can provide the following proposition, which ensures that there exist infinitely many monomial ideals possessing  nearly copersistence property.

\begin{proposition} \label{Pro.Infinite.NCOP}
Let $R=K[x_1, \ldots, x_n]$ be  a polynomial ring in $n$  variables  with coefficients in a  field $K$. Then there exist infinitely many 
 monomial ideals  in $R$ such that  have  nearly  copersistence property.
\end{proposition}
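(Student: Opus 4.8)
The plan is to deduce this from a single well-chosen example by the operations already shown to preserve nearly copersistence property. The key point is Lemma \ref{Lem.NCOP.Expansion+Weighting}(ii): a monomial ideal $I$ has nearly copersistence property if and only if $I_W$ does, for \emph{every} weight $W$ (recall $W(x_i)\in\mathbb{N}$, so $W(x_i)\ge 1$). Hence it suffices to produce one monomial ideal $I_0\subset R=K[x_1,\ldots,x_n]$ with nearly copersistence property and then let $W$ range over all weights: each $(I_0)_W$ has nearly copersistence property, and as soon as some variable $x_j$ divides a generator of $I_0$ the ideals $(I_0)_{W}$ with $W(x_j)=1,2,3,\ldots$ and $W(x_i)=1$ otherwise are pairwise distinct (varying $W(x_j)$ changes a minimal generator). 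This already yields infinitely many monomial ideals in $R$ with nearly copersistence property.

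For the seed $I_0$ I would take the edge ideal of an odd cycle, which has nearly copersistence property by Corollary \ref{NCOP.Edge}. If $n\ge 3$, start from $I(C_3)=(x_1x_2,x_2x_3,x_3x_1)$, and when $n>3$ replace it by its expansion $I(C_3)^*$ with respect to the $3$-tuple $(n-2,1,1)$; by Lemma \ref{Lem.NCOP.Expansion+Weighting}(i) this $I(C_3)^*=\mathfrak{p}_1\mathfrak{p}_2+\mathfrak{p}_2\mathfrak{p}_3+\mathfrak{p}_1\mathfrak{p}_3$ lives in a polynomial ring in exactly $n$ variables (relabel them $x_1,\ldots,x_n$) and still has nearly copersistence property, so it serves as $I_0$. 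For the remaining cases $n\le 2$ one may simply take $I_0=(x_1^{a})$ for $a\in\mathbb{N}$: then $\mathrm{Ass}_R(R/(x_1^{a})^{k})=\{(x_1)\}$ for all $k\ge 1$, so $(x_1^{a})$ has the copersistence property, hence nearly copersistence property, and these already form an infinite family. Combining this with the reduction of the first paragraph settles every $n$.

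I do not expect a genuine obstacle: the proposition is essentially a bookkeeping consequence of the earlier machinery, the real content sitting in Corollary \ref{NCOP.Edge} (nearly copersistence property of odd-cycle edge ideals) and in Lemma \ref{Lem.NCOP.Expansion+Weighting} (invariance under expansion and weighting). The only two points needing a line of care are (a) choosing the expansion tuple so that $I(C_3)^*$ lands in a ring with exactly $n$ variables, and (b) verifying that the weighted family is honestly infinite, i.e.\ that the $(I_0)_W$ are pairwise non-equal, which is immediate once one exhibits an explicit variable occurring in $\mathcal{G}(I_0)$ (for instance $x_{11}\in\mathrm{supp}(I(C_3)^*)$). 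One could equally well use the cover ideal $J(C_{2k+1})$ of an odd cycle (via Proposition \ref{NKA.Pro} and Theorem \ref{NCOP.Almost}) or the ideal of Example \ref{Exam.NCOP.1} as the seed, expanding and weighting in exactly the same manner.
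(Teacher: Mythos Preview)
Your proposal is correct and follows essentially the same route as the paper: pick a seed ideal in three variables with nearly copersistence property, expand it into $R=K[x_1,\ldots,x_n]$ via Lemma~\ref{Lem.NCOP.Expansion+Weighting}(i), and then apply infinitely many weights via Lemma~\ref{Lem.NCOP.Expansion+Weighting}(ii). The only cosmetic difference is the justification of the seed: you invoke Corollary~\ref{NCOP.Edge} for the edge ideal $I(C_3)$, while the paper invokes Theorem~\ref{NCOP.Almost} for the cover ideal $J(C_3)$; since $I(C_3)=J(C_3)=(x_1x_2,x_2x_3,x_1x_3)$, these are the same ideal, and indeed you note this alternative yourself. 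Your treatment of $n\le 2$ via $(x_1^a)$ is slightly more explicit than the paper's, which merely names $(x_1)$ and $(x_1x_2)$ before passing to $n\ge 3$.
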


\begin{proof}
By virtue of  $(x_1) \subset R=K[x_1]$ and $(x_1x_2) \subset R=K[x_1,x_2]$ are normally torsion-free square-free monomial ideals, 
 they have nearly copersistence property. Hence, we can  assume   $n\geq 3$. Let  $L := (x_1x_2, x_2x_3, x_1x_3)$ in $S=K[x_1, x_2, x_3]$. Because  $L=(x_1, x_2) \cap (x_2, x_3) \cap (x_1,x_3)$, 
we can interpret $L$ as the cover ideal of the odd cycle $C_3$, which is an almost bipartite graph. It follows now from Theorem
    \ref{NCOP.Almost} that $L$ has nearly copersistence property.
  Now, let $\mathfrak{p}_1:=(x_{i_1}, \ldots, x_{i_a})$, $\mathfrak{p}_2:=(x_{i_{a+1}}, \ldots, x_{i_b})$, and $\mathfrak{p}_3:=(x_{i_{b+1}}, \ldots, x_{i_c})$ with  $\mathrm{supp}(\mathfrak{p}_i) \cap  \mathrm{supp}(\mathfrak{p}_j)=\emptyset$ for all $1\leq i<j \leq 3$ and $\bigcup_{i=1}^{3}\mathrm{supp}(\mathfrak{p}_i)=\{x_1, \ldots, x_n\}$. 
 We can rapidly conclude  from Theorem \ref{Lem.NCOP.Expansion+Weighting}(i) that  $L^*$ in $R=K[x_1, \ldots, x_n]$
 has nearly copersistence property. Suppose now that   $I:=(L^*)_W$ with $W(x_i)=\alpha_i$ such that $\alpha_i \geq 1$ for all $i=1, \ldots, n$. 
 In view of Theorem \ref{Lem.NCOP.Expansion+Weighting}(ii), we can derive that  $I$  has nearly copersistence property. Consequently,  there exist
  infinitely many such monomial ideals, as desired. 
  \end{proof}


We close this section with the following question: Let $I$ be a monomial ideal in $R=K[x_1, \ldots, x_n]$ such that 
$I=I_1R + I_2R$, where
$\mathcal{G}(I_1) \subset R_1=K[x_1, \ldots, x_m]$ and $\mathcal{G}(I_2) \subset R_2=K[x_{m+1}, \ldots, x_n]$ for some $m \geq 1$. 
 If $I_1$ and $I_2$ have nearly copersistence property,   can we deduce that $I$ has nearly  copersistence property? 
 We here present  a counterexample. In particular, this demonstrates that Proposition \ref{Summation} is not generally  valid for  nearly copersistence property.
 \begin{example}
 \em{
 Let $I=(x_1x_2, x_2x_3, x_3x_4, x_4x_5, x_5x_6, x_6x_7, x_7x_1, x_9x_8)$ be a monomial ideal in $R=K[x_1,x_2,x_3, x_4, x_5, x_6, x_7,x_8,x_9]$. 
 Then we can write $I=I_1+I_2$, where $I_1=(x_1x_2, x_2x_3, x_3x_4, x_4x_5, x_5x_6, x_6x_7, x_7x_1)$ and $I_2=(x_9x_8)$. 
 Since $I_1$ is the edge ideal of a cycle graph of order $7$, we can derive from Corollary \ref{NCOP.Edge} that $I_1$ has nearly copersistence property. 
 Besides, it follows from $\mathrm{Ass}(I_2^s)=\mathrm{Ass}(I_2)=\mathrm{Min}(I_2)$, for all $s\geq 1$, that $I_2$ has nearly copersistence property. 
 On the other hand, using  \textit{Macaulay2} \cite{GS}, we detect that $\mathrm{Ass}_R(R/I)=\mathrm{Ass}_R(R/I^2)=\mathrm{Ass}_R(R/I^3)$, but 
\begin{align*}
\mathrm{Ass}_R(R/I^4)= & \mathrm{Ass}_R(R/I^3) \cup \{(x_1,x_2,x_3,x_4,x_5,x_6,x_7,x_8),\\
& (x_1,x_2,x_3,x_4,x_5,x_6,x_7,x_9)\}.
\end{align*} 
 This shows   that $I$ does not have nearly copersistence property. 
 }
 \end{example}

\section{Conclusion and Outlook} 
In this paper we considered two classes of monomial ideals which have the copersistence property or nearly copersistence property, and  explored their properties.  
This study, in practice, serves as a starting point, raising several questions that could be further investigated. One of the most intriguing concepts might be a generalized version of  nearly copersistence property. Specifically, we define a monomial ideal $I$  to have  \textit{generalized nearly copersistence property}  if there exist a positive integer $s$ and monomial prime ideals  $\mathfrak{p}_1, \ldots, \mathfrak{p}_t$ such that 
 $\mathrm{Ass}_R(R/I^{m}) \cup \{\mathfrak{p}_1, \ldots, \mathfrak{p}_t\} \supseteq \mathrm{Ass}_R(R/I^{m+1})$ for all $1\leq m\leq s$, and  $\mathrm{Ass}_R(R/I^m) \supseteq \mathrm{Ass}_R(R/I^{m+1})$   for all $m \geq s+1$. To see a concrete  example, we consider the following monomial ideal 
  $$I=(x_1^6, x_1^5x_2, x_1x_2^5, x_2^6, x_1^4x_2^4x_3, x_1^4x_2^4x_4, x_1^4x_5^2x_6^3, x_2^4x_5^3x_6^2),$$ 
 in the polynomial ring $R=K[x_1, x_2, x_3, x_4, x_5, x_6]$ from \cite[page 549]{HH2}. Put  $\mathfrak{p}_1=(x_1, x_2, x_5, x_6)$ and  
 $\mathfrak{p}_2=(x_1, x_2, x_3, x_4, x_5,x_6)$.   Using  \textit{Macaulay2} \cite{GS}, we get the following 
  \begin{align*}
\mathrm{Ass}_R(R/I)=\{&(x_1,x_2), (x_1, x_2, x_5), (x_1, x_2, x_6), (x_1, x_2, x_3, x_4, x_5),\\
& (x_1, x_2, x_3, x_4, x_6), \mathfrak{p}_2\},
\end{align*}
 \begin{align*}
\mathrm{Ass}_R(R/I^2)=\{&(x_1,x_2), (x_1,x_2,x_5), (x_1, x_2, x_6), \mathfrak{p}_1,  (x_1, x_2, x_3, x_4, x_5), \\
& (x_1, x_2, x_3, x_4, x_6)\},  
\end{align*}
$$\mathrm{Ass}_R(R/I^3)=\{(x_1,x_2), (x_1,x_2,x_5), (x_1, x_2, x_6), \mathfrak{p}_1, \mathfrak{p}_2\},$$
 and 
 $$\mathrm{Ass}_R(R/I^4)=\{(x_1,x_2), (x_1,x_2,x_5), (x_1, x_2, x_6), \mathfrak{p}_1\}.$$
 Hence, we have 
 $\mathrm{Ass}_R(R/I^{m}) \cup \{\mathfrak{p}_1, \mathfrak{p}_2\} \supseteq \mathrm{Ass}_R(R/I^{m+1})$ for all $1\leq m\leq 2$, and  $\mathrm{Ass}_R(R/I^m) \supseteq \mathrm{Ass}_R(R/I^{m+1})$   for all $3\leq m \leq 10$. 
 Herzog and Hibi \cite{HH2}, in 2005, investigated the above example  from the depth function perspective. After that, in 2014, Bandari, Herzog, and Hibi presented an extended version of this example, see \cite[Theorem 1]{BHH}. Next, in 2024, Rissner and  Swanson generalized this result, refer to 
 \cite{RS} for more details. 
 
 Unfortunately, there is no information available in the literature about monomial ideals that satisfy generalized nearly copersistence property. Therefore, the authors hope that this paper will highlight and promote further research in this area.



\bigskip
\centerline{\bf  The conflict of interest and data availability statement}

\vspace{4mm}
We hereby declare that this manuscript has no associated data and that there is no conflict of interest regarding its content. 
 

\bigskip
\begin{center}
{\bf  ORCID}
  \vspace{3mm}
  \hspace{2cm}  \item[\textbf{ Mehrdad Nasernejad}]: ORCID: https://orcid.org/0000-0003-1073-1934
\hspace{2cm}   \item[\textbf{Jonathan Toledo}]: ORCID: https://orcid.org/0000-0003-3274-1367
 \end{center}




\end{document}